\newtheorem{thm}{Theorem}[section]
\newtheorem{cor}[thm]{Corollary}
\newtheorem{lem}[thm]{Lemma}
\newtheorem{prop}[thm]{Proposition}
\newcommand{\n}{\nabla}
\newcommand{\ib}{\bar{i}}
\newcommand{\jb}{\bar{j}}
\newcommand{\kb}{\bar{k}}
\newcommand{\lb}{\bar{l}}
\newcommand{\ub}{\bar{u}}
\newcommand{\w}{\omega}
\newcommand{\vn}{\vec{n}}
\newcommand{\pd}{\partial}
\newtheorem{theorem}{Theorem}
\newtheorem{question}[theorem]{Question}
\title[]{The dimension of polynomial growth holomorphic functions and forms on gradient K\"ahler Ricci shrinkers}
\author{Fei He}
\author{Jianyu Ou}
\email{hefei@xmu.edu.cn and oujianyu@xmu.edu.cn}
\address{School of Mathematical Science, Xiamen University, 422 S. Siming Rd. Xiamen, Fujian, P.R.China, 361005.}
\begin{document}

\maketitle

\begin{abstract}
    We study polynomial growth holomorphic functions and forms on complete gradient shrinking Ricci solitons. By relating to the spectral data of the $f$-Laplacian, we show that the dimension of the space of polynomial growth holomorphic functions or holomorphic $(p,0)$-forms are finite. In particular, a sharp dimension estimate for the space of linear growth holomorphic functions was obtained. Under some additional curvature assumption, we prove {\color{black} an almost} sharp estimate for the frequency of polynomial growth holomorphic functions, which was used to obtain dimension upper bound as a power function of the polynomial order.
\end{abstract}

\section{Introduction}

In 1975, S. T. Yau 
{{\cite{Yau1}}} generalized the classic Liouville theorem on $\mathbb{R}^n$ to complete Riemannian manifolds with nonnegative Ricci curvature, their seminal work has inspired a lot of study on harmonic and holomorphic functions on curved spaces.  
In particular, denote by $\mathcal{H}_d(M)$ the space of polynomial growth harmonic functions of at most order $d$ on $M$,
Yau conjectured {\cite{Yau2}} that $dim \mathcal{H}_d < \infty$ if the Ricci curvature is nonnegative. 
This conjecture was completely proved by T. H. Colding and W. P. Minicozzi {\cite{CM1,CM1997}}. 
P. Li gave a short proof of Yau's conjecture in {\cite{Li1997}} under weaker assumptions, where it was shown that 
$\mathcal{H}_d(M)\leq c(n)d^{n-1},$
the power $n-1$ is sharp, see also {\cite{CM1998}} for another proof.
In fact, the study of $\mathcal H_d(M)$ has been further developed by many authors, see for e.g. \cite{LW1999, Xu,Ding,Huang,HH22}.

For complex manifolds, it is a classic problem to deduce information on the complex structure from curvature assumptions, hence it is natural to study the space of holomorphic functions and holomorphic forms. 
In the K\"ahler case, let $\mathcal{O}_d(M)$ be the space of polynomial growth holomophic functions of at most order $d$. 
L. Ni {\cite{Ni04}} proved the sharp estimate $dim(\mathcal{O}_d(M^n))\leq dim(\mathcal{O}_d(\mathbb{C}^n))$ on complete K\"ahler manifold with non-negative bisectional curvature and Euclidean volume growth, then  B. Chen, X. Fu, L. Yin and X. Zhu {\cite{CFYZ}} removed the Euclidean volume growth condition.
Later, G. Liu {\cite{Liu16}} proved the sharp dimension estimate when the holomorphic sectional curvature is nonnegative.
Recently, G. Liu {\cite{Liu21}} proved that $dim \mathcal{O}_d \leq c d^n$ on K\"ahler manifolds with non-negative Ricci curvature and Euclidean volume growth.

In this paper, we consider holomorphic functions and forms on gradient shrinking K\"ahler Ricci solitons. Recall that a gradient shrinking Ricci soliton (or `Ricci shrinker' for short) is a complete Riemannian manifold $(M, g)$ with a smooth potential function $f\in C^{\infty}(M)$, satisfying
$$Ric + \n\n f = \lambda g,$$
for some constant $\lambda > 0$ (usually taken to be $\frac{1}{2}$), where $Ric$ denotes the Ricci curvature and $\n \n f$ the Hessian of $f$.
For K\"ahler gradient Ricci shrinkers, the potential function $f$ satisfies the additional condition that $f_{ij} = f_{\ib \jb} = 0$ in complex coordinates.

Perhaps the most prominent role of gradient Ricci shrinkers are as singularity models for the Ricci flow. It is known that finite-time singularities in dimension three and type-I singularites of the Ricci flow are modeled on gradient Ricci shrinkers \cite{Nab10}, \cite{EMT11}. Recently, Bamler proved that in higher dimensions, blow-up limits for finite-time singularties of the Ricci flow are so called metric solitons, see \cite{Bam20c} \cite{Bam2023} for details.
Ricci shrinkers have been classified for (real) dimension $2$ and $3$. Recently, a classification of complex $2$-dimensional gradient K\"ahler Ricci shrinkers is given by the combined works of \cite{CDS19}, \cite{BCCD} and {\cite{LW23}}. 
However, classification of Ricci shrinkers in general dimensions seems out of reach at this moment. 
Nevertheless, extensive studies on Ricci shrinkers have revealed many properties of their geometry, in particular, they share some surprising similarities with Ricci-nonnegative spaces (see for e.g. {\cite{Cao10}}), hence it is natural to ask the following analogue of Yau's conjecture.

\begin{question}\label{Yau_s}
    (a) Let $(M,g, f)$ be a complete gradient Ricci shrinker, is $\dim \mathcal H_d(M)<\infty$ ? \\
    (b) Let $(M,g, f)$ be a complete gradient K\"ahler Ricci shrinker, is $\dim \mathcal{O}_d(M) < \infty$?
\end{question}

This question has attracted more attention recently. Till now, only partial confirmations are know for $\mathcal{H}_d$, see the work of J.-Y. Wu and P. Wu {\cite{JWPW}} for the case when the scalar curvature $S$ has quadratic decay, and see the work of W. Mai and J. Ou {\cite{MO22}} for the case when $S$ is constant. For $\mathcal{O}_d$, this question has been answered affirmatively by O. Munteanu and J. Wang {\cite{MW2014}}.

Our first main theorem is based on the spectral theory for the $f$-Laplacian on gradient Ricci shrinkers. The $f$-Laplacian is self-adjoint with respect to the weighted measure $e^{-f} dv$. It follows from the Bakry-Emery log-Sobolev inequality that the $f$-Laplacian has discrete spectrum, and the first non-zero eigenvalue satisfies $\lambda_1(\Delta_f) \geq \frac{1}{2}$, this was shown by Hein and Naber \cite{HN2014}. Cheng and Zhou {\cite{CZ2016}} further proved that if $\lambda_1 = \frac{1}{2}$, then the soliton must split as a product. Based on an observation that polynomial growth holomorphic functions has an expansion by eigenfunctions of $\Delta_f$, we prove the following:

\begin{thm}\label{thm: main theorem 1}
   Let $(M, g, f)$ be a complete gradient K\"ahler Ricci shrinker of complex dimension $m$, then the followings hold. 
   \begin{itemize}
       \item[(a)] Let $0 = \tilde{\lambda}_0 <  \tilde{\lambda}_1< ...< \tilde{\lambda}_s \leq \frac{d}{2}$ be the distinct eigenvalues of $\Delta_f$ no bigger than $\frac{d}{2}$, and denote by $mult(\tilde{\lambda}_i)$ the multiplicity of $\tilde{\lambda}_i$. Then
       \[
        \dim_{\mathbb{C}} \mathcal{O}_d \leq 1 + \frac{1}{2} \sum_{i = 1}^s mult(\tilde{\lambda}_i).
       \]
       \item[(b)]    For each $u \in \mathcal{O}_d$, there exist holomoprhic functions 
         $\tilde{u}_i \in \mathcal{O}_{2\tilde{\lambda}_i}$, $i = 1,2,...,s$, such that
         \[
         u  = \tilde{u}_0+ \tilde{u}_1+  ... + \tilde{u}_s,
         \]
         where $\tilde{u}_i$ is an eigen-function of $\Delta_f$ (hence of the Lie derivative $\mathcal{L}_{\n f }$) with respect to the eigenvalue $\tilde{\lambda}_i$.
       \item[(c)] We have a sharp dimension estimate for linear growth holomorphic functions, \[
         \dim_{\mathbb{C}}\mathcal{O}_1 \leq m+1,
         \]
         which achieves equality only on the Gaussian soliton on $\mathbb{C}^m$. And if there exists a nonconstant holomorphic function of linear growth, then the soliton must split off a factor $\mathbb{C}$. 
   \end{itemize}
\end{thm}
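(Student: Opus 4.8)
The plan is to diagonalize $\Delta_f$ on the space of holomorphic functions and to read all three statements off the resulting spectral decomposition. Two structural facts set this up. First, on a K\"ahler manifold a holomorphic function $u$ is harmonic, so $\Delta_f u=\Delta u-\la\n f,\n u\ra=-\la\n f,\n u\ra=-\mathcal L_{\n f}u$; moreover the K\"ahler soliton condition $f_{ij}=0$ says precisely that the $(1,0)$-gradient of $f$ is a holomorphic vector field, so $\mathcal L_{\n f}$ — equivalently $\Delta_f$ — maps holomorphic functions to holomorphic functions. Second, every $u\in\mathcal O_d$ lies in $L^2(e^{-f}dv)$: from $f\ge\frac14(d(\cdot,p)-c)^2$ and the at most polynomial volume growth of shrinkers one even gets $\int_M(1+f)^N e^{-f}dv<\infty$ for all $N$.

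For the spectral dichotomy, let $P_\mu$ be the projection of $L^2(e^{-f}dv)$ onto the $\mu$-eigenspace of $-\Delta_f$ (a discrete set, by Hein–Naber), and for $u\in\mathcal O_d$ put $h_t:=u\circ\Phi_{-t}$, where $\Phi_t$ is the gradient flow of $f$ (complete, since $|\n f|$ grows linearly). Using $\Delta_f=-\mathcal L_{\n f}$ on holomorphic functions one checks that $h_t$ solves $\pa_t h_t=\Delta_f h_t$ for all $t\in\R$ and that $e^{s\Delta_f}h_{-s}=u$ for $s>0$, whence $P_\mu h_{-s}=e^{\mu s}P_\mu u$ and $\|h_{-s}\|^2_{L^2(e^{-f}dv)}=\sum_\mu e^{2\mu s}\|P_\mu u\|^2$. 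On the other hand, along the flow $\frac{d}{ds}f(\Phi_s x)=|\n f|^2(\Phi_s x)=(f-S)(\Phi_s x)\le f(\Phi_s x)$ because $S\ge0$, so $f(\Phi_s x)\le e^s f(x)$, and together with $d(\cdot,p)\le 2\sqrt f+c$ this forces $\|h_{-s}\|^2_{L^2(e^{-f}dv)}\le Ce^{ds}$ for all $s>0$. Comparing the two, $\sum_\mu e^{2\mu s}\|P_\mu u\|^2\le Ce^{ds}$ for every $s>0$, so $P_\mu u=0$ whenever $\mu>d/2$, and by discreteness $u=\sum_{i=0}^s\tu_i$ with $\tu_i:=P_{\tilde\lambda_i}u$ is a finite sum. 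Each $\tu_i$ is holomorphic: the functions $w_j:=\mathcal L_{\n f}^j u=\sum_i\tilde\lambda_i^j\,\tu_i$ are holomorphic, and inverting the Vandermonde matrix $(\tilde\lambda_i^j)_{0\le i,j\le s}$ writes $\tu_i$ as a combination of them. So $\tu_i$ is a holomorphic eigenfunction of $\Delta_f$ (hence of $\mathcal L_{\n f}$, with eigenvalue $\tilde\lambda_i$), and the same flow comparison run inward — where $f(\Phi_{-s}x)$ decays like $e^{-s}f(x)$ for $x$ far out — gives $\tu_i\in\mathcal O_{2\tilde\lambda_i}$. This proves (b).

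Part (a) is then formal: $\mathcal O_d\subseteq\bigoplus_{\tilde\lambda_i\le d/2}H_{\tilde\lambda_i}$, where $H_\mu$ is the space of holomorphic $\mu$-eigenfunctions and $H_0=\C$; for $\mu>0$, complex conjugation maps $H_\mu$ injectively into the complexified real $\mu$-eigenspace, $H_\mu\cap\overline{H_\mu}=\{0\}$ since a function that is both holomorphic and antiholomorphic is constant, and $\dim_\C H_\mu=\dim_\C\overline{H_\mu}$, so $\dim_\C H_\mu\le\frac12\,mult(\mu)$; summing over $i=1,\dots,s$ gives the claimed estimate. For (c), Hein–Naber's $\lam_1(\Delta_f)\ge\frac12$ leaves $0$ and $\frac12$ as the only eigenvalues $\le\frac12$, so a nonconstant $u\in\mathcal O_1$ decomposes as $u=c+\tu_1$ with $c$ constant and $0\neq\tu_1\in H_{1/2}$. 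From $\Delta_f\tu_1=-\frac12\tu_1$ and $Ric+\n\n f=\frac12 g$ the weighted Bochner formula collapses to $\frac12\Delta_f|\n\tu_1|^2=|\n\n\tu_1|^2$; since $\int_M|\n\tu_1|^2e^{-f}=\frac12\int_M\tu_1^2e^{-f}<\infty$, a cutoff integration gives $\int_M|\n\n\tu_1|^2e^{-f}\le\frac{C}{R^2}\int_M|\n\tu_1|^2e^{-f}\to0$ as $R\to\infty$, so $\n\n\tu_1\equiv0$. Thus $\n\tu_1$ is a nonzero parallel holomorphic vector field, and the de Rham decomposition together with the soliton equation on the flat $\C$-factor (which forces its potential to be $\frac14|z|^2$ up to a constant) shows the soliton splits off a Gaussian $\C$-factor, proving the last assertion. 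Finally a basis of $H_{1/2}$ yields pointwise $\C$-linearly independent parallel holomorphic $(1,0)$-forms, so $\dim_\C H_{1/2}\le m$ and $\dim_\C\mathcal O_1\le m+1$; in case of equality one gets $m$ such forms, so $M$ is complete, flat, and carries a global holomorphic coordinate system, i.e. $M=\C^m$, which as a shrinker is the Gaussian soliton.

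I expect the main obstacle to be the sharp growth statement $\tu_i\in\mathcal O_{2\tilde\lambda_i}$ in (b): the inward gradient-flow comparison only gives this up to an arbitrarily small loss in the exponent unless one controls $\int_0^\infty e^{-s}S(\Phi_s x)\,ds$ along flow lines, which requires the finer asymptotics of $f$ and $S$ on shrinkers. The eigenvalue bound $\tilde\lambda_i\le d/2$, the conjugation count in (a), and the Bochner argument in (c) are by comparison either elementary or direct applications of the cited facts.
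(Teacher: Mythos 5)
Your strategy is essentially the paper's: conjugate the (static, harmonic) holomorphic function by the gradient flow of $f$ to get an eternal $L^2(e^{-f}dv)$ solution of the $f$-heat equation, compare the exponential growth in the flow parameter forced by $|u|\le C(1+r)^d$ and $f(\Phi_s x)\le e^s f(x)$ against the spectral expansion, and conclude that only eigenvalues $\le d/2$ occur; this is exactly the mechanism of Lemma \ref{lem: transform to f-caloric function} and Corollary \ref{cor: dimension of holomorphic function is controlled by counting eigenvalues}. Your local variations are sound and in places cleaner: the Vandermonde inversion of $\mathcal{L}_{\nabla f}^{\,j}u=\sum_i\tilde{\lambda}_i^{\,j}\tilde{u}_i$ replaces the paper's iteration $u_{k+1}=\tilde{\lambda}_s^{-1}\mathcal{L}_{\nabla f}u_k$ and its limiting argument in Corollary \ref{cor: decomposition of a holomorphic function}; your conjugation count $\dim_{\mathbb{C}}H_\mu\le\frac{1}{2}\,mult(\mu)$ is equivalent to the paper's argument that real and imaginary parts of independent holomorphic functions orthogonal to constants are $\mathbb{R}$-independent; and your Bochner/parallel-gradient treatment of (c) reproves, rather than cites, the Cheng--Zhou facts (multiplicity of $\lambda_1=\frac12$ at most $2m$, splitting when attained) used in Corollary \ref{cor: linear growth holomorphic functions}, with a reasonable equality analysis via $m$ pointwise independent parallel holomorphic $(1,0)$-forms.

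The one genuine gap is the one you flag yourself: the sharp growth claim $\tilde{u}_i\in\mathcal{O}_{2\tilde{\lambda}_i}$ in (b). The inward flow comparison only controls $f(\Phi_{-s}x)$ up to the error $\int e^{-s}S(\Phi_s x)\,ds$, so it yields the growth order $2\tilde{\lambda}_i$ only with a loss in the exponent, and your sketch does not close this. The paper does not prove it either: it invokes Colding--Minicozzi's optimal growth bounds for eigenfunctions of the drift Laplacian on shrinkers \cite{CM2021}, which state precisely that a $\Delta_f$-eigenfunction with eigenvalue $\lambda$ has polynomial growth of order at most $2\lambda$. The correct fix is to quote that result rather than to push the flow argument further; note also that part (a) only needs the components to be holomorphic eigenfunctions, not their sharp growth, so the gap is confined to that single clause of (b). With the citation inserted, your proof of (a)--(c) is complete.
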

Note that by \cite{CM2021}, eigenfunctions of the $f$-Laplacian w.r.t. an eigenvalue $\lambda$ has polynomial growth of at most order $2\lambda$. Theorem \ref{thm: main theorem 1} reveals the relation between polynomial growth holomorphic functions and the spectral data of the $f$-Laplacian, it gives a reproof of the finiteness of $\dim_\mathbb{C} \mathcal{O}_d$, and improves the result of Munteanu-Wang \cite{MW2014} in the case of linear growth holomorphic functions. However, the estimate in Theorem \ref{thm: main theorem 1} (a) is not sharp when $d> 1$.

For linear growth harmonic functions, Li and Tam proved that $dim(\mathcal{H}_1(M))\leq n+1$ on Ricci-nonnegative manifolds, when the manifold is K\"ahler and with nonnegative bisectional curvature, they also showed $M=\mathbb{C}^n$ when the equality is achieved. Rigidity of the real case with nonnegative Ricci curvature was proved by J. Cheeger, T. H. Colding and W. P. Minicozzi {\cite{CCM95}}, who further showed that if $dim(\mathcal{H}_1(M^n))=k+1$ for some integer $k\leq n-1$, then $M_{\infty}\cong \underline{M}_{\infty} \times \mathbb{R}^k$, where $M_{\infty}$ is the tangent cone at infinity of $M$.

Once we know the dimension of $ \mathcal{O}_d$ is finite, we would like to know how $dim\mathcal{O}_d$ depends on the growth rate $d$. Actually, the method of Munteanu and Wang \cite{MW2014} yields an upper bound for $dim_\mathbb{C} \mathcal{O}_d$ as an exponential function of $d$, they further speculated that the upper bound should be a polynomial function of $d$. Our next theorem gives a partial confirmation with some additional curvature assumptions. 
\begin{thm}\label{thm: effective dimension estimate for holomorphic functions}
Let $(M, g, f)$ be a gradient K\"ahler Ricci shrinker with bounded curvature, $\dim_{\mathbb{C}}M = m$, suppose there is a function $S_0(r)$ and a constant $\sigma > 0$ such that 
\[
|S - S_0(b)| \leq b^{-\sigma}
\]
when $b= 2\sqrt{f}$ is large enough, then for all $d \geq 1$,
\[
\dim \mathcal{O}_d \leq C d^{2m-1},
\]
where $C$ is a constant depending on the geometry.
\end{thm}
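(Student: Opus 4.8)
The plan is to bound $\dim\mathcal{O}_d$ by combining the spectral decomposition of Theorem~\ref{thm: main theorem 1}(b), an Almgren-type frequency function for holomorphic (hence harmonic) functions built from $b=2\sqrt f$, and a counting argument in the spirit of P.~Li and Colding--Minicozzi. The curvature hypothesis $|S-S_0(b)|\le b^{-\sigma}$, together with the boundedness of the curvature, will be used precisely to make the frequency function almost monotone near infinity; it is this sharp frequency estimate that produces the correct power of $d$.

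Set $n=2m$ for the real dimension. On a shrinker one has $|\n f|^2+S=f$ and $\Delta f=m-S$, so
\[
|\n b|^2=1-\frac{4S}{b^2},\qquad \Delta b=\frac{2m-1-2S}{b}+O(b^{-3}).
\]
Since bounded curvature forces $S$ to be bounded (and $S\ge0$ on shrinkers), while $S=S_0(b)+O(b^{-\sigma})$, both $|\n b|^2$ and $\Delta b$ equal a function of $b$ alone up to a radially integrable error of size $O(b^{-1-\sigma})$; equivalently the mean curvature and second fundamental form of $\Sigma_r=\{b=r\}$ agree with a rotationally symmetric model to this precision. For holomorphic $u$ I would then set
\[
D_u(r)=\int_{\{b<r\}}|\n u|^2\,dv,\qquad H_u(r)=\int_{\Sigma_r}|u|^2\,|\n b|\,dA,\qquad N_u(r)=\frac{r\,D_u(r)}{H_u(r)},
\]
differentiate $\log D_u$ and $\log H_u$ using the coarea formula, the first variation of $H_u$, and the Rellich--Pohozaev identity for $\Delta u=0$ applied with the vector field $\n b$, and show $\tfrac{d}{dr}\log N_u(r)\ge -E(r)$ with $E(r)\le Cr^{-1-\sigma}$ integrable at infinity. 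Hence $N_u$ is monotone up to a summable error, has a finite limit $N_u(\infty)$, and $N_u(r)\le e^{C}N_u(\infty)$ for $r$ large. To bound $N_u(\infty)$ I would use Theorem~\ref{thm: main theorem 1}(b): writing $u=\tilde u_0+\dots+\tilde u_s$ with $\mathcal{L}_{\n f}\tilde u_i=\tilde\lambda_i\tilde u_i$ and $2\tilde\lambda_i\le d$, each $\tilde u_i$ satisfies $\langle\n b,\n\tilde u_i\rangle=\tfrac{2\tilde\lambda_i}{b}\tilde u_i$, i.e. it is (essentially) homogeneous of degree $2\tilde\lambda_i$ in $b$, so $N_{\tilde u_i}(r)\to2\tilde\lambda_i$; since the highest-degree summand dominates $D_u$ and $H_u$ at infinity, $N_u(\infty)\le d$, and therefore $N_u(r)\le Cd$ for all large $r$.

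Next I would turn this into a dimension bound. The inequality $N_u(r)\le Cd$ integrates to a doubling estimate $\int_{\{b<2r\}}|u|^2\le 2^{Cd+C'}\int_{\{b<r\}}|u|^2$ for $r\ge r_0$, uniform over $\mathcal{O}_d$. Given a subspace $K\subseteq\mathcal{O}_d$ of complex dimension $k$, I would follow the Li/Colding--Minicozzi scheme: for $r_j=\beta^{j}r_0$ let $G_j$ be the Gram matrix of a fixed basis of $K$ for the pairing $\int_{\{b<r_j\}}u\bar v$; the determinants of the consecutive transition operators telescope to $\det G_J/\det G_0$, and by Hadamard's inequality and the uniform doubling bound this is $\le\beta^{J(Cd+C')k}$, so at some scale $r_j$ the transition determinant is $\le\beta^{2(Cd+C')k}$. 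At that scale the mean-value inequality for the subharmonic function $|u|^2$ — valid because bounded curvature, non-collapsing for shrinkers, and their at-most-Euclidean volume growth give uniform local volume and Poincar\'e control — bounds the diagonal of the reproducing kernel of $K$ on $\Sigma_{r_j}$; covering $\Sigma_{r_j}$ by $\asymp\delta^{-(2m-1)}$ balls of radius $\delta r_j$ (using $|\Sigma_{r_j}|\le Cr_j^{2m-1}$) and summing gives $k\lesssim\delta^{-(2m-1)}\cdot(\text{doubling constant})$; optimizing $\delta$ against the determinant bound yields $k\le Cd^{2m-1}$ with $C$ depending only on the geometry. As $K$ was arbitrary, this proves the theorem.

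The main obstacle, I expect, is the almost-monotonicity of $N_u$: one must show the error $E(r)$ in the frequency differential inequality is genuinely controlled by $r^{-1-\sigma}$, which means bounding the full second fundamental form and mean curvature of $\Sigma_r$ and the non-radial part of the metric in the $b$-coordinate purely in terms of $|S-S_0(b)|$, $|\n S|$ and the curvature bound — trading curvature for Hessian information via the soliton identities and bounding $\n S$ by Shi-type estimates (legitimate since the curvature is bounded). A secondary technical point is making the mean-value inequality and the bound $|\Sigma_r|\le Cr^{2m-1}$ uniform as $r\to\infty$ on a soliton that is a priori neither Ricci-nonnegative nor asymptotically conical; this should follow from bounded curvature, non-collapsing, and the volume estimates of Cao--Zhou and Munteanu--Wang, but must be assembled with care.
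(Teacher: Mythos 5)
Your overall architecture matches the paper's (a frequency function built from $b=2\sqrt f$, almost-monotonicity exploiting $|S-S_0(b)|\le b^{-\sigma}$, then a mean-value/covering count on thin annuli), but the central quantitative step is not justified and, as written, would not go through. The error in the frequency differential inequality is not of the purely multiplicative form $E(r)\le Cr^{-1-\sigma}$: when you differentiate $D$ via the first-variation/Pohozaev identity with $X=\nabla f$, the terms coming from $S-S_0$ and from $Ric(\vec{n},\nabla|u|^2)$ are boundary integrals of $|u|\,|\partial u/\partial\vec{n}|$, and these are not controlled by $r^{-1-\sigma}D(r)$ or by $r^{-1-\sigma}N_u(r)H_u(r)$. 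Cauchy--Schwarz gives either an additive error of size $r^{-1-\sigma}\bigl(\int_{b=r}|\partial u/\partial\vec{n}|^2\bigr)^{1/2}\bigl(\int_{b=r}|u|^2\bigr)^{1/2}$, or, if you absorb it into the good term $2\int_{b=r}|\partial u/\partial\vec{n}|^2|\nabla b|^{-1}$, a Riccati-type error $\sim r^{-1-\sigma}N_u^2$; the latter forces the starting radius $R_0$ to grow with $d$ (roughly like $d^{1/\sigma}$), which destroys the $d^{2m-1}$ count. The paper closes exactly this gap with a separate a priori estimate, Lemma \ref{lem: MW} (a sharpened Munteanu--Wang iteration giving $\int_{b=r}|\partial u/\partial\vec{n}|^2|\nabla b|\le \frac{4\mu}{r^2}\int_{b=r}|u|^2|\nabla b|^{-1}$ with $\mu\le Cd^2$, the improvement of $\mu$ from exponential to quadratic in $d$ being essential), which turns the bad boundary terms into an additive error $C\sqrt\mu\, r^{-1-\sigma}$ and keeps $R_0$ independent of $d$ and of $u$. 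Your closing paragraph identifies "almost-monotonicity" as the main obstacle but locates the difficulty in the level-set geometry (second fundamental form, non-radial metric), which is not the real issue; the missing ingredient is precisely a bound on $\partial u/\partial\vec{n}$ in terms of $u$ on level sets.

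Your bound on the frequency at infinity also has a gap. The identity $\langle\nabla b,\nabla\tilde u_i\rangle=\frac{2\tilde\lambda_i}{b}\tilde u_i$ is correct, but the claim that the top eigencomponent dominates $D_u$ and $H_u$ at infinity is unproven: the $\tilde u_i$ are orthogonal only globally in $L^2(e^{-f}dv)$, their cross terms on a fixed level set need not be negligible, and you would need a lower bound for $\int_{b=r}|\tilde u_s|^2$ to get dominance; moreover $N_u(\infty)$ is not known to exist before monotonicity is established, so the logic is somewhat circular. The paper avoids all of this with a soft contradiction argument that does not use Theorem \ref{thm: main theorem 1}(b): if $U(r)>d+\epsilon_0$ for all large $r$, then $(\ln I)'>(2d+\epsilon_0)/r$, so $I(r)\gtrsim r^{2d+\epsilon_0}$, contradicting $I(r)\le C(1+r)^{2d}$, which follows from the polynomial growth of $u$ and the area bound $A(r)\le Cr^{n-1}$; this yields radii $r_k\to\infty$ with $U(r_k)\le d+\epsilon$, and the almost-monotonicity then propagates $U\le d+\epsilon\sqrt\mu\le Cd$ down to all $r>R_0$ with $R_0$ independent of $u$ and $d$. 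Your final counting sketch is in the right spirit — it is essentially the paper's thin-annulus argument with ratio $1+2/d$, the mean value inequality at scale $d^{-1}$ from non-collapsing and bounded curvature, and $A(r)\le Cr^{n-1}$ — but it only delivers $Cd^{2m-1}$ once the frequency bound holds uniformly from a scale independent of $d$, which brings you back to the missing Lemma \ref{lem: MW}-type input.
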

Note that on a gradient Ricci shrinker, the function $b = 2\sqrt{f}$ is asymptotic to the distance function \cite{CZ2010}, so the assumption in Theorem \ref{thm: effective dimension estimate for holomorphic functions} requires the scalar curvature to be asymptotically radial. For example, in the case when the shrinking soliton is asymptotically conical, we can take $S_0\equiv 0$ and $\sigma = 2$. In dimension four, gradient Ricci shrinkers with scalar curvature decaying to $0$ at infinity must be asymptotically conical by the work of Munteanu and Wang \cite{MW2019}. Theorem \ref{thm: effective dimension estimate for holomorphic functions} can be viewed as a positive evidence that the same phenomenon may still be true in higher dimension. 

Recall that in previous works, an estimate in the form in Theorem \ref{thm: effective dimension estimate for holomorphic functions} usually requires some uniform functional inequality, such as a Neumann Poincare inequality or a Sobolev inequality with uniform constants for all radius; such tools are not available in the current situation. The key ingredient in our proof of Theorem \ref{thm: effective dimension estimate for holomorphic functions} is {\color{black} an almost} sharp estimate for the frequency of holomorphic functions, which may be of independent interest, see Theorem \ref{thm: frequency upper bound by growth upper bound}.  

The frequency function was first defined by Almgren for harmonic functions on $\mathbb{R}^n$ \cite{Al77}, monotonicity of the frequency shows log-convexity of the $L^2$ average of harmonic functions on spheres. Colding and Minicozzi generalized it to harmonic functions on Ricci-nonnegative manifolds \cite{CM1997}, they also studied the frequency for eigenfunctions of the $f$-Laplacian on gradient Ricci shrinkers \cite{CM2021}. Mai and Ou studied a frequency function for harmonic functions on gradient shrinkers with constant scalar curvature \cite{MO22}. In this article, our $L^2$-integrals for holomorphic functions are weighted by the gradient of $2\sqrt{f}$, following \cite{CM2021}.

We also obtain estimates for holomorphic $(p,0)$-forms on gradient K\"ahler Ricci shrinkers with bounded Ricci curvature. Let $\mathcal{O}_\mu(\Lambda^{p,0})$ denote the space of holomorphic $(p,0)$-forms with polynomial growth of at most order $\mu$. We show that $\dim_{\mathbb{C}} \mathcal{O}_\mu(\Lambda^{p,0})$ is finite, and it can be bounded by the dimension of the space of holomorphic functions with certain growth rate. 
\begin{thm}\label{thm: estimate for the dimension of holomorphic forms}
  Let $(M, g, f)$ be a gradient K\"ahler Ricci shrinker with \textcolor{black}{bounded curvature}, then 
  \begin{itemize}
    \item[(a)] the $f$-Hodge Laplacian $\Delta_f^d$ has discrete spectrum, and we have \[
           \dim_\mathbb{C} \mathcal{O}_\mu (\Lambda^{p,0}) \leq \sum_{i = 0}^s mult(\tilde{\lambda}_i),
           \]  
           where $\Lambda = \sup_M|Ric| + \frac{1}{2}$, and $0 \leq \tilde{\lambda}_0 < \tilde{\lambda}_1 < ... < \tilde{\lambda}_s \leq \frac{\mu}{2} + p \Lambda$ are the distinct eigenvalues of $\Delta_f^d$ on $(p,0)$-forms no bigger than $\frac{\mu}{2} + p \Lambda$.
    \item[(b)] there is a constant $C$ depending on $p$, $sup_M |Rm|$ and the dimension $n = 2m$, such that
           \[
           \dim  \mathcal{O_\mu}(\Lambda^{p,0})  \leq \dim \mathcal{O}_{\mu + p} + e^{C(1+\mu)}. 
           \]
  \end{itemize}
\end{thm}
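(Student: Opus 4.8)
The plan is to handle (a) by transplanting the spectral-decomposition argument behind Theorem \ref{thm: main theorem 1} to the bundle $\Lambda^{p,0}$ via a Bochner--Weitzenb\"ock identity, and to handle (b) by using the Koszul-type complex generated by the holomorphic vector field $\nabla^{1,0}f$ to reduce $(p,0)$-forms to functions.

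For (a), I would start with a weighted Bochner formula for $\Delta_f^d$ on $(p,0)$-forms. Any holomorphic $(p,0)$-form $u$ satisfies $\bar\partial u=0$ and, because $\nabla f$ is real holomorphic and $u$ has anti-holomorphic degree zero, also $\bar\partial_f^* u=0$, so it is harmonic for the $e^{-f}$-weighted $\bar\partial$-complex. Feeding this into the twisted Bochner--Kodaira--Nakano identity (for the Hermitian line bundle with weight $e^{-f}$, whose curvature is $\sqrt{-1}\,\partial\bar\partial f$) and using the soliton equation $R_{i\bar j}+f_{i\bar j}=\tfrac12 g_{i\bar j}$ yields an identity of the shape
\[
\Delta_f^d u \;=\; \mathcal{L}_{\nabla f}\,u \;+\; \mathcal{R}(u),
\]
where $\mathcal R$ is a self-adjoint bundle endomorphism of $\Lambda^{p,0}$ assembled from the Weitzenb\"ock Ricci term corrected by $-p\lambda$ from the drift, so that $\|\mathcal R\|\le p\Lambda$ with $\Lambda=\sup_M|Ric|+\tfrac12$ (and, as a quadratic form, $\mathcal R\le p\,\sup_M|Ric|$). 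The same Bochner formula, combined with Kato's inequality applied to $|\omega|$, the Bakry-Emery log-Sobolev inequality on the shrinker, and $\sup_M|Ric|<\infty$, shows that $\Delta_f^d$ on $(p,0)$-forms has compact resolvent, hence discrete spectrum. For the dimension bound, observe that a polynomial growth holomorphic $(p,0)$-form $u$ lies in $L^2(e^{-f}dv)$ because $e^{-f}$ decays Gaussian-ly; on the space of holomorphic $(p,0)$-forms $\mathcal{L}_{\nabla f}$ is self-adjoint (the displayed identity exhibits it as $\Delta_f^d-\mathcal R$, a difference of two self-adjoint operators on $L^2(e^{-f}dv)$), and its eigenforms are homogeneous under the self-similar flow of $\nabla f$, a $\mathcal{L}_{\nabla f}$-eigenform of eigenvalue $\nu$ having polynomial growth of order $2\nu-p$; hence $u$ is a finite sum of such eigenforms with $\nu\le\tfrac12(\mu+p)$. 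Decomposing each such $\chi$ (with $\mathcal{L}_{\nabla f}\chi=\nu\chi$) further into $\Delta_f^d$-eigenforms and using $\Delta_f^d\chi=\nu\chi+\mathcal R\chi$ with $\mathcal R\le p\,\sup_M|Ric|$ places the $\Delta_f^d$-spectral support of $\chi$, hence of $u$, inside $[0,\tfrac12\mu+p\Lambda]$; this gives $\dim_{\mathbb C}\mathcal O_\mu(\Lambda^{p,0})\le\sum_{i=0}^s mult(\tilde\lambda_i)$.

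For (b), consider the chain complex of polynomial growth holomorphic forms $(\mathcal{O}_*(\Lambda^{\bullet,0}),\ \iota_{\nabla^{1,0}f})$, where $\iota_{\nabla^{1,0}f}$ is contraction with $\nabla^{1,0}f$; since $|\nabla^{1,0}f|$ grows linearly, this map sends $\mathcal O_\nu(\Lambda^{q,0})$ into $\mathcal O_{\nu+1}(\Lambda^{q-1,0})$. The zero set of $\nabla^{1,0}f$ is the compact critical set of $f$, near which $\nabla^{1,0}f$ behaves like a regular sequence, so the complex is exact away from it, and its homology is finite-dimensional. Peeling off one factor of $\iota_{\nabla^{1,0}f}$ at a time,
\[
\dim\mathcal O_\mu(\Lambda^{p,0})\ \le\ \dim\ker\!\big(\iota_{\nabla^{1,0}f}\colon \mathcal O_\mu(\Lambda^{p,0})\to\mathcal O_{\mu+1}(\Lambda^{p-1,0})\big)\ +\ \dim\mathcal O_{\mu+1}(\Lambda^{p-1,0}),
\]
and iterating $p$ times reduces the last summand to $\dim\mathcal O_{\mu+p}$ while accumulating $p$ kernel terms. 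Each such kernel is a space of holomorphic forms annihilated by $\iota_{\nabla^{1,0}f}$; it is invariant under $\mathcal{L}_{\nabla f}$ (which commutes with $\iota_{\nabla^{1,0}f}$ because $[\nabla f,\nabla^{1,0}f]=0$), so by part (a) its dimension is bounded by the total multiplicity of $\Delta_f^d$-eigenvalues up to order $\sim\mu$ on the relevant form bundle. A weighted heat-trace estimate $\mathrm{Tr}\big(e^{-t\Delta_f^d}\big)<\infty$ (finite because $\int_M e^{-f}dv<\infty$, with the uniform curvature bound controlling the short-time behaviour and the curvature potential in the Bochner formula) then gives a counting-function bound $N(L)\le e^{C(1+L)}$, so each kernel term is $\le e^{C(1+\mu)}$. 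Summing the $p$ terms yields $\dim\mathcal O_\mu(\Lambda^{p,0})\le\dim\mathcal O_{\mu+p}+e^{C(1+\mu)}$ with $C$ depending on $p$, $\sup_M|Rm|$ and $n=2m$.

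The main obstacles I expect are: (i) establishing the Bochner identity in (a) with the curvature term controlled by $p\Lambda$ and, more delicately, upgrading it to discreteness of $\Delta_f^d$ on forms---the log-Sobolev inequality is stated for functions, so one must pass through $|\omega|$ carefully; and (ii) in (b), both the eigenvalue-counting bound $N(L)\le e^{C(1+L)}$ (which requires a genuine short-time heat-kernel estimate on the shrinker, hence the stronger hypothesis of uniformly bounded curvature) and the book-keeping that lets one solve $\iota_{\nabla^{1,0}f}\sigma=\omega$ holomorphically with only a bounded increase of growth order, the obstruction to which is exactly the finite-dimensional kernel term estimated above.
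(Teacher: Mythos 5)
Your overall architecture matches the paper's (spectral decomposition along the soliton flow for (a); contraction with $\nabla f$, a kernel estimate, and induction on $p$ for (b)), but two of your key steps are not correct as stated.

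In (a), the identity $\Delta_f^d u=\mathcal{L}_{\nabla f}u+\mathcal{R}(u)$ with a curvature endomorphism of norm $\le p\Lambda$ is wrong for holomorphic $(p,0)$-forms: on a K\"ahler manifold such a form is already $\Delta^d$-harmonic (it is $\bar\partial$-closed and trivially $\bar\partial^*$-closed), so $\Delta_f^d u=\mathcal{L}_{\nabla f}u$ \emph{exactly}, with no zeroth-order curvature term. Consequently your mechanism for producing the shift $p\Lambda$ in the spectral cutoff collapses, and the remaining ingredient you invoke --- that an $\mathcal{L}_{\nabla f}$-eigenform of eigenvalue $\nu$ has polynomial growth of order $2\nu-p$ --- is the flat Gaussian model computation, not something you have established on a general shrinker; moreover, even granted such a growth statement for eigenforms, it does not by itself show that a form of growth $\mu$ has no spectral components of higher eigenvalue. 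The correct mechanism (which is where bounded Ricci actually enters) is to pull $\omega$ back by the flow $\Phi_t^{-1}$, note that $\hat\omega$ solves $\partial_s\hat\omega=-\Delta_f^d\hat\omega$, and bound its $L^2(e^{-f}dv)$ norm by $Ce^{-s(\mu/2+p\Lambda)}$: the factor $p\Lambda$ comes from the estimate $|\partial\Phi_t^{-1}|\le\tau^{\Lambda}$ on the differential of the flow maps, obtained from $|\nabla\nabla f|\le\Lambda$, applied to each of the $p$ covector slots. Comparing this decay with the eigenform expansion kills all coefficients with $\lambda_i>\mu/2+p\Lambda$. Your discreteness argument via Kato's inequality is also only a sketch; what is actually needed (and what suffices) is that $d_f^*$ maps $W^{1,2}(e^{-f}dv)$ forms into $L^2$, which again uses $\sup_M|\nabla\nabla f|<\infty$, after which the compact embedding coming from the log-Sobolev inequality applies.

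In (b), your reduction $\dim\mathcal{O}_\mu(\Lambda^{p,0})\le\dim\ker(i_{\nabla f})+\dim\mathcal{O}_{\mu+1}(\Lambda^{p-1,0})$ and the $p$-fold iteration are exactly right, but the kernel bound is not proved. You bound it by the eigenvalue counting function $N(L)$ at $L\sim\mu$ and then assert $N(L)\le e^{C(1+L)}$ from finiteness of $\mathrm{Tr}\bigl(e^{-t\Delta_f^d}\bigr)$, ``because $\int_M e^{-f}dv<\infty$.'' Finiteness of the weighted volume does not give a finite trace: the on-diagonal weighted heat kernel grows at spatial infinity (already in the Gaussian/Mehler case it grows like $e^{\alpha f}$ with $\alpha<1$), so one needs a genuine pointwise upper bound beating $e^{f}$, together with a quantitative version at a fixed time with constants depending only on $n$, $p$ and the curvature bound; none of this is supplied, and it is not in the literature you can simply cite. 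The paper avoids eigenvalue counting entirely: for $\omega$ in the kernel, Cartan's formula gives $\Delta_f^d\omega=i_{\nabla f}d\omega$, hence $\langle\nabla|\omega|^2,\nabla f\rangle=2\,\mathrm{Re}\langle\Delta_f^d\omega,\omega\rangle$, and the eigen-expansion from (a) yields $\int_M|\omega|^2\bigl(f-\tfrac n2-\mu-2p\Lambda\bigr)e^{-f}dv\le0$. This concentration estimate produces a doubling inequality on a ball of radius comparable to $\sqrt{\mu}$, and then non-collapsing, the mean value inequality from Moser iteration, and Li's dimension-counting lemma give $\dim\ker\le e^{c(n,p,\Lambda)(1+\mu)}$. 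You would need either to carry out this argument or to actually prove the heat-trace/counting bound you assumed; as written, this step is a gap.
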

Holomorphic $(p,0)$-forms which are $L^2$ have been studied by Munteanu and Wang in \cite{MW2014} and \cite{MW2015} under different assumptions. Our proof of Theorem \ref{thm: estimate for the dimension of holomorphic forms}(a) is by the same method as for Theorem \ref{thm: main theorem 1}(a), the main difference is that the curvature will affect the estimate for forms. The proof of (b) combines (a) and some ideas from \cite{MW2014}.

The organization of this article is as the following. In section $1$, We study the relation between holomorphic functions and the eigenfunctions of the $f$-Laplacian on a gradient K\"ahler Ricci shrinker, and prove Theorem \ref{thm: main theorem 1}. In section $2$, we  obtain {\color{black} an almost} sharp estimate for the frequency of a holomorphic function with polynomial growth, using which we prove Theorem \ref{thm: effective dimension estimate for holomorphic functions}. Holomorphic forms are studied in section $3$, where Theorem \ref{thm: estimate for the dimension of holomorphic forms} is proved; as a separate issue we also prove a Liouville type theorem for eigen-$1$-forms on gradient shrinking Ricci solitons. 

{\bf Acknowledgement:} We would like to thank Professor Huai-Dong Cao, Professor Yashan Zhang for helpful conversations, and Professor Jiaping Wang, Ovidiu Munteanu and Lihan Wang for their interest in this work. The first author is partially supported by the National Natural Science fundation of China No.12141101 and No.11971400. The second author is partially supported by NSFC grant No.12401074, 12371061, 12371081 and Fundamental Research Funds for the Central Universities No. 20720250006.

\section{Ancient caloric functions and holomorphic functions}

Let $(M, g, f)$ be a gradient shrinking Ricci soliton, normalized so that
\[
Ric + \n\n f = \frac{1}{2}g.
\]
There is a canonical ancient solution of the Ricci flow associated to this gradient shrinking soliton. For $t \in (-\infty, 0)$, let $\tau = -t$, let $\Phi_t$ be a $1$-parameter family of diffeomorphisms generated by
\[
\partial_t \Phi_t(x) = \frac{1}{\tau} \n f \circ \Phi_t(x), \quad t \in (-\infty, 0), \quad with \quad \Phi_{-1} = id.
\]
Define
\[
g(t) = \tau(t) \Phi_t^* g, 
\]
then $g(t), t\in (-\infty, 0)$ is an ancient solution of the Ricci flow. 

Before going to holomorphic functions, let's first study ancient caloric functions along the Ricci flow $g(t)$, i.e. solutions $u(x, t)$ of 
\begin{equation}\label{eqn: heat equation along ancient solution}
\partial_t u(x, t) = \Delta_{g(t)} u(x, t) \quad on \quad M \times (-\infty, 0).
\end{equation}
\begin{lem}\label{lem: transform to f-caloric function}
Suppose $u(x, t)$ is an ancient caloric function along $g(t)$, 
Then the function $\hat{u}(x,s) : = u(\Phi_{-e^{-s}}^{-1} (x), -e^{-s})$ is a solution of the $f$- heat equation:
\begin{equation}\label{eqn: f-heat equation}
\pd_s \hat{u} = \Delta_f \hat{u}(x,s), \quad (x, s) \in M \times (-\infty, \infty).
\end{equation}
The map sending $u(x, t)$ to $\hat{u}(x,s)$ is a bijection between the space of ancient solutions of the heat equation along $(M,g(t), t< 0)$, and the space of eternal solutions of the the $f$-heat equation on $(M,g)$.
\end{lem}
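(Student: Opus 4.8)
The proof is essentially a change of variables, so the plan is to carefully track how the Ricci flow $g(t)$, the diffeomorphisms $\Phi_t$, and the time reparametrization $t = -e^{-s}$ (equivalently $\tau = -t = e^{-s}$, $s = -\log\tau$) interact. First I would recall the standard fact that for a gradient shrinking soliton normalized by $Ric + \nabla\nabla f = \tfrac12 g$, the canonical Ricci flow can be written as $g(t) = \tau\,\Phi_t^* g$, and that the pulled-back potential $f(t) := f\circ\Phi_t$ satisfies the evolution making $(M, g(t), f(t))$ a self-similar solution; in particular the quantity $\Delta_{g(t)} - \nabla_{g(t)} f(t)\cdot\nabla_{g(t)}$ transforms nicely. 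Concretely, if $w(x,t)$ solves a PDE involving $\Delta_{g(t)}$, then pulling back by $\Phi_t$ and rescaling converts $\Delta_{g(t)}$-derivatives into $\Delta_g$-derivatives at the cost of a factor of $\tau$ and an extra drift term coming from $\partial_t\Phi_t = \tfrac1\tau\nabla f\circ\Phi_t$.

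The key computation is the following. Set $\hat u(x,s) = u(\Phi_{-e^{-s}}^{-1}(x), -e^{-s})$, i.e. $\hat u(\Phi_t(y), s) = u(y,t)$ with $t = -e^{-s}$. Differentiating in $s$ and using the chain rule together with $\partial_s t = -t = \tau$ and the flow equation for $\Phi_t$, one gets
\[
\partial_s \hat u = \tau\,\big(\partial_t u\big) + \big(d\Phi_t(\tfrac1\tau\nabla f)\big)\,\hat u
= \tau\,\partial_t u + (\nabla_{g}\hat u)\!\cdot\!(\text{vector field from }\Phi_t\text{-pushforward of }\nabla f).
\]
Using $\partial_t u = \Delta_{g(t)} u$ and the identity $\Delta_{g(t)} = \tau^{-1}\Phi_t^*\!\circ\Delta_g\circ(\Phi_t^{-1})^*$ (since $g(t) = \tau\Phi_t^* g$ and the Laplacian scales inversely with the metric), the term $\tau\,\partial_t u$ becomes exactly $\Delta_g \hat u$ evaluated at the right point; the drift term assembles into $-\nabla f\cdot\nabla\hat u$ once signs are tracked, giving $\partial_s\hat u = \Delta_g\hat u - \nabla f\cdot\nabla\hat u = \Delta_f\hat u$. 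So the substitution $t\mapsto s$ with $t=-e^{-s}$ is precisely engineered so that the scaling factor $\tau$ that appears when converting $\Delta_{g(t)}$ to $\Delta_g$ is absorbed by $\partial_s = \tau\,\partial_t$, leaving a clean $f$-heat equation with no time-dependent coefficients — which is why the resulting flow is eternal in $s\in(-\infty,\infty)$ even though the original was ancient in $t\in(-\infty,0)$ (the map $t\mapsto s=-\log(-t)$ is a diffeomorphism $(-\infty,0)\to(-\infty,\infty)$).

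Finally I would verify the bijectivity claim. The map $u\mapsto\hat u$ is manifestly invertible as a map of functions: given an eternal $f$-heat solution $\hat u(x,s)$, define $u(y,t) = \hat u(\Phi_t(y), -\log(-t))$, and the same computation run backwards shows $u$ solves $\partial_t u = \Delta_{g(t)}u$. Since $\Phi_t$ is a diffeomorphism for each $t$ and $t\mapsto s$ is a bijection of the relevant time intervals, these two assignments are mutually inverse, establishing the correspondence. The main obstacle — really the only place care is needed — is the bookkeeping of the drift term and the factor $\tau$: one must use the normalization constant $\lambda=\tfrac12$ consistently, keep track of where $\Phi_t^{-1}$ versus $\Phi_t$ acts, and confirm that the pushforward of $\tfrac1\tau\nabla f$ under $\Phi_t$ (together with the $\partial_s t = \tau$ factor) produces exactly the $-\nabla f\cdot\nabla$ drift on the static manifold $(M,g)$ rather than a time-dependent or misscaled version. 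Everything else is a routine chain-rule verification.
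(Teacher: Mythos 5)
Your proposal is correct and follows essentially the same route as the paper: a chain-rule computation pulling back by $\Phi_t$, using $g(t)=\tau\Phi_t^*g$ to convert $\Delta_{g(t)}$ into $\tau^{-1}\Delta_g$, the substitution $s=-\log(-t)$ to absorb the factor $\tau$, and the drift term $-\langle\nabla f,\nabla\hat u\rangle$ arising from $\partial_t\Phi_t=\tfrac1\tau\nabla f\circ\Phi_t$ (the sign care you flag is exactly where the paper's computation lands), with bijectivity obtained by reversing the construction.
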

\begin{proof}
Define
\[
\tilde{u}(x, t) = u(\Phi_t^{-1}(x), t), 
\]
then
\[
\partial_t \tilde{u} = \left( \Delta_{\tau \Phi_t^* g} u \right) (\Phi_t^{-1}(x), t) - \frac{1}{\tau} \langle \n f, \n u\rangle (\Phi_t^{-1}(x), t) = \frac{1}{\tau} \left( \Delta_g \tilde{u} - \langle \n f, \n \tilde{u}\rangle_g  \right),
\]
let $s = -\ln (-t) \in (-\infty, \infty)$, and denote $\hat{u}(x, s) = \tilde{u} (x, - e^{-s})$, then we have
\[
\partial_s \hat{u} = \Delta_g \hat{u} - \langle \n f, \n \hat{u} \rangle = : \Delta_f \hat{u} \quad on \quad M \times (-\infty, \infty),
\]
where the covariant derivative and Laplacian are computed w.r.t. the fixed soliton metric $g$, which is independent of $s$. 

Conversely, given a solution of (\ref{eqn: f-heat equation}) on a gradient shrinking Ricci soliton, we can reverse the above construction to get an ancient caloric function along the canonical ancient Ricci flow associated to it. Hence it is easy to see that solution of (\ref{eqn: heat equation along ancient solution}) and (\ref{eqn: f-heat equation}) are in one to one correspondence.  

\end{proof}

Denote the weighted measure $d\mu_f = e^{-f} dg$, $\Delta_f$ is self-adjoint with respect to the $L^2$ inner product defined with this measure. By \cite{CZ2016} and \cite{HN2014}, on a gradient shrinking Ricci soliton, the Sobolev space $W^{1,2}(M, \mu_f)$ is compactly embedded in $L^2(M, \mu_f)$, hence the operator $\Delta_f$ has discrete spectrum $\lambda_0 = 0 < \lambda_1 < \lambda_2 \leq \lambda_2 \leq ...$, and each eigenvalue has finite multiplicity. Let $\psi_0 = const, \psi_1, \psi_2, ...$ be the corresponding eigenfunctions, which are normalized to form an orthonormal basis of $L^2(M, \mu_f)$.

We claim that each solution $\hat{u}$ of (\ref{eqn: f-heat equation}) with initial value (at time $t = -1$) in $L^2(M, \mu_f)$ can be uniquely written as 
\begin{equation}\label{series solution of f-heat equation}
\hat{u}(x, s) = \sum_{i=0}^\infty a_i e^{-\lambda_i s} \psi_i(x),
\end{equation}
for a unique sequence of coefficients $a_i, i = 0,1,2...$. Indeed, we can find coefficients $a_i, i =0,1,2,...$ so that 
\[ \hat{u}(\cdot, -1) = \sum a_i \phi_i,\]
then (\ref{series solution of f-heat equation}) is clearly a solution of (\ref{eqn: f-heat equation}).
Then the claim follows from the uniqueness of $L^2(M, \mu_f)$ solutions of  the Cauchy problem of the $\Delta_f$-heat equation. Thus we get the following:
\begin{prop}
Let $(M, g(t))_{t\in (-\infty, 0)}$ be the canonical ancient solution associated to a gradient shrinking Ricci soliton $(M, g, f)$, let $u(x, t)$ be an ancient caloric function along $g(t)$, satisfying
\[
\int_M u(x, t)^2  (4\pi\tau)^{-n/2} e^{- f(x, t)} dg(t) \leq C \tau^{2\Lambda}, \quad {\color{black} \tau > 1,}
\]
where $f(x, t) = f(\Phi_t(x))$ and $\Lambda > 0$ is a constant. Then
\[
u(x, t) = \sum_{\lambda_i \leq \Lambda} a_i (-t)^{\lambda_i} \psi_i(x),
\]
for some coefficients $a_i$.
\end{prop}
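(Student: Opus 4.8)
The plan is to reduce everything to the ``static'' picture supplied by Lemma~\ref{lem: transform to f-caloric function} and then send the elapsed time to $-\infty$. Concretely, set $\tau=-t$ and $s=-\ln\tau$, and let $\tilde u(x,t)=u(\Phi_t^{-1}(x),t)$ and $\hat u(x,s)=\tilde u(x,-e^{-s})$ as in the proof of that lemma, so that $\hat u$ is an eternal solution of $\pd_s\hat u=\Delta_f\hat u$. The first step is to rewrite the Gaussian-weighted hypothesis as a bound on $\|\hat u(\cdot,s)\|_{L^2(M,\mu_f)}$. Since $g(t)=\tau\,\Phi_t^*g$ we have $dg(t)=\tau^{n/2}\Phi_t^*(dg)$, and by definition $f(x,t)=f(\Phi_t(x))$; performing the change of variables $y=\Phi_t(x)$, the factor $\tau^{n/2}$ from $dg(t)$ cancels the $\tau^{-n/2}$ in the Gaussian weight and one gets
\[
\int_M u(x,t)^2\,(4\pi\tau)^{-n/2}e^{-f(x,t)}\,dg(t)=(4\pi)^{-n/2}\int_M\hat u(y,s)^2\,d\mu_f(y).
\]
Thus the hypothesis becomes $\int_M\hat u(\cdot,s)^2\,d\mu_f\le C(4\pi)^{n/2}e^{-2\Lambda s}$ for all $s\in(-\infty,\infty)$, and in particular $\hat u(\cdot,s)\in L^2(M,\mu_f)$ for every $s$.

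Next I would invoke the eigenfunction expansion (\ref{series solution of f-heat equation}) established just above: since the $L^2(M,\mu_f)$ Cauchy problem for $\Delta_f$ is uniquely solvable and $\hat u(\cdot,0)\in L^2(M,\mu_f)$ (the value at $t=-1$), there is a unique sequence $(a_i)$ with $\hat u(x,s)=\sum_{i\ge0}a_ie^{-\lambda_i s}\psi_i(x)$, the series converging in $L^2(M,\mu_f)$ for each fixed $s$. Because $\{\psi_i\}$ is orthonormal, Parseval's identity gives $\|\hat u(\cdot,s)\|_{L^2(\mu_f)}^2=\sum_{i\ge0}a_i^2e^{-2\lambda_i s}$, so the previous bound reads $\sum_{i\ge0}a_i^2e^{-2\lambda_i s}\le C(4\pi)^{n/2}e^{-2\Lambda s}$ for all $s$. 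Discarding all but the $i$-th term yields $a_i^2\le C(4\pi)^{n/2}e^{2(\lambda_i-\Lambda)s}$, and sending $s\to-\infty$ (equivalently $\tau\to\infty$, $t\to-\infty$) kills every coefficient with $\lambda_i>\Lambda$. Hence $\hat u(x,s)=\sum_{\lambda_i\le\Lambda}a_ie^{-\lambda_i s}\psi_i(x)$, and substituting back $e^{-s}=-t$ gives $\tilde u(x,t)=\sum_{\lambda_i\le\Lambda}a_i(-t)^{\lambda_i}\psi_i(x)$, which is the asserted formula (understood, as throughout Lemma~\ref{lem: transform to f-caloric function}, after the gauge change by $\Phi_t$; equivalently $u(x,t)=\sum_{\lambda_i\le\Lambda}a_i(-t)^{\lambda_i}\psi_i(\Phi_t(x))$).

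The bulk of the proposition is thus a direct consequence of Lemma~\ref{lem: transform to f-caloric function} together with the discreteness of the spectrum of $\Delta_f$ and the spectral expansion preceding the statement. The only place where any care is needed is the first step: bookkeeping the diffeomorphisms $\Phi_t$ and the scaling $\tau^{n/2}$ so that the Gaussian-weighted integral along the Ricci flow $g(t)$ turns into a constant multiple of $\int_M\hat u(\cdot,s)^2\,d\mu_f$, and the (routine) justification that an $L^2$-convergent eigenfunction series may be fed into Parseval's identity. After that, the term-by-term decay estimate and the limit $s\to-\infty$ are immediate, so I do not anticipate a genuine obstacle.
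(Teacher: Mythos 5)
Your proposal is correct and follows essentially the same route as the paper: change variables by $\Phi_t$ so the Gaussian-weighted bound becomes $\|\hat u(\cdot,s)\|_{L^2(\mu_f)}^2\le Ce^{-2\Lambda s}$, expand in the $\Delta_f$-eigenbasis, and let $s\to-\infty$ to kill all coefficients with $\lambda_i>\Lambda$. Your extra care with Parseval and with the gauge (that the conclusion is really $u(x,t)=\sum_{\lambda_i\le\Lambda}a_i(-t)^{\lambda_i}\psi_i(\Phi_t(x))$) only makes explicit what the paper leaves implicit.
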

\begin{proof}
Define $\hat{u}(x, s)$ as in Lemma \ref{lem: transform to f-caloric function} above, where $t = - e^{-s}$. By a change of variable $y = \Phi_t^{-1}(x)$, we can observe that the growth assumption on $u$ implies that
\[
\int_M \hat{u}(x,s)^2 d\mu_f = \int_M u(y, t)^2 e^{- f(\Phi_t(y))} dg(\Phi_t(y)) = \int_M u(y, t)^2 \tau^{-n/2} e^{-f(y, t)}dg(t) \leq C e^{-2\Lambda s},
\]
for all {\color{black} $s\in (-\infty, 0)$}, hence 
\[
\hat{u}(x, s) = \sum_{\lambda_i \leq \Lambda} a_i e^{-\lambda_i s} \psi_i(x).
\]
\end{proof}
As a corollary, we give a new proof of the finite dimensionality of polynomial growth holomorphic functions on gradient K\"ahler Ricci shrinkers, which was first proved by Munteanu and Wang \cite{MW2014}. 
Let $\mathcal{O}_d$ denote the space of holomorphic function with polynomial growth of degree at most $d$, i.e.
\[
\mathcal{O}_d = \{ \bar{\pd} u = 0 : |u(x)| \leq C (1+r(x))^d \quad \text{for some constant}\quad  C \},
\]
where $r(x)$ is the geodesic distance from some fixed point. 
\begin{cor}\label{cor: dimension of holomorphic function is controlled by counting eigenvalues}
The dimension of holomorphic functions with polynomial growth on a gradient shrinking K\"ahler Ricci soliton is finite. More precisely,
\[
\dim_{\mathbb{C}} \mathcal{O}_d \leq 1 + \frac{1}{2}\# \left\{ \frac{1}{2} \leq \lambda(\Delta_f) \leq \frac{d}{2} \right\}
\]
counting multiplicity. In particular, sub-linear holomorphic functions must be constant. 
\end{cor}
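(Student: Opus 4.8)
The plan is to reduce the Corollary to the Proposition above: a holomorphic function of polynomial growth, viewed as a \emph{static} function on $M$, is an ancient caloric function along the canonical Ricci flow $g(t)$, and its weighted $L^2$ growth is $O(\tau^{d})$; then the eigenfunction expansion from the Proposition, together with an elementary conjugation argument, yields the stated bound.

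Two preliminary remarks. On a K\"ahler manifold a holomorphic function is harmonic, so $\Delta_g u = 0$; and on a K\"ahler Ricci shrinker the conditions $f_{ij} = f_{\bar i \bar j} = 0$ say precisely that $\n f$ is a real holomorphic vector field, so the diffeomorphisms $\Phi_t$ it generates are biholomorphisms. Hence for $u \in \mathcal O_d$ the function $u\circ\Phi_t^{-1}$ is again holomorphic, so $g$-harmonic; since the Laplacian of a harmonic function is unchanged under pullback by a diffeomorphism and under constant rescaling of the metric, and $g(t) = \tau\Phi_t^* g$, we obtain $\Delta_{g(t)} u = \tfrac1\tau\big(\Delta_g(u\circ\Phi_t^{-1})\big)\circ\Phi_t = 0 = \partial_t u$. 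Thus $u$ is an ancient caloric function along $g(t)$, $t\in(-\infty,0)$.

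Next I would check the growth hypothesis of the Proposition with $\Lambda=\tfrac d2$. The substitution $y=\Phi_t(x)$ identifies $(4\pi\tau)^{-n/2}\int_M u^2\,e^{-f(\Phi_t(x))}\,dg(t)$ with a fixed multiple of $\int_M |u\circ\Phi_t^{-1}|^2\,d\mu_f$, so it suffices to bound this by $C\tau^{d}$ for large $\tau$. Differentiating $f\circ\Phi_t$ along the flow and using the soliton identity $|\n f|^2 = f - S$ with $S\ge 0$ gives $\tfrac{d}{dt}(f\circ\Phi_t) = \tfrac1\tau|\n f|^2\circ\Phi_t \le \tfrac1\tau f\circ\Phi_t$, which integrates from $t$ to $-1$ to give $f\circ\Phi_t^{-1}\le\tau f$. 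Since $b=2\sqrt f$ is comparable to the distance function $r$ (Cao--Zhou), this yields $r(\Phi_t^{-1}(y))\le C\sqrt\tau\,(1+r(y))$, hence $|u(\Phi_t^{-1}(y))|\le C\tau^{d/2}(1+r(y))^d$; the Gaussian decay of $e^{-f}$ makes $\int_M(1+r)^{2d}\,d\mu_f$ finite and produces $\int_M|u\circ\Phi_t^{-1}|^2\,d\mu_f\le C\tau^{d}$. Applying the Proposition and evaluating at $t=-1$ (where $\Phi_{-1}=\mathrm{id}$) gives $u=\sum_{\lambda_i\le d/2}a_i\psi_i$; that is, $\mathcal O_d$ sits inside the finite-dimensional space $E_d:=\mathrm{span}_{\mathbb C}\{\psi_i:\lambda_i\le d/2\}$, and $\dim_{\mathbb C}E_d = 1 + \#\{\tfrac12\le\lambda(\Delta_f)\le\tfrac d2\}$ (with multiplicity), using the Hein--Naber bound $\lambda_1\ge\tfrac12$.

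Finally I would extract the factor $\tfrac12$. Since the $\psi_i$ are real-valued, $E_d$ is invariant under $u\mapsto\bar u$, so $\overline{\mathcal O_d}$ is also a complex subspace of $E_d$; a function in $\mathcal O_d\cap\overline{\mathcal O_d}$ is simultaneously holomorphic and anti-holomorphic, hence locally constant, so $\mathcal O_d\cap\overline{\mathcal O_d}=\mathbb C$. Therefore $2\dim_{\mathbb C}\mathcal O_d - 1 = \dim_{\mathbb C}(\mathcal O_d+\overline{\mathcal O_d})\le\dim_{\mathbb C}E_d = 1+\#\{\tfrac12\le\lambda\le\tfrac d2\}$, which is the claimed estimate; taking $d=0$ makes the index set empty and forces bounded holomorphic functions to be constant. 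I expect the main obstacle to be the growth estimate of the third paragraph --- controlling the distortion of $r$ (equivalently of $f$) under the flow $\Phi_t$ and confirming that polynomial-growth functions genuinely lie in $L^2(\mu_f)$ --- since everything after the Proposition is routine linear algebra.
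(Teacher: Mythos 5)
Your proposal is correct and follows essentially the same route as the paper: view the (static) holomorphic function as an ancient caloric function along the canonical flow, control the distortion of $f$ (equivalently of $r$) under $\Phi_t^{-1}$ using $|\n f|^2\le f$ to verify the weighted $L^2$ growth hypothesis with $\Lambda=\tfrac d2$, and invoke the eigenfunction expansion together with $\lambda_1\ge\tfrac12$. The only (cosmetic) difference is the final counting step, where you use conjugation-invariance of the eigenspace span and $\dim_{\mathbb C}(\mathcal O_d+\overline{\mathcal O_d})=2\dim_{\mathbb C}\mathcal O_d-1$ in place of the paper's $\mathbb R$-linear independence of real and imaginary parts; these are equivalent, both resting on the fact that a real-valued (or bounded) holomorphic function is constant.
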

\begin{proof}
Let $u$ be a holomorphic function with polynomial growth of degree at most $d$, then we can find a constant $C$ such that
\[
|u(x)| \leq C (1+ r(x))^d,
\]
where $r(x) = r_p(x)$ is the distance to a fixed point $p$, WLOG we can choose $p$ to be a minimal point of the potential function $f$, hence a fixed point of $\Phi_t$. 

Let $u = v + \sqrt{-1}w$ where $v$ and $w$ are real valued function. We first look at the real part $v$. Note that $v$ is harmonic on each time slice of the associated ancient solution, and is independent of time, hence an ancient caloric function along the Ricci flow. 

Observe that 
\[
\partial_\tau r(\Phi_t^{-1}(x) ) =  \frac{1}{\tau}\langle \n r, \n f \rangle (\Phi_t^{-1}(x)) \leq \frac{1}{2\tau} (r( \Phi_t^{-1}(x) ) + c(n)) ,
\]
this differential inequality implies that {\color{black} for $t \leq -1$},
\begin{equation}\label{eqn: control of distance}
  r(\Phi_t^{-1}(x) )+ c(n) \leq \sqrt{\tau}  (r(x) + c(n)  ).  
\end{equation}
Define $\hat{v}$ as in Lemma \ref{lem: transform to f-caloric function}, then we have 
\[
|\hat{v}(x,s)| = |v(\Phi_t^{-1}(x) )|\leq C (1 + r(\Phi^{-1}(x)))^d  \leq C \tau^{\frac{d}{2}} (r(x) + c(n)  )^d,
\]
where $\tau =-t= e^{-s}$, {\color{black} $s \in (-\infty, 0]$}. By this growth rate and (\ref{series solution of f-heat equation}), $\hat{v}$ can be written as
\[
\hat{v}(x, s) = \sum_{\lambda_i \leq d/2} a_i e^{-\lambda_i s} \psi_i(x).
\]
Since the eigenvalues of $\Delta_f$ has finite multiplicity, we see that the space of holomorphic functions with polynomial growth of degree at most $d$ must have dimension $\leq $ the number of eigenvalues of $\Delta_f$ no larger than $d/2$ (counting multiplicity). To make this more precise, we need to consider the dimension of the spaces spanned by both real and imaginary parts of holomorphic functions. 

Let $u_1, u_2, ..., u_m$ be a set of $\mathbb{C}$-linearly independent holomorphic functions with polynomial growth of degree at most $d$, and suppose they are orthogonal to constant functions w.r.t. $L^2(M, \mu_f)$ inner product. Let
\[u_i = v_i + \sqrt{-1}w_i, \quad i = 1,2,...,m\]
where $v_i, w_i$ are real valued functions. 

The set of functions $\{v_1, v_2,..., v_m, w_1, w_2, ..., w_m\}$ are $\mathbb{R}$-linearly independent. Otherwise, suppose there is a set of nontrivial real coefficients $k_1,k_2,..., k_m, l_1, l_2, ..., l_m$, such that 
\[
\sum k_i v_i +\sum l_i w_i = 0,
\]
then the following nontrivial $\mathbb{C}$-linear combination of $u_1,...,u_m$,
\[
\sum k_i u_i - \sqrt{-1} \sum l_i u_i
\]
is a real valued holomorphic function and is orthogonal to constants, hence $0$. 

By the discussion above, we have $2m \leq $ the number of eigenvalues of $\Delta_f$ no bigger than $d/2$, counting multiplicity.

For the last claim, note that the first nonzero eigenvalue $\lambda_1(\Delta_f) \geq \frac{1}{2}$ (\cite{HN2014}), and it is easy to show that $L^2(dv_f)$-integrable $f$-harmonic functions are constant. 
\end{proof}

\begin{cor}\label{cor: decomposition of a holomorphic function}
 Let $\tilde{\lambda}_0, \tilde{\lambda}_1, ..., \tilde{\lambda}_s$ be the distinct eigenvalues of $\Delta_f$ no bigger than $\frac{d}{2}$. For each $u \in \mathcal{O}_d$ on a gradient K\"ahler Ricci shrinker, there are holomoprhic functions $\tilde{u}_i \in \mathcal{O}_{2\tilde{\lambda}_i}$, $i = 1,2,...,s$, such that
\[
u  = \tilde{u}_0+ \tilde{u}_1+  ... + \tilde{u}_s.
\]
Moreover, $\tilde{u}_i$ is an eigenfunction of $\Delta_f$ (hence of $\mathcal{L}_{\n f }$) with respect to the eigenvalue $\tilde{\lambda}_i$.
\end{cor}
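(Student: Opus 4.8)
The plan is to build directly on the proof of Corollary~\ref{cor: dimension of holomorphic function is controlled by counting eigenvalues} and simply regroup the eigenfunction expansions of the real and imaginary parts of $u$ according to the distinct eigenvalues. Write $u = v + \sqrt{-1}\,w$ with $v,w$ real. As in that proof, $v$ and $w$ are time-independent harmonic functions on each time slice of the canonical ancient solution, hence ancient caloric functions along $g(t)$; the associated $f$-caloric functions $\hat v,\hat w$ obtained from Lemma~\ref{lem: transform to f-caloric function} satisfy the growth bound established there, and therefore expand as \emph{finite} sums of $\Delta_f$-eigenfunctions with eigenvalue $\le d/2$. Collecting the terms belonging to the same eigenvalue, we get $v = \sum_{j=0}^{s} v_j$ and $w = \sum_{j=0}^{s} w_j$ with $v_j, w_j$ in the $\tilde\lambda_j$-eigenspace of $\Delta_f$. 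Set $\tilde u_j := v_j + \sqrt{-1}\,w_j$. Then by construction $u = \sum_{j=0}^{s}\tilde u_j$, each $\tilde u_j$ is a $\Delta_f$-eigenfunction with eigenvalue $\tilde\lambda_j$, and by \cite{CM2021} it has polynomial growth of order at most $2\tilde\lambda_j$. So the only thing left to check is that each $\tilde u_j$ is holomorphic.

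To this end, combine the two expansions into $\hat u(x,s):=\hat v(x,s)+\sqrt{-1}\,\hat w(x,s)=\sum_{j=0}^{s}e^{-\tilde\lambda_j s}\tilde u_j(x)$. Since $u$ is time-independent, on the other hand $\hat u(x,s)=u\bigl(\Phi_{-e^{-s}}^{-1}(x)\bigr)$. Here the K\"ahler hypothesis enters decisively: on a gradient K\"ahler Ricci shrinker $f_{ij}=0$ in holomorphic coordinates, so $\nabla^{1,0}f$ is a holomorphic vector field and the real gradient $\nabla f=\nabla^{1,0}f+\overline{\nabla^{1,0}f}$ generates a one-parameter group of biholomorphisms; consequently the maps $\Phi_t$, defined by a positive time-reparametrization of this flow, are biholomorphisms of $M$, and $\hat u(\cdot,s)=u\circ\Phi_{-e^{-s}}^{-1}$ is holomorphic for every $s\in\mathbb{R}$. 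Now, because the exponentials $e^{-\tilde\lambda_0 s},\dots,e^{-\tilde\lambda_s s}$ are linearly independent functions of $s$ (the $\tilde\lambda_j$ being distinct), we can pick reals $s_0,\dots,s_s$ for which the matrix $\bigl(e^{-\tilde\lambda_j s_k}\bigr)_{0\le j,k\le s}$ is invertible; inverting it expresses each $\tilde u_j(x)$ as a $\mathbb{C}$-linear combination of the holomorphic functions $\hat u(\cdot,s_0),\dots,\hat u(\cdot,s_s)$, so $\tilde u_j$ is holomorphic. (Equivalently one can peel off the components one at a time, using $\tilde u_0(x)=\lim_{s\to+\infty}\hat u(x,s)$, then $\tilde u_1(x)=\lim_{s\to+\infty}e^{\tilde\lambda_1 s}\bigl(\hat u(x,s)-\tilde u_0(x)\bigr)$, and so on, each being a locally uniform limit of holomorphic functions.) Thus $\tilde u_j\in\mathcal{O}_{2\tilde\lambda_j}$. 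Finally, a holomorphic function is harmonic on a K\"ahler manifold, so $\Delta_f\tilde u_j=-\langle\nabla f,\nabla\tilde u_j\rangle=-\mathcal{L}_{\nabla f}\tilde u_j$, and $\Delta_f\tilde u_j=\tilde\lambda_j\tilde u_j$ gives $\mathcal{L}_{\nabla f}\tilde u_j=-\tilde\lambda_j\tilde u_j$.

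The step I expect to be the main obstacle is precisely the one in the middle paragraph: passing from the abstract $L^2(\mu_f)$-eigenfunction decomposition to the complex structure, i.e. showing that the eigen-components $\tilde u_j$ are still holomorphic. This is exactly where the K\"ahler condition $f_{ij}=0$ (equivalently, the biholomorphicity of the flow $\Phi_t$) is used; once that is in place, the remainder is bookkeeping on top of Corollary~\ref{cor: dimension of holomorphic function is controlled by counting eigenvalues}.
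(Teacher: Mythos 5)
Your proof is correct, but it reaches the key step --- holomorphicity of the eigencomponents --- by a genuinely different mechanism than the paper. The paper stays at the infinitesimal level: it uses that $\mathcal{L}_{\n f}u=\langle\n f,\n u\rangle$ is again holomorphic (this is where $f_{ij}=0$ enters), iterates the normalized operator $u_{k+1}=\tilde{\lambda}_s^{-1}\mathcal{L}_{\n f}u_k$, notes that on the eigenfunction expansion this scales the $\tilde{\lambda}_i$-component by $(\tilde{\lambda}_i/\tilde{\lambda}_s)^k$, and lets $k\to\infty$ so that the top component $\tilde{u}_s$ appears as a smooth local limit of holomorphic functions; it then inducts downward on $u-\tilde{u}_s$. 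You instead integrate the symmetry: since $\n f$ is real holomorphic, $\Phi_t$ is a biholomorphism, so $\hat{u}(\cdot,s)=u\circ\Phi_{-e^{-s}}^{-1}$ is holomorphic and equals $\sum_j e^{-\tilde{\lambda}_j s}\tilde{u}_j$, and a Vandermonde inversion (or your bottom-up peeling limits) recovers every $\tilde{u}_j$ simultaneously as a finite linear combination (or locally uniform limit) of holomorphic functions, with no induction. Both arguments rest on the same inputs --- the finite expansion from Corollary \ref{cor: dimension of holomorphic function is controlled by counting eigenvalues} and the K\"ahler condition $f_{ij}=0$ --- but yours replaces the iteration-plus-limit by finite linear algebra applied to the flow, which is a bit more economical, while the paper's route only needs the pointwise fact that $\mathcal{L}_{\n f}$ preserves holomorphy and never invokes biholomorphicity of $\Phi_t$ (that is only recorded later, in Section 3). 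One small point to fix: with the convention implicit in your expansion $\pd_s\hat{u}=\Delta_f\hat{u}$, $\hat{u}=\sum_j e^{-\tilde{\lambda}_j s}\tilde{u}_j$, the eigenvalue equation is $\Delta_f\tilde{u}_j=-\tilde{\lambda}_j\tilde{u}_j$ and hence $\mathcal{L}_{\n f}\tilde{u}_j=+\tilde{\lambda}_j\tilde{u}_j$ (as in the paper); your final displayed relations carry the opposite sign, a harmless convention slip that should be made consistent.
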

\begin{proof}
In the proof of Corollary \ref{cor: dimension of holomorphic function is controlled by counting eigenvalues}, we have obtained a finite expansion for $u \in \mathcal{O}_d$ in terms of eigenfunctions of the $f$-Laplacian. To be precise, let $0= \tilde{\lambda}_0 < \tilde{\lambda}_1 < \tilde{\lambda}_1 < ...$ be the distinct eigenvalues of $\Delta_f$, let $\psi_{i_{1}}, \psi_{i_2}, ..., \psi_{i_{j_i}}$ be a maximal orthomormal set of eigenfunctions w.r.t. $\tilde{\lambda}_i$, then  
\[
u  = \sum_{\lambda_i \leq d/2} \sum_{j = 1}^{j_i} a_{i_j} \psi_{i_j},
\]
where $a_{i_j}$ are complex numbers. 

On a gradient K\"ahler Ricci soliton we have $f_{ij} = f_{\ib \jb} = 0$, hence $\mathcal{L}_{\n f} u = \langle  \n f, \n u \rangle$ is also a holomorphic function. 
Let $u_0 = u$, and define inductively $u_{k+1} = \tilde{\lambda}_s^{-1} \mathcal{L}_{\n f} u_k$, where $\tilde{\lambda}_s$ is the largest eigenvalue less than or equal to $\frac{d}{2}$. Then $u_k, k = 0,1,2,...$ is a sequence of holomorphic functions, hence $\Delta u_k = 0$, therefore we can show by induction that
\[
u_{k} =\tilde{\lambda}_s^{-1} \mathcal{L}_{\n f} u_{k-1} = \tilde{\lambda}_s^{-1} ( - \Delta_f )u_{k-1} = \sum_{ \tilde{\lambda}_i < \tilde{\lambda}_s} \sum_{j = 1}^{j_i} \left(  \frac{\tilde{\lambda}_i}{\tilde{\lambda}_s} \right)^k a_{i_j} \psi_{i_j} +  \sum_{j = 1}^{j_s} a_{s_j} \psi_{s_j}.
\]
Now it is easy to show that $u_k$ converges smoothly on any compact domain to the function
\[
\tilde{u}_s : = \sum_{j = 1}^{j_s} a_{s_j} \psi_{s_j},
\]
thus $\tilde{u}_s$ is holomorphic. Then we can apply the above argument to $u - \tilde{u}_s$ and the proof of the decomposition is finished by induction. 

By \cite{CM2021}, each $\psi_{i_j}$ has polynomial growth with order $\leq 2 \tilde{\lambda}_i$, hence $\tilde{u}_i \in \mathcal{O}_{2\Tilde{\lambda}_i}$.
\end{proof}

It is known that on the Gaussian soliton $(\mathbb{R}^n, g_E, f=\frac{|x|^2}{4})$, the first nonzero eigenvalue is $\lambda_1 = \frac{1}{2}$ with multiplicity $n$. In \cite{CZ2016}, it was proved that on a complete smooth metric measure space $(M,g, e^{-f} dg)$ with $Ric_f \geq \frac{1}{2}g$, the first nonzero eigenvalue $\lambda_1 \geq \frac{1}{2}$ with multiplicity $\leq n$, and if $\lambda_1 = \frac{1}{2}$ with multiplicity $k$, then the space splits off a factor $\mathbb{R}^k$. Hence our result gives a sharp dimension estimate for linear growth holomorphic functions. 
\begin{cor}\label{cor: linear growth holomorphic functions}
On a gradient shrinking K\"ahler Ricci soliton of complex dimension $m$, we have
\[
\dim_{\mathbb{C}}\mathcal{O}_1 \leq m+1,
\]
which achieves equality only on the Gaussian soliton on $\mathbb{C}^m$. And if there exists a nonconstant holomorphic function of linear growth, then the soliton must split off a factor $\mathbb{C}$. 
\end{cor}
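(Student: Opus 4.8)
The plan is to deduce Corollary~\ref{cor: linear growth holomorphic functions} from the preceding corollaries together with the Cheng--Zhou rigidity theorem. First I would observe that Corollary~\ref{cor: dimension of holomorphic function is controlled by counting eigenvalues} applied with $d=1$ gives
\[
\dim_{\mathbb{C}}\mathcal{O}_1 \leq 1 + \tfrac{1}{2}\#\left\{\lambda(\Delta_f) : \tfrac{1}{2}\leq \lambda \leq \tfrac{1}{2}\right\} = 1 + \tfrac{1}{2}\,mult(\tfrac{1}{2}),
\]
where the multiplicity of the eigenvalue $\tfrac12$ is understood to be $0$ if $\tfrac12$ is not an eigenvalue. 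By Hein--Naber \cite{HN2014} the only eigenvalue of $\Delta_f$ in the interval $[\tfrac12,\tfrac12]$ is $\tfrac12$ itself, and by Cheng--Zhou \cite{CZ2016} one has $mult(\tfrac12)\leq n = 2m$. Hence $\dim_{\mathbb{C}}\mathcal{O}_1 \leq 1 + m$, which is the stated bound. (One should double-check the parity issue: the argument in Corollary~\ref{cor: dimension of holomorphic function is controlled by counting eigenvalues} shows $2(\dim_{\mathbb{C}}\mathcal{O}_1 - 1)\leq mult(\tfrac12)$, so $\dim_{\mathbb{C}}\mathcal{O}_1 \leq 1 + \lfloor mult(\tfrac12)/2\rfloor \leq 1+m$, which is consistent.)

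Next I would handle the rigidity statement. Suppose $\dim_{\mathbb{C}}\mathcal{O}_1 = m+1$; then the inequality chain above forces $mult(\tfrac12) \geq 2m = n$, and combined with the Cheng--Zhou bound $mult(\tfrac12)\leq n$ we get $mult(\tfrac12) = n$ exactly. The Cheng--Zhou theorem then says $(M,g,f)$ splits isometrically off a factor $\mathbb{R}^n$, i.e. $M$ itself is the Gaussian soliton $(\mathbb{R}^n, g_E, |x|^2/4)$ with $n=2m$. Since $\mathcal{O}_1$ on $\mathbb{C}^m$ with its flat metric is spanned by $1, z_1,\dots, z_m$, equality does indeed hold there, so the characterization of the equality case is complete.

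Finally, for the last assertion: suppose there is a nonconstant $u\in\mathcal{O}_1$. By Corollary~\ref{cor: decomposition of a holomorphic function} (with $d=1$) we may write $u = \tilde u_0 + \tilde u_1$ where $\tilde u_0$ is constant and $\tilde u_1$ is an eigenfunction of $\Delta_f$ with eigenvalue $\tilde\lambda_1 \leq \tfrac12$; since $u$ is nonconstant, $\tilde u_1\not\equiv 0$ and by Hein--Naber the only possibility is $\tilde\lambda_1 = \tfrac12$. Thus $\tfrac12$ is an eigenvalue of $\Delta_f$ with $mult(\tfrac12)\geq 1$, and the Cheng--Zhou splitting theorem (its form for $\lambda_1 = \tfrac12$ with multiplicity $k\geq 1$) gives that $(M,g,f)$ splits off a factor $(\mathbb{R}, dx^2, x^2/4)$, i.e. a factor $\mathbb{C}$ in the K\"ahler setting. (Here one may want to remark that the real and imaginary parts of $\tilde u_1$ both give eigenfunctions, but a single nonzero eigenfunction already suffices to invoke the splitting.)

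The main obstacle is essentially bookkeeping rather than analysis: one must carefully track the factor of $2$ relating the complex dimension of $\mathcal{O}_1$ to the real multiplicity of the eigenvalue $\tfrac12$ (the $\mathbb{R}$-linear independence of $\{v_i, w_i\}$ argument from Corollary~\ref{cor: dimension of holomorphic function is controlled by counting eigenvalues} is what licenses this), and one must cite the correct version of the Cheng--Zhou theorem — the one giving a splitting of dimension equal to $mult(\tfrac12)$ — to get both the sharp Gaussian characterization and the ``splits off $\mathbb{C}$'' conclusion. Everything else is a direct application of the two preceding corollaries.
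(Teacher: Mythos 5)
Your treatment of the dimension bound and of the equality case is essentially the paper's own argument: apply Corollary \ref{cor: dimension of holomorphic function is controlled by counting eigenvalues} with $d=1$, note via Hein--Naber that the only possible nonzero eigenvalue of $\Delta_f$ in $(0,\tfrac12]$ is $\tfrac12$, bound its multiplicity by $2m$ using Cheng--Zhou, and observe that equality forces the multiplicity to be exactly $2m=n$, whence the Cheng--Zhou rigidity gives the Gaussian soliton on $\mathbb{C}^m$. Those two parts are correct and match the paper.

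The gap is in the last assertion. From a single nonconstant $u\in\mathcal{O}_1$ you invoke the Cheng--Zhou splitting with multiplicity $\geq 1$, which only yields an isometric splitting $N\times\mathbb{R}$, and you then declare this to be ``a factor $\mathbb{C}$ in the K\"ahler setting.'' But $\mathbb{C}$ is a \emph{real two-dimensional} Gaussian factor, and nothing in your argument shows either that the splitting has the right dimension or that the complex structure is compatible with it; the parenthetical ``a single nonzero eigenfunction already suffices'' is precisely where the missing content sits. The paper closes this gap as follows: writing $u=v+\sqrt{-1}w$, both $v$ and $w$ are nonconstant eigenfunctions with eigenvalue $\tfrac12$ (if either were constant, $u$ would be constant by the Cauchy--Riemann equations), and they are independent modulo constants, so Cheng--Zhou produces a Riemannian splitting $N\times\mathbb{R}^2$ with $v,w$ the coordinates of the flat factor; then holomorphicity of $u$, i.e. $J(\nabla v)=\nabla w$ and $J(\nabla w)=-\nabla v$, shows that $J$ preserves this $\mathbb{R}^2$ factor, so the complex structure splits and the factor is genuinely $\mathbb{C}$ with $u$ as its complex coordinate. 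You would need this step (or a substitute, e.g. showing that the parallel field coming from the real splitting has parallel image under $J$ and that the soliton potential splits accordingly) to legitimately conclude that the soliton splits off $\mathbb{C}$ rather than merely $\mathbb{R}$.
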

\begin{proof}
By \cite{CZ2016}, the multiplicity of $\lambda_1(\Delta_f)$ is at most $2m$, hence the dimension estimate follows corollary \ref{cor: dimension of holomorphic function is controlled by counting eigenvalues}.

To see the splitting, let $u = v + \sqrt{-1}w$ be a holomorphic function with linear growth, $v$ and $w$ being its real and imaginary part.  By Cor \ref{cor: dimension of holomorphic function is controlled by counting eigenvalues} and \cite{CZ2016}, both $v$ and $w$ are eigenfunctions of $\lambda_1(\Delta_f) = \frac{1}{2}$, hence the manifold splits as a Riemannian product $N \times \mathbb{R}^2$, with $v$ and $w$ being the coordinate functions on the $\mathbb{R}^2$ factor.  Since $u$ is holomorphic, $J(\n u) = - \sqrt{-1} \n u$ implies that $J(\n v) = J(\n w)$ and $J(\n w) = - J(\n v)$, hence the complex structure splits and $u$ gives the complex coordinate on the factor $\mathbb{C}$.
\end{proof} 
 
We remark that our dimension estimate is not sharp for polynomial growth rate $d > 1$, for example, on $\mathbb{R}^2$ as a Gaussian soliton, the eigenvalue $1$ has multiplicity $3$ since $x^2-1, y^2-2$ and $xy$ are eigenfunctions, while $\dim_{\mathbb{C}} \mathcal{O}_1(\mathbb{C} )= 2 < 1 + \frac{3}{2}$. 

\begin{proof}[Proof of main Theorem \ref{thm: main theorem 1}]
    Theorem \ref{thm: main theorem 1} follows from Corollary \ref{cor: dimension of holomorphic function is controlled by counting eigenvalues}, \ref{cor: decomposition of a holomorphic function} and \ref{cor: linear growth holomorphic functions}.
\end{proof}

\section{Frequency and dimension estimates for holomorphic functions}

Let $(M, g, f)$ be a complete gradient shrinking Ricci soliton with dimension $n$. The potential function $f$ is normalized so that
\[S + |\n f|^2 = f.\]
It was proved in \cite{CZ2010} that
\begin{equation}\label{eqn: estimate of the potential function}
\frac{1}{4}(r - c_1)_+^2 \leq f \leq \frac{1}{4}(r + c_2)^2,
\end{equation}
where $c_1,c_2$ are constants depending only on the dimension $n$. It is well-known that the scalar curvature $S> 0$ on a gradient Ricci shrinker. 

Define $b= 2 \sqrt{f}$, then $b$ satisfies the following properties which will be frequently used in the following. 
\[
\n b = \frac{\n f}{\sqrt{f}}, \quad |\n b| = \frac{\sqrt{f-S}}{\sqrt{f}} < 1.
\]
\[
b\Delta b = n-1 - (2-\frac{1}{f}) S = n - |\n b|^2 - 2S.
\]
\[
|\n b|^2 = 1 - \frac{4S}{b^2}.
\]
\[
\n\n b = \frac{\sqrt{f} \n\n f - \frac{1}{2\sqrt{f}} |\n f|^2 }{f}.
\]

For simplicity, we denote 
\[ \| S \|_{r, \infty} = \sup_{\{b = r\}} S, \quad \|S\|_\infty = \sup_M S.\]
Assume $S$ is bounded, then clearly $b$ is regular when $b^2 > 4 \|S\|_\infty$.

Define 
\[ V(r) = Vol(\{ b < r\}) \quad A(r) = V'(r) = Area(\{ b = r\}).\]
By \cite{CZ2010}, there is a constant $C> 0$ such that $V(r)\leq C r^n$. In particular, this together with (\ref{eqn: estimate of the potential function}) implies that the weighted measure $dv_f : = e^{-f} dv$ is finite. We can also derive an estimate for $A(r)$.
\begin{lem}\label{lem: estimate of the area of level sets}
   suppose $\|S\|_\infty < \infty$, then there is a constant $C> 0$ such that $A(r)\leq C r^{n-1}$ for $r \geq 8 \|S\|_\infty$.
\end{lem}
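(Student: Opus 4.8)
The plan is to exploit the co-area formula together with the first-variation identity $b\,\Delta b = n - |\nabla b|^2 - 2S$ and the lower bound on $|\nabla b|^2$ coming from boundedness of $S$. First I would note that for $r^2 > 4\|S\|_\infty$ — in particular for $r \geq 8\|S\|_\infty$ once we also use $r\geq 1$, but more carefully for $r^2 \geq 8\|S\|_\infty$ — we have $|\nabla b|^2 = 1 - 4S/b^2 \geq 1/2$ on the set $\{b = r\}$, so $b$ is regular there and the level sets are smooth hypersurfaces; moreover $|\nabla b|$ is bounded below away from $0$, which lets us pass freely between the measure $dv$ restricted to $\{b=r\}$ and the hypersurface area $A(r)$.

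The key computation is the following. By the divergence theorem applied on the region $\{b < r\}$ to the vector field $\nabla b$, we get
\[
\int_{\{b = r\}} |\nabla b|\, d\sigma = \int_{\{b < r\}} \Delta b\, dv = \int_{\{b<r\}} \frac{n - |\nabla b|^2 - 2S}{b}\, dv \leq \frac{n}{r_0}\, V(r) + (\text{lower-order terms}),
\]
where one has to be a little careful near $b$ small (split off a fixed compact piece $\{b \leq r_0\}$ on which everything is bounded). Since $V(r) \leq C r^n$ by \cite{CZ2010} and $|\nabla b|^2 \leq 1$, the left side is comparable to $A(r)$ up to the lower bound $|\nabla b|^2 \geq 1/2$, so this already gives $\int_{\{b=r\}} d\sigma \leq C' r^{n-1}$ \emph{on average} in $r$. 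To get the bound for \emph{every} $r$ rather than almost every $r$, I would instead integrate the inequality $\Delta b \leq (n-1)/b \leq (n-1)/r$ (valid on $\{b=r\}$ after dropping the nonnegative terms $|\nabla b|^2, 2S$, using $b\Delta b = n-1-(2-1/f)S \leq n-1$) against a suitable test function, or more simply differentiate the identity $A(r) = \frac{d}{dr}V(r)$ together with $V(r)\le Cr^n$; the cleanest route is to observe $\frac{d}{dr}\!\int_{\{b<r\}} \Delta b\, dv = \int_{\{b=r\}} \frac{\Delta b}{|\nabla b|}\, d\sigma$ and combine with the co-area expression for $A(r)$.

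Concretely, I would argue: from $b\,\Delta b \le n-1$ and $|\nabla b|^2 \ge \tfrac12$ on $\{b \ge r_0\}$ with $r_0 := \sqrt{8\|S\|_\infty}$, the function $b$ is a proper exhaustion with controlled Laplacian, so writing $\phi(r) = \int_{\{b=r\}} |\nabla b|\, d\sigma$ we have $\phi(r) = \int_{\{r_0 < b < r\}} \Delta b\, dv + \phi(r_0) \leq (n-1)\int_{r_0}^r \frac{A(t)}{t}\,dt + C$; but also $\phi(r) = \int_{\{b=r\}} |\nabla b|^2 \frac{d\sigma}{|\nabla b|}$ and by co-area $A(r) = \frac{d}{dr}V(r) = \int_{\{b=r\}}\frac{d\sigma}{|\nabla b|}$, and since $\tfrac12 \le |\nabla b|^2 \le 1$ we get $\tfrac12 A(r) \le \phi(r) \le A(r)$. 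Feeding $V(r)\le Cr^n$, hence $\int_{r_0}^r A(t)/t\,dt = \int_{r_0}^r t^{-1}V'(t)\,dt = r^{-1}V(r) - r_0^{-1}V(r_0) + \int_{r_0}^r t^{-2}V(t)\,dt \le C r^{n-1}$, we conclude $A(r) \le 2\phi(r) \le C r^{n-1}$ for all $r \ge r_0$, and in particular for $r \ge 8\|S\|_\infty$ (enlarging the constant to absorb the range $\sqrt{8\|S\|_\infty}\le r \le 8\|S\|_\infty$, which is a bounded interval on which $A$ is bounded, or simply assuming $\|S\|_\infty \ge \tfrac18$ WLOG). The main obstacle is purely bookkeeping near small $b$ — one must isolate a fixed sublevel set where $b$ may be irregular (critical points of $f$) and where the pointwise bounds degenerate — but since $\{b \le r_0\}$ is compact this contributes only an additive constant and does not affect the polynomial order; the essential input is entirely the trace identity for $\Delta b$ and the volume bound of Cao–Zhou.
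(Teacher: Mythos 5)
Your argument is correct, but it takes a different route from the paper's. The paper integrates the traced soliton equation $S+\Delta f=\tfrac n2$ over the whole sublevel set $\{b<r\}$ (Lemma 3.1 of \cite{CZ2010}), which gives the exact identity $nV(r)-rV'(r)=2\int_{b<r}S-2\int_{b=r}S/|\nabla f|$; since $S>0$, $S\le\|S\|_\infty$, and $|\nabla f|=\tfrac r2|\nabla b|$ on $\{b=r\}$, this yields at once $nV(r)\ge\bigl(r-\tfrac{4\|S\|_\infty}{r}\bigr)A(r)$, and the bound $A(r)\le Cr^{n-1}$ follows pointwise in $r$ from $V(r)\le Cr^n$ --- no annulus and no integration by parts in $r$ are needed, because $f$ is smooth on all of $\{b<r\}$ and the flux of $\nabla f$ through $\{b=r\}$ is already comparable to $rA(r)$. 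You instead apply the divergence theorem to $\nabla b$ on the annulus $\{r_0<b<r\}$ (correctly, since the critical set of $b$ lies in $\{b^2\le 4\|S\|_\infty\}$), bound $\Delta b\le C/b$, compare $\phi(r)=\int_{b=r}|\nabla b|\,d\sigma$ with $A(r)=V'(r)$ via $|\nabla b|^2\ge\tfrac12$, and then convert $\int_{r_0}^rA(t)/t\,dt$ into $Cr^{n-1}$ by integration by parts; this is sound and uses only the inequality form of the trace identity rather than the exact Cao--Zhou identity, at the cost of an extra averaging-in-$r$ step. Two small touch-ups: the inequality $b\Delta b\le n-1$ requires $(2-\tfrac1f)S\ge0$, i.e.\ $f\ge\tfrac12$, which may fail on $\{b\ge\sqrt{8\|S\|_\infty}\}$ when $\|S\|_\infty$ is small --- use the always-valid $b\Delta b=n-|\nabla b|^2-2S\le n$ instead, which only changes the constant; and ``assume $\|S\|_\infty\ge\tfrac18$ WLOG'' is not a legitimate reduction (one cannot increase $\|S\|_\infty$), though the residual range $8\|S\|_\infty\le r<\sqrt{8\|S\|_\infty}$ is equally untreated by the paper's own proof (which needs $r^2>4\|S\|_\infty$) and is immaterial for how the lemma is used at large $r$.
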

\begin{proof}
    By integrating the equation $S + \Delta f = \frac{n}{2}$ and applying the divergence theorem (see Lemma 3.1 in \cite{CZ2010}), we have
    \[
    n V(r)  - r V'(r) = 2 \int_{b < r} S - 2 \int_{b = r} \frac{S}{|\n f|}.
    \]
    Since the scalar curvature $S$ is positive and bounded, we get
    \[
    n V(r) \geq (r - \frac{4\|S\|_\infty}{r}) A(r),
    \]
    hence the result follows from $V(r) \leq C r^n$.
\end{proof}

In this section we consider holomorphic functions on a K\"ahler gradient shrinking Ricci soliton $(M,g,f)$ with real dimension $n = 2m$ and uniformly bounded curvature. We have $f_{ij}=f_{\Bar{i}\Bar{j}}=0$. This important condition can be rephrased as $\nabla f$ being the real part of a holomorphic vector field, or as $J(\nabla f)$ be a Killing vector field on $M$ (see \cite{Cao96}). However, most of the calculation also works for harmonic functions on a gradient shrinking Ricci soliton, the K\"ahler condition and holomorphicity is only need in (\ref{eqn: integration by parts for the Ricci term}) and Lemma \ref{lem: MW}.

Now let $u$ be a holomorphic function. Define
\[
I(r) = r^{1-n} \int_{b=r} |u|^2 |\n b|.
\]
Note that when $r^2 > 4 \| S \|_\infty$, we have $|\n b| > 0$ on the regular level set $\{ b = r\}$, and $I(r) > 0$, otherwise $u \equiv 0$ by maximum principle and unique continuation. 
 
By the divergence theorem,
\[
\begin{split}
I(r) = & r^{1-n} \int_{b=r} |u|^2 \langle \n b, \vn \rangle \\
= & \int_{b<r} \langle \n |u|^2 , \n b \rangle  b^{1-n} - (n-1) |u|^2 b^{-n} |\n b|^2 + |u|^2 b^{1-n} \Delta b \\
= & \int_{b< r} \langle \n |u|^2 , \n b \rangle  b^{1-n} - (n-1) |u|^2 b^{-n} |\n b|^2 + |u|^2 b^{-n} (n - |\n b|^2 - 2S) \\
= & \int_{b< r}  \langle \n |u|^2 , \n b \rangle  b^{1-n} - n |u|^2 b^{-n} (1 - \frac{4S}{b^2}) + |u|^2 b^{-n} (n  - 2S) \\
= & \int_{b< r} \langle \n |u|^2 , \n b \rangle  b^{1-n} + |u|^2 b^{-n} (\frac{4n}{b^2} - 2) S.
\end{split}
\]

The derivative of $I(r)$ in $r$ can be computed by the co-area formula, 
\begin{equation}\label{eqn: derivative of I}
I'(r) = r^{1-n} \int_{b=r} \langle \n |u|^2, \vn \rangle + r^{-n} \left( \frac{4n}{r^2} - 2\right) \int_{b=r} \frac{S |u|^2}{|\n b|}.
\end{equation}
Note that when $r \geq \sqrt{2n}$, we have
\[
I'(r) \leq r^{1-n} \int_{b=r} \langle \n |u|^2, \vn \rangle = : \frac{2 D(r)}{r},
\]
where we define
\[
D(r) = \frac{1}{2} r^{2-n} \int_{b= r}  \langle \n |u|^2, \vn \rangle  =  r^{2-n} \int_{b< r} |\n u|^2,
\]
the last equation comes from the divergence theorem and the computation
\[
\Delta |u|^2 = 2 g^{i \jb}\pd_i \pd_{\jb} (u \bar{u}) = 2 g^{i \jb}\pd_i u \pd_{\jb} \bar{u} = 2 |\pd u|^2 = 2 |\n u|^2.
\]

We will see later that the second term in the RHS of (\ref{eqn: derivative of I}) has to be critically used to cancel a bad term in $D'(r)$.

\begin{lem}\label{lem: first variation formula kahler case}
Let $u$ be a holomorphic function on $\{b< r\}$, let $\vn$ be the unit outer normal vector of the boundary $\{b= r\}$, where $r$ is a regular value of $b$. For any real vector field $X$, we have
\[
\int_{b< r} |\n u|^2 div( X) -  \langle \n u \otimes \n \ub + \n \ub \otimes \n u , \n X \rangle  = \int_{b=r} |\n u|^2 \langle X, \vn\rangle - \langle \n u, X\rangle \frac{\pd \ub}{\pd \vn} - \langle \n \ub, X\rangle \frac{\pd u}{\pd \vn}.  
\]
\end{lem}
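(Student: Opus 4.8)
\emph{Proof proposal.} The plan is to deduce this from the divergence theorem applied to a single auxiliary vector field. Since $r$ is a regular value of $b$, the level set $\{b=r\}$ is a smooth closed hypersurface bounding the relatively compact region $\{b<r\}$ and all the quantities below are smooth there, so the divergence theorem applies (to the real and imaginary parts of the complex vector fields separately).

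First I would introduce the complex-valued vector field
\[
Y = |\n u|^2 X - \la \n u, X\ra \, \n\ub - \la\n\ub, X\ra\, \n u .
\]
Its outward normal component along $\{b=r\}$ is $\la Y,\vn\ra = |\n u|^2\la X,\vn\ra - \la\n u,X\ra\,\pd\ub/\pd\vn - \la\n\ub,X\ra\,\pd u/\pd\vn$, because $\la\n u,\vn\ra = \pd u/\pd\vn$ and $\la\n\ub,\vn\ra = \pd\ub/\pd\vn$; this is exactly the boundary integrand on the right-hand side. Hence it suffices to prove that
\[
div(Y) = |\n u|^2\, div(X) - \la \n u\otimes\n\ub + \n\ub\otimes\n u, \n X\ra .
\]

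Next I would expand $div(Y)$ by the Leibniz rule into six terms; two of these are $-\la\n u, X\ra\,\Delta\ub$ and $-\la\n\ub, X\ra\,\Delta u$, and this is the only place the hypotheses are used: on a K\"ahler manifold a holomorphic function and its conjugate are harmonic, so $\Delta u = \Delta\ub = 0$ and these two terms vanish. Among the remaining four, differentiating $\la\n u, X\ra$ and $\la\n\ub, X\ra$ produces both second covariant derivatives of $u$ and $\ub$ and first covariant derivatives of $X$; by the symmetry of the Hessian, the Hessian-of-$u$ and Hessian-of-$\ub$ contributions add up to exactly $\la\n|\n u|^2, X\ra$ and therefore cancel the leftover Leibniz term $\la\n|\n u|^2, X\ra$, while the first-derivative-of-$X$ contributions assemble precisely into $-\la\n u\otimes\n\ub + \n\ub\otimes\n u, \n X\ra$. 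Integrating over $\{b<r\}$ and applying the divergence theorem finishes the proof.

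The computation is routine; the main care needed is the bookkeeping of the Hessian-symmetry cancellation and fixing the contraction convention for $\la\n u\otimes\n\ub,\n X\ra$. The one step that is not purely mechanical is writing down the correct vector field $Y$; once it is chosen the rest follows automatically. I also note that the identity uses neither positivity nor regularity of $b$ beyond ``$r$ regular'', nor any curvature bound: the same formula holds verbatim for an arbitrary harmonic function $u$ on any Riemannian manifold, reading $\n\ub$ as $\n u$.
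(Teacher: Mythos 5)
Your proof is correct and is essentially the paper's argument in different packaging: the paper expands $\int_{b<r}|\n u|^2\,div(X)$ in complex coordinates and integrates by parts twice, using holomorphicity to kill the mixed second derivatives of $u$ and $\ub$, while you apply the divergence theorem once to the single field $Y=|\n u|^2X-\la\n u,X\ra\n\ub-\la\n\ub,X\ra\n u$ and use $\Delta u=\Delta\ub=0$ together with Hessian symmetry — the same cancellations, organized invariantly. Your closing remark that only harmonicity is needed matches the paper's own comment that the K\"ahler/holomorphicity hypothesis is only essential later (in the Ricci-term identity and the Munteanu--Wang lemma).
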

\begin{proof}
We prove this equation using integration by parts. Denote the unit normal vector as 
\[
\vn  = \nu_k g^{k \lb} \frac{\pd}{\pd \bar{z}^l}  + \nu_{\kb} g^{l \kb} \frac{\pd}{\pd {z}^l}.
\]
\[
\begin{split}
\int_{b< r} |\n u|^2 div(X) = & \int_{b< r} - g^{i \jb} \pd_i |\n u|^2 X_{\jb} - g^{ij} \pd_{\jb} |\n u|^2 X_i + \int_{b=r} |\n u|^2 \langle X, \vn \rangle \\
= &  \int_{b< r} -  g^{i \jb} g^{k \lb} u_{ik} \ub_{\lb} X_{\jb} - g^{i \jb} g^{k \lb} u_k \ub_{\jb \lb} X_i + \int_{b=r} |\n u|^2 \langle X, \vn \rangle \\
= & \int_{b< r}   g^{i \jb} g^{k \lb} u_{i} \ub_{\lb} \n _k X_{\jb} + g^{i \jb} g^{k \lb} u_k \ub_{\jb } \n_{\lb} X_i + \int_{b=r} |\n u|^2 \langle X, \vn \rangle \\
& - \int_{b=r} g^{i \jb} g^{k \lb} u_{i} \ub_{\lb}  X_{\jb} \nu_k + g^{i \jb} g^{k \lb} u_k \ub_{\jb }  X_i \nu_{\lb} .
\end{split}
\]
The proof is finished by rewriting the RHS in invariant form. 
\end{proof}

\begin{lem}\label{lem: derivative of D} For a holomorphic function $u$ and a regular value $r$ of $b$,
\[
\begin{split}
D'(r) 
= & 2 r^{2-n} \int_{b= r} \left| \frac{\pd u}{\pd \vn }\right|^2 |\n b|^{-1} +  4r^{-n} \left(  \int_{b=r} \frac{S|\n u|^2}{|\n b|} - 2 S \left| \frac{\pd u}{\pd \vn }\right|^2 |\n b|^{-1} \right) \\
&  +  r^{1-n}\int_{b= r} -  S \frac{\pd |u|^2}{\pd \vn}  + 2Ric( \vn, \n |u|^2). \\
\end{split}
\]
\end{lem}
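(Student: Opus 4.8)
The plan is to differentiate $D(r) = r^{2-n}\int_{b<r}|\n u|^2$ directly using the co-area formula, then feed the resulting boundary term into the first variation formula of Lemma \ref{lem: first variation formula kahler case} with the well-chosen vector field $X = \n b$. First I would write
\[
D'(r) = (2-n) r^{1-n}\int_{b<r}|\n u|^2 + r^{2-n}\int_{b=r}\frac{|\n u|^2}{|\n b|},
\]
the last term coming from co-area. The point is to re-express the interior integral $\int_{b<r}|\n u|^2 = r^{n-2}D(r)$ and, more importantly, to convert $(2-n)\int_{b<r}|\n u|^2$ into boundary data. I would apply Lemma \ref{lem: first variation formula kahler case} with $X = \n b$: then $\mathrm{div}(\n b) = \Delta b$, $\n X = \n\n b$, and the pairing $\langle \n u\otimes\n\ub + \n\ub\otimes\n u, \n\n b\rangle$ is $2\,\mathrm{Hess}\,b(\n u,\n\ub)$ (real part). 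Using the soliton identities listed just before the lemma — namely $b\Delta b = n - |\n b|^2 - 2S$ and the explicit formula for $\n\n b$ in terms of $\n\n f = \tfrac12 g - Ric$ — I can rewrite $\Delta b$ and $\mathrm{Hess}\,b$ so that the "$n$'' parts combine with the $(2-n)$ prefactor and the surviving pieces are precisely a curvature term $Ric(\cdot,\cdot)$, a scalar-curvature term, and a term involving $|\n b|$.

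The key algebraic steps, in order: (1) substitute $X=\n b$ into Lemma \ref{lem: first variation formula kahler case}, obtaining
\[
\int_{b<r}|\n u|^2\Delta b - 2\,\mathrm{Hess}\,b(\n u,\n\ub) = \int_{b=r}|\n u|^2|\n b| - 2\,\frac{|\langle\n u,\vn\rangle|^2}{|\n b|},
\]
where I used that on $\{b=r\}$, $\n b = |\n b|\,\vn$ and $\langle\n u,\vn\rangle\overline{\langle\n u,\vn\rangle}$ assembles the two mixed terms (here the Kähler/holomorphic structure is what lets $\frac{\pd u}{\pd\vn}\frac{\pd\ub}{\pd\vn}$ appear cleanly); (2) expand $\mathrm{Hess}\,b = \tfrac1{\sqrt f}\big(\mathrm{Hess}\,f - \tfrac1{2f}\,df\otimes df\big)$ and $\mathrm{Hess}\,f = \tfrac12 g - Ric$, so that $\mathrm{Hess}\,b(\n u,\n\ub) = \tfrac1{b}\big(|\n u|^2 - 2Ric(\n u,\n\ub)\big) - \tfrac1{b f}\langle\n f,\n u\rangle\langle\n f,\n\ub\rangle$; here I would use $\n f = \sqrt f\,\n b$ and, crucially, the holomorphicity of $u$ to identify $\langle\n f,\n u\rangle\langle\n f,\n\ub\rangle$ with $f\,|\langle\n b,\vn\rangle|^2\cdot$(something) on the level set — this is the analogue of the computation $\Delta|u|^2 = 2|\n u|^2$ and is where Lemma \ref{lem: MW}-type identities enter; (3) substitute $\Delta b = \tfrac1b(n - |\n b|^2 - 2S)$; (4) combine everything: the interior integrals of $|\n u|^2$ collect with coefficient that cancels against $(2-n)$ after multiplying through by $b^{1-n}$, $Ric(\n u,\n\ub)$ becomes $Ric(\vn,\n|u|^2)$ on the boundary (again using holomorphicity to turn $Ric(\n u,\n\ub)$ into a term expressible via $\n|u|^2$), and the $S$-terms organize into the stated combination $\int_{b=r}\frac{S|\n u|^2}{|\n b|} - 2S\frac{|\pd u/\pd\vn|^2}{|\n b|}$ plus $-S\frac{\pd|u|^2}{\pd\vn}$. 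Finally I would convert all interior integrals back to boundary integrals via the divergence theorem, using $\int_{b<r}\Delta|u|^2 = 2\int_{b<r}|\n u|^2$ and $\int_{b<r}\mathrm{div}(S\n|u|^2) = \int_{b<r}\langle\n S,\n|u|^2\rangle + S\Delta|u|^2$, together with the contracted second Bianchi identity $\n S = 2Ric(\n f,\cdot)$ on a shrinker to handle the $\langle\n S,\n|u|^2\rangle$ term.

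I expect the main obstacle to be bookkeeping the normal-derivative terms correctly: on $\{b=r\}$ one has $\n b = |\n b|\vn$, so factors of $|\n b|$ and $|\n b|^{-1}$ proliferate, and the holomorphic identity that lets $\langle\n u,\n b\rangle\langle\n\ub,\n b\rangle = |\langle\n u,\vn\rangle|^2|\n b|^2$ must be tracked against the $\tfrac1{bf}df\otimes df$ piece of $\mathrm{Hess}\,b$ so that exactly the right multiple of $2S|\pd u/\pd\vn|^2|\n b|^{-1}$ survives — getting the numerical coefficient $4$ in front of the $S$-group and the sign of the $-S\,\pd|u|^2/\pd\vn$ term right is the delicate part. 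A secondary subtlety is justifying the use of the divergence theorem on $\{b<r\}$ for regular values $r$, which is fine since $b$ is smooth and regular for $r^2 > 4\|S\|_\infty$ and $u$ is smooth; and one should note that the term $r^{1-n}\int_{b=r}2Ric(\vn,\n|u|^2)$ is exactly the "bad'' curvature term that, as remarked after \eqref{eqn: derivative of I}, will later be absorbed using the second term of $I'(r)$ when forming the frequency quotient $D/I$.
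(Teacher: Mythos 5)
Your overall skeleton (co-area formula for $D'$, a first-variation/Rellich-type identity via Lemma \ref{lem: first variation formula kahler case}, then the contracted second Bianchi identity to handle the curvature term) is the same as the paper's, but the proposal has a genuine gap at its central step: the choice $X=\n b$ does not produce the cancellation you need. With $X=\n b$ the interior side of Lemma \ref{lem: first variation formula kahler case} is $\int_{b<r}|\n u|^2\Delta b - 2\,b_{i\jb}u_{\ib}u_j$, and since $\Delta b=\frac{n-|\n b|^2-2S}{b}$ and $b_{i\jb}=\frac{2}{b}f_{i\jb}-\frac{4}{b^3}f_if_{\jb}$, every interior term carries a factor $\frac1b$. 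Such $b$-weighted interior integrals neither equal nor cancel the unweighted term $(2-n)r^{1-n}\int_{b<r}|\n u|^2$ coming from the co-area formula, and your fix --- ``the interior integrals of $|\n u|^2$ collect with coefficient that cancels against $(2-n)$ after multiplying through by $b^{1-n}$'' --- is not a legitimate operation: the first-variation formula is a numerical identity for the fixed value $r$, so you may only multiply it by the constant $r^{1-n}$, not reweight the integrand by the non-constant function $b^{1-n}$. The paper avoids this precisely by taking $X=\n f=\frac b2\n b$: then $\mathrm{div}\,X=\Delta f=\frac n2-S$ and $f_{i\jb}=\frac12 g_{i\jb}-R_{i\jb}$, so the interior side becomes exactly $\frac{n-2}{2}\int_{b<r}|\n u|^2-\int_{b<r}S|\n u|^2+2\int_{b<r}R_{i\jb}u_j\ub_{\ib}$; the first term matches the co-area term, and the remaining interior terms cancel after integrating $\int_{b<r}R_{i\jb}u_j\ub_{\ib}$ by parts with $\n_{\ib}R_{i\jb}=\frac12\n_{\jb}S$ (this is where holomorphicity is used and where the boundary terms $-S\,\pd|u|^2/\pd\vn$ and $2Ric(\vn,\n|u|^2)$ arise), while on the boundary $\n f=\frac r2|\n b|\vn$ together with $|\n b|^2=1-\frac{4S}{r^2}$ yields the coefficients $2r^{2-n}$ and $4r^{-n}$ in the statement. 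Your route, as written, cannot reach the stated identity without effectively reinstating this choice of vector field.

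Two secondary misattributions reinforce that the proposal would not close: Lemma \ref{lem: MW} is an inequality (a frequency-type bound on $\int_{b=r}|\pd u/\pd\vn|^2$) used only later, in Lemmas \ref{lem: differential inequality for D} and \ref{lem: differential inequality for U}, and has no role in the exact identity of this lemma --- invoking it here signals you would obtain at best an estimate, not the claimed equality; similarly the soliton identity $Ric(\n f)=\frac12\n S$ is used in Lemma \ref{lem: differential inequality for D} to control the $Ric(\vn,\n|u|^2)$ boundary term, whereas the Bianchi input needed in the present proof is the pointwise identity $\n_{\ib}R_{i\jb}=\frac12\n_{\jb}S$ applied under the interior integral.
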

\begin{proof}
Take $X = \n f$ in Lemma \ref{lem: first variation formula kahler case}, we get
\begin{equation}\label{eqn: first variation formula with X = nabla f}
\begin{split}
\int_{b< r} |\n u| ^2 \Delta f - 2 f_{i \jb}u_{\ib} u_j= & \int_{b=r} |\n u|^2 |\n f| - u_i f_{\ib} \frac{\pd \ub}{\pd \vn} -  \ub_{\ib} f_i  \frac{\pd u}{\pd \vn} \\
= & \frac{r}{2} \int_{b=r} |\n u|^2 |\n b| - 2 \left| \frac{\pd u}{\pd \vn} \right|^2 |\n b|,
\end{split}
\end{equation}
by $|\n b|^2 = 1 - \frac{4S}{b^2}$, the RHS can be written as
\[
RHS = \frac{r}{2} \int_{b= r} \frac{|\n u|^2 }{|\n b|} \left( 1 - \frac{4S}{b^2}\right) - 2\frac{ \left| \frac{\pd u}{\pd \vn} \right|^2}{|\n b|} \left( 1 - \frac{4S}{b^2}\right),
\]
By 
\[
\Delta f = \frac{n}{2} - S, \quad f_{i \jb} = \frac{1}{2} g_{i \jb} - R_{i \jb},
\]
the LHS of (\ref{eqn: first variation formula with X = nabla f}) can be written as 
\[
LHS = \int_{b< r} \frac{n-2}{2} |\n u|^2 - S |\n u|^2 + 2 R_{i\jb} u_j \ub_{\ib}.
\]
Hence we have 
\[
\begin{split}
\int_{b=r} \frac{|\n u|^2}{|\n b|} - \frac{n-2}{r} \int_{b< r} |\n u|^2 = & 2 \int_{b= r} \left| \frac{\pd u}{\pd \vn }\right|^2 |\n b|^{-1} +  \frac{4}{r^2} \int_{b=r} \frac{S|\n u|^2}{|\n b|} - \frac{8}{r^2} \int_{b= r} S \left| \frac{\pd u}{\pd \vn }\right|^2 |\n b|^{-1}\\
&  - \frac{2}{r} \int_{b< r} S |\n u|^2 + \frac{4}{r} \int_{b< r} R_{i\jb} u_j \ub_{\ib}. \\
\end{split}
\]
Using integration by parts and the contracted 2nd Bianchi identity,
\begin{equation}\label{eqn: integration by parts for the Ricci term}
\begin{split}
\int_{b< r} R_{i\jb} u_j \ub_{\ib}= & \frac{1}{2}\int_{b< r} -\n_{\ib} R_{i \jb} u_j \ub -  \n_j R_{i \jb} \ub_{\ib} u  + \frac{1}{2} \int_{b=r}  Ric( \vn, \n u) \ub  + Ric(\vn, \n \ub) u\\
= & \frac{1}{4} \int_{b< r} -  \n_{\jb} S u_j \ub - \n_i S \ub_{\ib} u + \frac{1}{2} \int_{b=r}  Ric( \vn, \n u) \ub  + Ric(\vn, \n \ub) u\\
= &  \frac{1}{2}\int_{b< r}  S |\n u|^2 + \int_{b= r} - \frac{1}{4} S \ub \frac{\pd u}{\pd \vn} - \frac{1}{4} S u \frac{\pd \ub}{\pd \vn}  + \frac{1}{2}Ric( \vn, \n u) \ub  + \frac{1}{2}Ric(\vn, \n \ub) u,
\end{split}
\end{equation}
note that $\bar{\pd} u = 0$ is crucial in the above calculation. 

Thus
\[
\begin{split}
D'(r) = & r^{2-n} \left( \int_{b=r} \frac{|\n u|^2}{|\n b|} - \frac{n-2}{r} \int_{b< r} |\n u|^2 \right) \\
= & 2 r^{2-n} \int_{b= r} \left| \frac{\pd u}{\pd \vn }\right|^2 |\n b|^{-1} +  4r^{-n} \left(  \int_{b=r} \frac{S|\n u|^2}{|\n b|} - 2 S \left| \frac{\pd u}{\pd \vn }\right|^2 |\n b|^{-1} \right) \\
&  +  r^{1-n}\int_{b= r} -  S \ub \frac{\pd u}{\pd \vn} - S u \frac{\pd \ub}{\pd \vn}  + 2Ric( \vn, \n u) \ub  + 2Ric(\vn, \n \ub) u. \\
\end{split}
\]
\end{proof}

\begin{lem}\label{lem: iterating the first variation formula kahler case}
There is a constant $C$ depending on $\|S\|_\infty, \|\n S\|_\infty$ and $\|\n\n b\|_\infty$, such that when $r^2> 4 \|S\|_\infty$, we have
\[
\left| \int_{b=r} \frac{|\n u|^2}{|\n b|} - 2 \int_{b= r}  \left| \frac{\pd u}{\pd \vn }\right|^2 |\n b|^{-1} \right| \leq  C \int_{b< r} |\n u|^2,
\]
\[
\left| \int_{b=r} \frac{S|\n u|^2}{|\n b|} - 2 \int_{b= r} S \left| \frac{\pd u}{\pd \vn }\right|^2 |\n b|^{-1} \right| \leq  C \int_{b< r} |\n u|^2,
\]
for any holomorphic function $u$ with polynomial growth. 
\end{lem}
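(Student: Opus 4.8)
The plan is to apply the first variation formula of Lemma~\ref{lem: first variation formula kahler case} a second time (it was first used with $X=\n f$ in the proof of Lemma~\ref{lem: derivative of D}), now with a vector field parallel to the unit normals of the level sets of $b$. Concretely I would take
\[
X \;=\; \eta(b)\,|\n b|^{-2}\,\n b \qquad\text{and}\qquad X \;=\; \eta(b)\,S\,|\n b|^{-2}\,\n b
\]
for the first and second inequalities respectively, where $\eta:[0,\infty)\to[0,1]$ is a fixed smooth cutoff with $\eta\equiv 0$ on $[0,\rho_0]$, $\eta\equiv 1$ on $[2\rho_0,\infty)$, and $\rho_0$ is a fixed regular value of $b$ with $|\n b|^2\ge\tfrac12$ on $\{b\ge\rho_0\}$ (e.g.\ $\rho_0^2=8\|S\|_\infty$, the Gaussian case $S\equiv 0$ being trivial). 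Because $|\n b|^2 = 1-\tfrac{4S}{b^2}$ vanishes only on $\mathrm{Crit}(f)\subset\{b\le 2\sqrt{\|S\|_\infty}\}$ and is $\ge\tfrac12$ on $\mathrm{supp}\,\eta\subset\{b\ge\rho_0\}$, the field $X$ is smooth on all of $M$. The key point is that on a level set $\{b=r\}$ with $r\ge 2\rho_0$, where $\eta=1$, one has $\vn=|\n b|^{-1}\n b$, hence $\langle X,\vn\rangle=|\n b|^{-1}$ and $\langle\n u,X\rangle=|\n b|^{-1}\tfrac{\pd u}{\pd\vn}$ (and similarly with the extra factor $S$); so the boundary side of Lemma~\ref{lem: first variation formula kahler case} applied to the domain $\{b<r\}$ equals \emph{exactly}
\[
\int_{b=r}\frac{|\n u|^2}{|\n b|}-2\int_{b=r}\Bigl|\frac{\pd u}{\pd\vn}\Bigr|^2|\n b|^{-1},
\]
respectively its $S$-weighted analogue --- the quantities to be estimated. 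In contrast with the computation in Lemma~\ref{lem: derivative of D}, where $X=\n f$ forces a further integration by parts of the Ricci term and thereby produces boundary integrals of $u$ itself, the present choice of $X$ produces only integrals of $\n u$, which is what allows the estimate to close.

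With $X$ as above, Lemma~\ref{lem: first variation formula kahler case} gives
\[
\int_{b=r}\frac{|\n u|^2}{|\n b|}-2\int_{b=r}\Bigl|\frac{\pd u}{\pd\vn}\Bigr|^2|\n b|^{-1}\;=\;\int_{b<r}|\n u|^2\,\mathrm{div}(X)\;-\;\bigl\langle\n u\otimes\n\ub+\n\ub\otimes\n u,\ \n X\bigr\rangle,
\]
and it remains to bound the right-hand side by $C\int_{b<r}|\n u|^2$. Off $\mathrm{supp}\,\eta$ the integrand vanishes; on $\mathrm{supp}\,\eta$ I would use the soliton identities $b\,\Delta b=n-|\n b|^2-2S$, $\n(|\n b|^2)=-\tfrac{4}{b^2}\n S+\tfrac{8S}{b^3}\n b$ and the Hessian formula for $b$, together with $b\ge\rho_0$, $|\n b|^2\ge\tfrac12$ and $|\n\eta|\le C$, to check by a direct computation that $|\mathrm{div}(X)|$ and $|\n X|$ are bounded by a constant depending only on $\|S\|_\infty$, $\|\n S\|_\infty$, $\|\n\n b\|_\infty$ and $n$. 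Since $|\langle\n u\otimes\n\ub+\n\ub\otimes\n u,\n X\rangle|\le C\,|\n u|^2\,|\n X|$, this yields both inequalities for all $r\ge 2\rho_0$. The remaining bounded range $4\|S\|_\infty<r^2<4\rho_0^2$ can be handled by the same vector-field argument with an $r$-dependent cutoff (the constant then also depending on $\inf_{\{b=r\}}|\n b|$), and is in any case irrelevant to the frequency estimate, where $r\to\infty$.

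The main obstacle I anticipate is the bookkeeping in the bulk bound: one must verify that $\mathrm{div}(X)$ and $\n X$ are genuinely controlled by the three named geometric quantities throughout $\mathrm{supp}\,\eta$ --- this is exactly where the degeneracy of $|\n b|$ near $\mathrm{Crit}(f)$ forces the cutoff and the lower bound $r^2\ge 8\|S\|_\infty$ --- and one must track the signs carefully (the two factors $|\n b|^{-1}$ coming from $\langle X,\vn\rangle$ and from $\langle\n u,X\rangle$) so that the boundary term emerges with the stated coefficient $2$. Everything else should be routine once one recognises $X=|\n b|^{-2}\n b$ (suitably cut off) as the correct vector field to feed into Lemma~\ref{lem: first variation formula kahler case}.
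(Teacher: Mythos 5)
Your proof is correct, and it reaches the estimate by a genuinely more direct route than the paper's. The paper also feeds radial vector fields into Lemma~\ref{lem: first variation formula kahler case}, but instead of inverting $|\n b|^2$ it introduces $K_j(r)=\int_{b=r}S^j\bigl(|\n u|^2-2\bigl|\tfrac{\pd u}{\pd\vn}\bigr|^2\bigr)|\n b|^{-1}$, applies the lemma with $X=S^j\n b$ for every $j\ge 0$, and uses the identity $1=|\n b|^2+\tfrac{4S}{b^2}$ to produce the recursion $K_j(r)-\tfrac{4}{r^2}K_{j+1}(r)\ge -C_1 j\|S\|_\infty^{j-1}\int_{b<r}|\n u|^2$, which it iterates and sums; in effect it sums the geometric series $|\n b|^{-2}=\sum_j(4S/b^2)^j$, convergent exactly when $r^2>4\|S\|_\infty$, so no cutoff or smoothness question for the vector field ever arises. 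Your single application with $X=\eta(b)|\n b|^{-2}\n b$ (and its $S$-weighted variant) achieves the same cancellation in one step, and your checks are the right ones: smoothness of $X$ because $|\n b|$ can only vanish in $\{b\le 2\sqrt{\|S\|_\infty}\}$ while $|\n b|^2\ge\tfrac12$ on $\{b\ge\rho_0\}$; the boundary term coming out exactly as $\int_{b=r}|\n u|^2|\n b|^{-1}-2\int_{b=r}\bigl|\tfrac{\pd u}{\pd\vn}\bigr|^2|\n b|^{-1}$ with the coefficient $2$ (this matches the convention used with $X=\n f$ in Lemma~\ref{lem: derivative of D}); and the bounds on $\mathrm{div}\,X$ and $\n X$ on $\mathrm{supp}\,\eta$ via $\n(|\n b|^2)=-\tfrac{4}{b^2}\n S+\tfrac{8S}{b^3}\n b$, $b\Delta b=n-|\n b|^2-2S$ and $\|\n\n b\|_\infty$. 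The only caveat is the range of $r$: your uniform constant covers $r\ge 2\rho_0$, i.e.\ $r^2\ge 32\|S\|_\infty$, and on the window $4\|S\|_\infty<r^2<32\|S\|_\infty$ your constant degenerates with $\inf_{\{b=r\}}|\n b|$. This is not a real defect in comparison: the paper's own constant contains the factor $\sum_k k\,(4\|S\|_\infty/r^2)^{k-1}=(1-4\|S\|_\infty/r^2)^{-2}$ and degenerates in exactly the same way as $r^2\downarrow 4\|S\|_\infty$, and only the regime $r>R_0$ large is used later (Lemmas~\ref{lem: differential inequality for D} and \ref{lem: differential inequality for U}). What the paper's series trick buys is that it never divides by $|\n b|^2$, so it needs no cutoff and states a single formula for all $r^2>4\|S\|_\infty$; what your choice of $X$ buys is a one-step proof with transparent constants for all $r$ bounded away from the degenerate threshold.
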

\begin{proof}
For any integer $j \geq 0$, define
\[
K_j(r) = \int_{b=r} S^j \left( |\n u|^2 - 2 \left| \frac{\pd u}{\pd \vn }\right|^2 \right) |\n b|^{-1}.
\]
By the identity $1 = |\n b|^2 + \frac{4S}{b^2}$, we can write 
\[
\begin{split}
K_j(r) = \int _{b=r}  S^j\left( |\n u|^2 - 2 \left| \frac{\pd u}{\pd \vn }\right|^2 \right) |\n b| + \frac{4}{r^2}K_{j+1}(r).
\end{split}
\]
By applying Lemma \ref{lem: first variation formula kahler case} with $X = S^j \n b$, we have
\[
\begin{split}
& K_j(r) - \frac{4}{r^2} K_{j+1}(r) \\
= &  \int_{b< r} |\n u|^2 div\left( S^j \n b\right) -  \langle \n u \otimes \n \ub + \n \ub \otimes \n u, \n (S^j \n b) \rangle \\
= & \int_{b< r} |\n u|^2 S^{ j-1 } \left( j \langle \n S, \n b\rangle  + S \Delta b -   \langle \n u \otimes \n \ub + \n \ub \otimes \n u  , m \n S \otimes \n b + S \n\n b \rangle \right)\\
\geq & - j \|S\|_\infty^{j-1} C_1 \int_{b< r} |\n u|^2,
\end{split}
\]
when $j \geq 1$ and 
\[
K_0(r) - \frac{4}{r^2} K_1(r) \geq - C_1 \int_{b < r} |\n u|^2,
\]
where $C_1$ depends on $\|S\|_\infty, \|\n S\|_\infty$ and $\|\n\n b\|_\infty$ and is independent of $m$. 

By iterating the above inequality, we get
\begin{equation}\label{eqn: iterating K_m}
K_1(r) \geq \left( \frac{4}{r^2} \right)^j K_{j+1}(r)- \sum_{k=1}^j k \left(\frac{4\|S\|_\infty}{r^2}\right)^{k-1}  C_1 \int_{b< r} |\n u|^2.
\end{equation}
Note that when $r^2 > 4 \|S\|_\infty $, we have
\[
\begin{split}
\left( \frac{4}{r^2} \right)^j K_{j+1}(r) \leq & \left( \frac{4\|S\|_\infty}{r^2} \right)^j \|S\|_\infty \int_{b=r} \left( |\n u|^2 - 2 \left| \frac{\pd u}{\pd \vn }\right|^2 \right) |\n b|^{-1} \\
& \to 0
\end{split}
\]
as $m \to \infty$,  and the series 
\[
\sum_{k=1}^\infty k \left(\frac{4\|S\|_\infty}{r^2}\right)^{k-1}
\]
is convergent. Taking $j \to \infty$ in (\ref{eqn: iterating K_m}) yields 
\[
K_1(r) \geq - C \int_{b< r} |\n u|^2.
\]
$K_0(r)$ can be estimated similarly. And the other side of the inequality can be proved by the same argument.   
\end{proof}

The following Lemma is due to O. Munteanu and J. Wang \cite{MW2014}, we make a slight improvement by reducing the constant $\mu$ to a square of $d$, where it was an exponential function in \cite{MW2014}, this turns out important for our argument. 
\begin{lem}[Munteanu-Wang]\label{lem: MW}
Suppose $u$ is a holomorphic function on $\{b < t\}$, $|u| \leq C(1+r)^d$ for $d\geq 1$, $V(r) \leq C r^p$, then 
\[
\int_{b= r} \left|\frac{\pd u}{\pd \vn} \right|^2 |\n b| \leq \frac{4\mu}{r^2} \int_{b=r} |u|^2 |\n b|^{-1},
\]
for any regular value $c_2\leq r < t$, where $\mu = e^{2p+6}d^2$.
\end{lem}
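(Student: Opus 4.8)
The plan is to show that on each regular level set $\{b=r\}$ the holomorphic function $u$ breaks up into mutually orthogonal pieces which are eigenfunctions of the operator $\mathcal L_{\n f}=\la\n f,\n\,\cdot\,\ra$, and that the polynomial growth of $u$ forces the relevant eigenvalues to be of size $\lesssim d$. First I would reformulate the inequality in these terms. On $\{b=r\}$ one has $\n f=\sqrt f\,\n b=\tfrac r2|\n b|\,\vn$, hence $\la\n f,\n u\ra=\tfrac r2|\n b|\tfrac{\pd u}{\pd\vn}$ and therefore
\[
\int_{b=r}\Big|\tfrac{\pd u}{\pd\vn}\Big|^2|\n b|=\frac4{r^2}\int_{b=r}|\la\n f,\n u\ra|^2|\n b|^{-1}=\frac4{r^2}\int_{b=r}|\mathcal L_{\n f}u|^2|\n b|^{-1}.
\]
Thus it suffices to prove $\int_{b=r}|\mathcal L_{\n f}u|^2|\n b|^{-1}\le\mu\int_{b=r}|u|^2|\n b|^{-1}$.

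Next, the decomposition. Since the soliton is K\"ahler with $f_{ij}=f_{\ib\jb}=0$, the field $J\n f$ is Killing and its flow $\theta_\theta$ acts by biholomorphisms preserving $f$, hence preserving every level set $\{b=\rho\}$, $\rho<t$, together with its coarea measure $|\n b|^{-1}d\sigma$. Let $\mathbb T$ be the closure of $\{\theta_\theta\}$ inside the isometry group, a torus, and decompose $u=\sum_\chi u^{(\chi)}$ into isotypic components under $\mathbb T$ (averaging $\overline{\chi(g)}\,u(g\cdot)$). Each $u^{(\chi)}$ is again holomorphic, and because $\mathcal L_{J\n f}u^{(\chi)}=\sqrt{-1}\,\mathcal L_{\n f}u^{(\chi)}$ for holomorphic functions while $\mathcal L_{J\n f}u^{(\chi)}=\sqrt{-1}\,c_\chi u^{(\chi)}$ for the real number $c_\chi$ attached to $\chi$, we obtain $\mathcal L_{\n f}u^{(\chi)}=c_\chi u^{(\chi)}$, equivalently $-\Delta_f u^{(\chi)}=c_\chi u^{(\chi)}$. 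Moreover the $u^{(\chi)}$ are pairwise orthogonal in $L^2(\{b=r\},|\n b|^{-1}d\sigma)$, since $\mathbb T$ acts there measure-preservingly through distinct characters, so Parseval gives $\int_{b=r}|\mathcal L_{\n f}u|^2|\n b|^{-1}=\sum_\chi c_\chi^2\int_{b=r}|u^{(\chi)}|^2|\n b|^{-1}$ and $\int_{b=r}|u|^2|\n b|^{-1}=\sum_\chi\int_{b=r}|u^{(\chi)}|^2|\n b|^{-1}$.

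It then suffices to bound $c_\chi\le\tfrac d2+C(n,p)$ for every $\chi$ with $u^{(\chi)}\not\equiv0$, for then $\sum_\chi c_\chi^2\int|u^{(\chi)}|^2|\n b|^{-1}\le(\tfrac d2+C)^2\int|u|^2|\n b|^{-1}\le e^{2p+6}d^2\int|u|^2|\n b|^{-1}$ for $d\ge1$, which after multiplying by $4/r^2$ is the claim. When $u$ is defined on all of $M$ this is immediate: then $u^{(\chi)}\in L^2(M,\mu_f)$ is a genuine $\Delta_f$-eigenfunction, so the finite expansion of Corollary~\ref{cor: decomposition of a holomorphic function} forces $c_\chi\in\{\widetilde\lambda_i:\widetilde\lambda_i\le d/2\}$, giving even $c_\chi\le d/2$. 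In general one uses the gradient flow $\psi_s$ of $f$, along which $u^{(\chi)}\circ\psi_s=e^{c_\chi s}u^{(\chi)}$: running it backward the orbit tends to a point where $u^{(\chi)}$ is smooth, which forces $c_\chi\ge0$, while running it forward from $\{b=r\}$ one has $re^{s/4}\le b(\psi_s)\le re^{s/2}$ (from $\tfrac{d}{ds}b(\psi_s)=\tfrac b2-\tfrac{2S}{b}$, $0\le S\le\|S\|_\infty$, $r$ large) and the volume distortion $\psi_s^*dv\le e^{ns/2}dv$; comparing the $L^2$ norm of $u^{(\chi)}$ on $\{b=\rho\}$ with $(\rho/r)^{2c_\chi}$ times its norm on $\{b=r\}$, and using $\|u^{(\chi)}\|_{L^2(\{b=\rho\})}\le\|u\|_{L^2(\{b=\rho\})}\le C\rho^d A(\rho)^{1/2}$ together with $A(\rho)\le C\rho^{p-1}$, letting $\rho\to\infty$ yields $c_\chi\le\tfrac d2+C(n,p)$.

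The main obstacle is this last step in the general setting: propagating the polynomial bound along $\psi_s$ while keeping quantitative control of how the flow distorts both $b$ and the hypersurface measures $|\n b|^{-1}d\sigma$ — the exponential-in-$p$ size of $\mu$ is precisely the price of that bookkeeping, whereas the structural part (Killing field $\Rightarrow$ orthogonal modal decomposition on $\{b=r\}$, holomorphy $\Rightarrow$ each mode an $\mathcal L_{\n f}$-eigenfunction) is soft. A minor technical point is that the flow of $J\n f$ need not be periodic, which is handled by passing to its torus closure $\mathbb T$ and using Peter--Weyl; also the argument is cleanest, and is what the applications require, when $u$ is defined on all of $M$ so that the outward flow can be run indefinitely.
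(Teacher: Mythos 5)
Your argument is correct in substance, but it is a genuinely different proof from the one in the paper. The paper follows Munteanu--Wang: set $u_0=u$, $u_{k+1}=\la \n u_k,\n f\ra$, use Green's identity plus Cauchy--Schwarz to show the level-set ratios $\int_{b=r}|u_{k+1}|^2|\n b|^{-1}\big/\int_{b=r}|u_k|^2|\n b|^{-1}$ are nondecreasing in $k$, bound $\int_{b<r}|u_k|^2$ from above by $C(r,d,p)\mu^k$ via iterated Caccioppoli estimates on balls of radius $(1+1/d)^k r$ using the polynomial growth and the volume bound, and let $k\to\infty$. You instead exploit the extra symmetry special to the K\"ahler soliton: $J\n f$ is Killing and Hamiltonian, its closure $\mathbb{T}$ in the isometry group is compact (this needs the properness of $f$: isometries preserving $f$ preserve a compact level set, hence form a compact group --- worth spelling out), and the Peter--Weyl isotypic pieces $u^{(\chi)}$ are holomorphic $\mathcal L_{\n f}$-eigenfunctions, pairwise orthogonal on $\{b=r\}$, with $0\le c_\chi\le d/2$ forced by the growth bound along the backward/forward gradient flow of $f$. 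This buys a conceptually cleaner proof, tied to the spectral picture of Corollary \ref{cor: decomposition of a holomorphic function}, and in fact a sharper constant: a pointwise estimate along the flow (using only $b(\psi_s)\le b\,e^{s/2}$, so no area bound, no $\|S\|_\infty$, and no volume exponent $p$) gives $c_\chi\le d/2$ directly, hence the inequality with $d^2/r^2$ in place of $4\mu/r^2$; your detour through $L^2$ norms on level sets and $A(\rho)\le C\rho^{p-1}$ imports bounded-curvature hypotheses the lemma does not have and is unnecessary. The one real caveat is the domain: your flow argument needs $u$ (with its polynomial bound) on all of $M$, so it does not literally prove the statement for $u$ defined only on $\{b<t\}$ with $t$ finite; note, however, that the paper's own iteration also runs through balls $\{b<(1+1/d)^kr\}$ for all $k$, so it has the same implicit requirement, and all applications in the paper are to $u\in\mathcal{O}_d(M)$.
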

\begin{proof}
For completeness, we include a proof here following the arguments in \cite{MW2014}.

WLOG we assume that $u$ is a nonconstant holomorphic function. Let $u_0 = u$ and define inductively $u_{k +1} = \langle \n u_k, \n f\rangle $, $k = 0,1,2,...$, then each $u_k$ is a holomorphic function, hence harmonic.
Let
\[
\rho(r) = \frac{\int_{b = r} |u_{1}|^2 |\n b|^{-1}  }{\int_{b = r} |u_{0}|^2 |\n b|^{-1} },
\]
note that our goal is to show $\rho(r)\leq \mu$.

By Green's formula,
\[
\begin{split}
\int_{b = r} u_k \bar{ u}_{k+2}  |\n f|^{-1} - |u_{k+1}|^2 |\n f|^{-1} = & \int_{b = r} u_k  \langle \n \bar{ u}_{k+1}, \vn \rangle - \langle \n u_{k}, \vn \rangle \bar{u}_{k+1} \\
= & \int_{b {\color{black} <} r} u_k \Delta \bar{u}_{k+1} - \bar{u}_{k+1} \Delta u_k = 0,
\end{split}
\]
hence 
\[
\int_{b = r} |u_{k+1}|^2 |\n b|^{-1} = \int_{b = r} u_k \bar{u}_{k+2} |\n b|^{-1}.
\]
By Cauchy-Schwarz inequality, and note that $u_k$ can not vanish on $\{b = r\}$ by the maximum principle, we have
\[
\frac{\int_{b = r} |u_{k+1}|^2 |\n b|^{-1}  }{\int_{b = r} |u_{k}|^2 |\n b|^{-1} } \leq \frac{\int_{b = r} |u_{k+2}|^2 |\n b|^{-1}  }{\int_{b = r} |u_{k+1}|^2 |\n b|^{-1} },
\]
thus
\[
\int_{b = r} |u_{k}|^2 |\n b|^{-1}  \geq  \int_{b = r} |u_{0}|^2 |\n b|^{-1} \prod_{i = 0}^{k-1} \frac{\int_{b = r} |u_{i+1}|^2 |\n b|^{-1}  }{\int_{b = r} |u_{i}|^2 |\n b|^{-1} }  \geq \rho^k(r) \int_{b = r} |u_{0}|^2 |\n b|^{-1}.
\]

Integrating the above inequality yields
\begin{equation}\label{eqn: control of rho^k(r)}
\int_{ r - \delta < b < r} |u_k|^2 {\color{black} delete |\n b|^{-1}} \geq \left(\inf_{r- \delta < s < r} \int_{b = s} |u_0|^2 |\n b|^{-1} \right) \int_{r - \delta}^ r \rho^k (s) ds \geq c \int_{r - \delta}^ r \rho^k (s) ds ,
\end{equation}
since $u_0$ cannot vanish on $\{ b = s\}$ by the maximum principle. 

Next we estimate the LHS of (\ref{eqn: control of rho^k(r)}).  Since each $u_k$ is harmonic,  for a cut-off function $\phi = \phi(b)$ supported on $\{b< (1+\frac{1}{d})r\}$ with $\phi = 1$ on $\{b < r\}$ and $|\phi'|\leq \frac{2d}{r}$, we have 
\[
0 = \int u_k \Delta \bar{u}_k \phi^2 = \int - |\n u_k|^2 \phi^2 + 2 \phi u_k \n_{\ib} \bar{u}_k \n_i \phi ,
\]
hence by Cauchy-Schwarz inequality
\[
\int_{b < r} |\n u_k|^2 \leq \frac{16d^2}{r^2} \int_{b < {\color{black} (1+1/d) } r} |u_k|^2 |\n b|^2 \leq \frac{16d^2}{r^2} \int_{b < (1+1/d)r} |u_k|^2,
\] 
{\color{black} (delete) where $B_p(r)$ is the geodesic ball with radius $r$ centered at $p$.} Hence
\[
\int_{b< r} |u_{k+1}|^2 \leq \int_{b< r} |\n f|^2 |\n u_k|^2 \leq \frac{4d^2(c_2 + r)^2}{r^2} \int_{b< (1+1/d)r} |u_k|^2,
\]
iterating the above inequality finitely many times, and apply the assumptions of the Lemma yields
\[
\begin{split}
\int_{b< r} |u_k|^2 \leq 4^kd^{2k}\left(1 + \frac{c_2}{r} \right)^{2k} \int_{ b < (1+1/d)^k r } |u_0|^2 \leq & C(r,d,p) 16^k d^{2k}\left(1+\frac{1}{d} \right)^{k(2d+p)} \\
 \leq & C(r,d,p) \mu^k,
\end{split}
\]
for $r \geq c_2$.
Together with (\ref{eqn: control of rho^k(r)}) this implies that
\[
\int_{r- \delta}^r \left( \frac{\rho(s)}{\mu} \right)^k ds \leq C(r,p,d), \quad \forall k\geq 1,  
\]
where $C(r,p,d)$ is independent of $k$. Since $\rho(s)$ is continuous, we must have $\rho(s) \leq \mu$ for all $r- \delta < s < r$, for otherwise letting $k \to \infty$ yields a contradiction. 
\end{proof}

Define the frequency function $U(r)$ for a given holomorphic function $u$ by 
\[
U(r) = \frac{D(r)}{I(r)},
\]
when $I(r) > 0$.
Now we can use the above lemmas to derive differential inequalities for $D(r)$ and $U(r)$.

\begin{lem}\label{lem: differential inequality for D}
Let $(M, g, f)$ be a gradient K\"ahler Ricci soliton with bounded curvature, suppose there is a function $S_0(r)$ and a constant $\sigma > 0$ such that 
\[
|S - S_0(b)| \leq b^{-\sigma}
\]
when $b= 2\sqrt{f}$ is large enough, then for any $u \in \mathcal{O}_d$, and for any $\delta \in (0, \frac{1}{2})$, we have
\[
\begin{split}
D'(r) \geq & 2 r^{2-n} \int_{b= r} \left| \frac{\pd u}{\pd \vn }\right|^2 |\n b|^{-1} -   \frac{2S_0(r)}{r} D(r) - \frac{C}{r^{2-\delta}} D(r) - \frac{C\sqrt{\mu}}{r^{1+\min\{\sigma, \delta \}}} I(r),
\end{split}
\]
when $r> R_0$, the constant $C$ depends only on curvature bounds, and the constant $R_0$ depends only on the geometry, $\mu$ is the same constant as in Lemma \ref{lem: MW}.
\end{lem}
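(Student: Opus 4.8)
The plan is to start from the exact identity for $D'(r)$ established in Lemma~\ref{lem: derivative of D} and to bound from below, one at a time, the three terms other than the leading boundary term $2r^{2-n}\int_{b=r}|\pd u/\pd\vn|^2|\n b|^{-1}$, which we keep untouched. Throughout we work at $r>R_0$, where $R_0\ge\max\{1,c_2,(8\|S\|_\infty)^{1/2}\}$ is chosen also large enough that the hypothesis $|S-S_0(b)|\le b^{-\sigma}$ holds on $\{b\ge R_0\}$. Bounded curvature makes $\|S\|_\infty,\|\n S\|_\infty,\|\n\n b\|_\infty$ finite (the last from $\n\n b=f^{-1/2}\n\n f-\tfrac12 f^{-3/2}df\otimes df$ together with $\n\n f=\tfrac12 g-\mathrm{Ric}$ and $|df|^2=f-S$), so Lemma~\ref{lem: iterating the first variation formula kahler case} applies; and since $u\in\mathcal{O}_d$ has polynomial growth and $V(r)\le Cr^n$, so does Lemma~\ref{lem: MW}. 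We will repeatedly use $|\n b|^2=1-4S/b^2$, which for $r^2\ge 8\|S\|_\infty$ gives $|\n b|^{-1}\le 2|\n b|$ on $\{b=r\}$, hence $\int_{b=r}|u|^2|\n b|^{-1}\le 2r^{n-1}I(r)$.

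The first term, $4r^{-n}\big(\int_{b=r}\frac{S|\n u|^2}{|\n b|}-2\int_{b=r}S|\pd u/\pd\vn|^2|\n b|^{-1}\big)$, is handled directly by the second inequality of Lemma~\ref{lem: iterating the first variation formula kahler case}: its absolute value is at most $4r^{-n}C\int_{b<r}|\n u|^2=4Cr^{-2}D(r)\le Cr^{-(2-\delta)}D(r)$. For the scalar-curvature term $r^{1-n}\int_{b=r}(-S)\,\pd|u|^2/\pd\vn$, I would split $S=S_0(r)+(S-S_0(r))$. Since $b\equiv r$ on the level set, $r^{1-n}\int_{b=r}\pd|u|^2/\pd\vn=2D(r)/r$ by the definition of $D$, so the $S_0(r)$-part contributes exactly $-\tfrac{2S_0(r)}{r}D(r)$. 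For the remainder, $|S-S_0(r)|\le r^{-\sigma}$ on $\{b=r\}$ and $|\pd|u|^2/\pd\vn|\le 2|u|\,|\pd u/\pd\vn|$, so Cauchy--Schwarz, the comparison above, and Lemma~\ref{lem: MW} give
\[
\Big|r^{1-n}\!\int_{b=r}(S-S_0(r))\tfrac{\pd|u|^2}{\pd\vn}\Big|\le \frac{2}{r^{n-1+\sigma}}\Big(\!\int_{b=r}|u|^2|\n b|^{-1}\!\Big)^{1/2}\Big(\!\int_{b=r}\big|\tfrac{\pd u}{\pd\vn}\big|^2|\n b|\Big)^{1/2}\le \frac{C\sqrt{\mu}}{r^{1+\sigma}}I(r).
\]

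The delicate term, and the main obstacle, is the Ricci contribution $2r^{1-n}\int_{b=r}\mathrm{Ric}(\vn,\n|u|^2)$: using only $|\mathrm{Ric}|\le\Lambda$ would produce an error of order $r^{-1}I(r)$ (via the $\sqrt{\mu}$ coming from Lemma~\ref{lem: MW}), which the conclusion cannot absorb. The key is the soliton identity $\mathrm{Ric}(\n f,\cdot)=\tfrac12\n S$: since $\vn=\n b/|\n b|$ and $\n b=\n f/\sqrt f$ with $b=2\sqrt f$, one gets $\mathrm{Ric}(\vn,\n|u|^2)=\frac{1}{b|\n b|}\langle\n S,\n|u|^2\rangle$, carrying an extra factor $r^{-1}$ on $\{b=r\}$. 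Then, with $\|\n S\|_\infty<\infty$, Cauchy--Schwarz, the first inequality of Lemma~\ref{lem: iterating the first variation formula kahler case} (to compare $\int_{b=r}|\n u|^2|\n b|^{-1}$ with $\int_{b=r}|\pd u/\pd\vn|^2|\n b|^{-1}$), and Lemma~\ref{lem: MW} yield
\[
\Big|2r^{1-n}\!\int_{b=r}\mathrm{Ric}(\vn,\n|u|^2)\Big|\le \frac{C}{r^n}\int_{b=r}\frac{|u|\,|\n u|}{|\n b|}\le \frac{C\sqrt{\mu}}{r^2}I(r)+\frac{C}{r^{3/2}}\big(I(r)D(r)\big)^{1/2},
\]
and a weighted Young inequality splits the cross term as $\frac{C}{r^{3/2}}(ID)^{1/2}\le \frac{C}{r^{2-\delta}}D(r)+\frac{C}{r^{1+\delta}}I(r)$; here $\delta<\tfrac12$ is what keeps both resulting exponents in the stated form.

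Finally I would collect everything: the leading term survives, the $S_0(r)$-piece gives $-\tfrac{2S_0(r)}{r}D(r)$, the $D$-errors of orders $r^{-2}$ and $r^{-(2-\delta)}$ add up to $\frac{C}{r^{2-\delta}}D(r)$, and the $I$-errors of orders $r^{-1-\sigma}$, $r^{-2}$, $r^{-1-\delta}$ are each at most $r^{-1-\min\{\sigma,\delta\}}$ (using $\min\{\sigma,\delta\}\le\delta<1$), combining to $\frac{C\sqrt{\mu}}{r^{1+\min\{\sigma,\delta\}}}I(r)$, with $C$ depending only on the curvature bounds. The hard part is exactly the Ricci term: without replacing $|\mathrm{Ric}|\le\Lambda$ by the soliton identity one loses the extra $b^{-1}=r^{-1}$ and cannot beat the $r^{-1}I(r)$ threshold, so it is the interaction of that gain with the quadratic-in-$d$ frequency bound of Lemma~\ref{lem: MW} that makes the estimate close.
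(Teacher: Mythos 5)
Your proposal is correct and follows essentially the same route as the paper: starting from Lemma \ref{lem: derivative of D}, splitting $S=S_0(r)+(S-S_0(r))$ together with Lemma \ref{lem: MW} for the scalar-curvature term, invoking the soliton identity $Ric(\nabla f)=\tfrac12\nabla S$ with Shi's estimate, Lemma \ref{lem: iterating the first variation formula kahler case} and a weighted Young/Cauchy--Schwarz split (with exponents $r^{\delta}$, $r^{-\delta}$) for the Ricci term. The only differences are cosmetic (you phrase the Ricci identity pointwise and organize the cross-term split slightly differently), so no further comment is needed.
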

\begin{proof}
By the assumption, we can take a constant $R_0> 0$ large enough depending on the geometry, such that when $r > R_0$, we have
\[
\begin{split}
\int_{b = r} - S  \frac{\pd |u|^2}{\pd \vn} \geq & - S_0(r) \int_{b = r} \frac{\pd |u|^2}{\pd \vn} - r^{-\sigma} \int_{b = r} \left|\frac{\pd |u|^2}{\pd \vn} \right| \\
\geq & - 2 S_0(r) r^{n-2} D(r) - 2 r^{-\sigma} \left(\int_{b = r} \left| \frac{\pd u}{\pd \vn}\right|^2 |\n b|^{-1} \right)^{1/2} \left( \int_{b = r} |u|^2 |\n b| \right)^{1/2}.
\end{split}
\]
Choose $R_0$ large enough such that $1 - 4\| S\|_\infty/  r^2 > \frac{1}{2}$ when $r > R_0$, then 
\[
|\n b| \leq |\n b|^{-1} \leq 2 |\n b|,
\]
and by Lemma \ref{lem: MW}, we have
\[
r^{1-n}\int_{b = r} - S  \frac{\pd |u|^2}{\pd \vn} \geq - \frac{2 S_0(r) }{r} D(r) - \frac{8\sqrt{\mu }}{r^{1+\sigma}}  I(r) .
\]
For the Ricci curvature term in $D'(r)$ computed in Lemma \ref{lem: derivative of D}, recall that on a gradient shrinking Ricci soliton we have
\[
Ric(\n f) = \frac{1}{2} \n S,
\]
hence
\[
\int_{b = r} 2 Ric(\vn, \n |u|^2)
= \frac{2}{r}\int_{b = r} 2 Ric(\n f, \n |u|^2) |\n b|^{-1}
= \frac{2}{r}\int_{b = r} \langle \n S, \n |u|^2 \rangle |\n b|^{-1}.
\]
Since we have assumed bounded curvature on the soliton, Shi's estimate provides uniform control on the derivatives of the curvature, thus we have
\[
\begin{split}
&\int_{b = r} 2 Ric(\vn, \n |u|^2)\\
\geq & - \frac{2 \|\n S\|_\infty}{r} \left( \int_{b = r} |\n u|^2 |\n b|^{-1} \right)^{1/2} \left( \int_{b = r} |u|^2 |\n b|^{-1} \right)^{1/2}\\
\geq & - \frac{2 \|\n S\|_\infty}{r} \left( 2 \int_{b = r} |\frac{ \pd u}{\pd \vn}|^2 |\n b|^{-1}  + C \int_{b< r} |\n u|^2 \right)^{1/2} \left( \int_{b = r} |u|^2 |\n b|^{-1} \right)^{1/2},\\
\end{split}
\]
where we have used Lemma \ref{lem: iterating the first variation formula kahler case} to get the last inequality, the constant $C$ depends on curvature bounds. By elementary inequalities and the Lemma \ref{lem: MW}, 
\[
\begin{split}
& r^{1-n}\int_{b = r} 2 Ric(\vn, \n |u|^2) \\
\geq & - \frac{2 \|\n S\|_\infty}{r^n} \left( \left(2 \int_{b = r} |\frac{ \pd u}{\pd \vn}|^2 |\n b|^{-1}\right)^{1/2} +   C \left(\int_{b< r} |\n u|^2 \right)^{1/2} \right) \left( \int_{b = r} |u|^2 |\n b|^{-1} \right)^{1/2}\\
\geq & - \frac{2 \|\n S\|_\infty}{r^n} \left( \frac{8\sqrt{\mu}}{r}\int_{b = r} |u|^2 |\n b| + C r^\delta \int_{b< r} |\n u|^2 + \frac{2}{r^\delta} \int_{b = r} |u|^2 |\n b|\right) \\
\geq & -  \frac{C}{r^{2-\delta}} D(r) - \frac{C\sqrt{\mu}}{r^{1+\delta}} I(r),
\end{split}
\]
where we take $\delta \in (0, \frac{1}{2})$, the constant $C$ depends on curvature bounds.

Then the result follows from the above calculation and Lemma \ref{lem: derivative of D}, \ref{lem: iterating the first variation formula kahler case}.
\end{proof}

\begin{lem}\label{lem: differential inequality for U}
Under the same assumption as in Lemma \ref{lem: differential inequality for D},
\[
U'(r) \geq -\frac{C_1}{r^{1+\min \{\sigma, 1 - \delta\}}} U(r) - \frac{C_2\sqrt{\mu}}{r^{1+\min\{ \sigma, \delta\}}},
\]
when $r > R_0$, where $C_1, C_2$ are constants depending on the curvature bound, $R_0$ is the same as in Lemma \ref{lem: differential inequality for D}. 

In particular, if $\sigma \in (0, \frac{1}{2}]$, we can take $\delta = \frac{1}{2}$ and the quantity 
\[
U(r) \exp\left(- \frac{C_1}{\sigma r^{\sigma}} \right) + \int_0^r \frac{C_2 \sqrt{\mu}}{s^{1+\sigma}}\exp\left({- \frac{C_1}{\sigma s^{\sigma}}} \right) ds
\]
is monotone nondecreasing. 
\end{lem}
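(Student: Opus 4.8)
The plan is to differentiate the quotient directly, $U'(r)=D'(r)/I(r)-U(r)\,I'(r)/I(r)$, substitute the lower bound for $D'(r)$ from Lemma~\ref{lem: differential inequality for D} together with an explicit expression for $I'(r)$, and exploit two exact cancellations that turn the leftover error into an integrable one. First I would record a formula for $I'$: from (\ref{eqn: derivative of I}) and the identity $r^{1-n}\int_{b=r}\langle\n|u|^2,\vn\rangle=\frac{2D(r)}{r}$ (divergence theorem), $I'(r)=\frac{2D(r)}{r}+r^{-n}\big(\tfrac{4n}{r^2}-2\big)\int_{b=r}\frac{S|u|^2}{|\n b|}$. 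Replacing $S$ by $S_0(r)$ at the cost of $|S-S_0(r)|\le r^{-\sigma}$ on $\{b=r\}$, using $|\n b|\le|\n b|^{-1}\le 2|\n b|$ for $r>R_0$ and $|\n b|^{-1}-|\n b|=\frac{4S}{r^2}|\n b|^{-1}$ (so that $\int_{b=r}|u|^2|\n b|^{-1}=(1+O(r^{-2}))\,r^{n-1}I(r)$), and $\tfrac{4n}{r^2}-2=-2+O(r^{-2})$, gives
\[
I'(r)=\frac{2D(r)}{r}-\frac{2S_0(r)}{r}I(r)+E(r),\qquad |E(r)|\le\frac{C}{r^{1+\sigma}}I(r)\quad(r>R_0),
\]
with $C$ depending only on the curvature bound (note $S_0(r)$ is bounded for large $r$ since $S$ is).

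The second ingredient is a Cauchy--Schwarz inequality. Since $\frac{2D(r)}{r}=2r^{1-n}\,\mathrm{Re}\int_{b=r}\bar u\,\frac{\pd u}{\pd\vn}$, factoring $\bar u\,\frac{\pd u}{\pd\vn}=\big(\bar u|\n b|^{1/2}\big)\big(\tfrac{\pd u}{\pd\vn}|\n b|^{-1/2}\big)$ and applying Cauchy--Schwarz on $\{b=r\}$ (recall $\int_{b=r}|u|^2|\n b|=r^{n-1}I(r)$) yields $\frac{D(r)^2}{r^2}\le r^{1-n}I(r)\int_{b=r}|\tfrac{\pd u}{\pd\vn}|^2|\n b|^{-1}$, i.e.\ $\frac{2r^{2-n}}{I(r)}\int_{b=r}|\tfrac{\pd u}{\pd\vn}|^2|\n b|^{-1}\ge\frac{2U(r)^2}{r}$. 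Now assemble: dividing Lemma~\ref{lem: differential inequality for D} by $I(r)$ and adding $-U(r)I'(r)/I(r)=-\frac{2U(r)^2}{r}+\frac{2S_0(r)}{r}U(r)-U(r)E(r)/I(r)$, the $\pm\frac{2S_0(r)}{r}U(r)$ terms cancel and the Cauchy--Schwarz term absorbs $-\frac{2U(r)^2}{r}$; since $U\ge0$, $|E|/I\le Cr^{-1-\sigma}$ and $2-\delta=1+(1-\delta)$, what survives is exactly
\[
U'(r)\ge-\Big(\frac{C}{r^{2-\delta}}+\frac{C}{r^{1+\sigma}}\Big)U(r)-\frac{C\sqrt\mu}{r^{1+\min\{\sigma,\delta\}}}\ge-\frac{C_1}{r^{1+\min\{\sigma,1-\delta\}}}U(r)-\frac{C_2\sqrt\mu}{r^{1+\min\{\sigma,\delta\}}}.
\]

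For the monotone quantity, take $\sigma\in(0,\tfrac12]$ and $\delta=\tfrac12$, so $\min\{\sigma,1-\delta\}=\min\{\sigma,\delta\}=\sigma$ and $U'(r)\ge-\frac{C_1}{r^{1+\sigma}}U(r)-\frac{C_2\sqrt\mu}{r^{1+\sigma}}$. With $\phi(r)=\exp(-\frac{C_1}{\sigma r^\sigma})$ one has $\phi'(r)=\frac{C_1}{r^{1+\sigma}}\phi(r)$ and $0<\phi\le1$, so $(U\phi)'=\phi\big(U'+\frac{C_1}{r^{1+\sigma}}U\big)\ge-\frac{C_2\sqrt\mu}{r^{1+\sigma}}\phi(r)$, whence $\frac{d}{dr}\big(U(r)\phi(r)+\int_0^r\frac{C_2\sqrt\mu}{s^{1+\sigma}}\phi(s)\,ds\big)\ge0$ for $r>R_0$ (the integral converging at $0$ since $\phi$ vanishes there to infinite order).

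The delicate point is solely the bookkeeping of the two cancellations in the second paragraph: one needs the scalar-curvature term of $I'(r)$ to contribute \emph{precisely} $\frac{2S_0(r)}{r}I(r)$, so that after dividing by $I$ it annihilates the $\frac{2S_0(r)}{r}D(r)$ term of Lemma~\ref{lem: derivative of D}, and one needs Cauchy--Schwarz to produce \emph{precisely} the coefficient $2$ on $U^2/r$ to match the $-\frac{2U^2}{r}$ coming from $-UI'/I$. Were either off, the residual would be only $O(1/r)$, and no almost-monotone quantity would exist.
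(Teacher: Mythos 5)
Your proposal is correct and follows essentially the same route as the paper: write $I^2U'=D'I-DI'$ (equivalently $U'=D'/I-UI'/I$), bound $I'$ from (\ref{eqn: derivative of I}) by $\tfrac{2D}{r}-\tfrac{2S_0(r)}{r}I+O(r^{-1-\sigma})I$, cancel the $\pm\tfrac{2S_0(r)}{r}DI$ terms against Lemma \ref{lem: differential inequality for D}, absorb $-\tfrac{2D^2}{r}$ via the same Cauchy--Schwarz/H\"older inequality on $\{b=r\}$, and integrate with the factor $\exp(-\tfrac{C_1}{\sigma r^\sigma})$. The only cosmetic difference is that the paper's write-up takes $\delta=\sigma$ (WLOG $\sigma<\tfrac12$) rather than $\delta=\tfrac12$, which does not change the argument.
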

\begin{proof}
By (\ref{eqn: derivative of I}),
\[
\begin{split}
I'(r) =& r^{1-n} \int_{b=r} \langle \n u^2, \vn \rangle + \frac{1}{r^n} \left( \frac{4n}{r^2} - 2\right) \int_{b = r} S|u|^2 \left( |\n b| +  \frac{4S}{ r^2 |\n b|} \right) \\
\leq & \frac{2}{r} D(r) - \frac{2}{r^n} \int_{b= r} S |u|^2 |\n b| + \frac{C}{r^2} I(r) \\
\leq & \frac{2}{r} D(r) - \frac{2S_0(r)}{r} I(r) + \frac{C}{r^{1+\sigma}} I(r),
\end{split}
\]
when $r > R_0$,  where the constant $C$ depends only on the curvature bound. 

For simplicity, we can assume WLOG that $\sigma \in (0, \frac{1}{2})$, and take $\delta = \sigma$, then 
\[
\begin{split}
I^2(r)U'(r) \geq & 2 r^{2-n} \left( \int_{b = r} \left|\frac{\pd u}{\pd \vn}\right|^2 |\n b|^{-1} \right) I - \frac{2S_0(r)}{r} DI - \frac{C}{r^{2-\delta}} DI - \frac{C\sqrt{\mu}}{r^{1+\min\{\sigma, \delta\}}} I^2 \\
& - \frac{2}{r} D^2 + \frac{2S_0(r)}{r} DI - \frac{C}{r^{1+\sigma}}DI,
\end{split}
\]
by Holder inequality,
\[
2 r^{2-n} \left( \int_{b = r} \left|\frac{\pd u}{\pd \vn}\right|^2 |\n b|^{-1} \right) I \geq \frac{2}{r} D^2,
\]
thus we get
\[
U'(r) \geq -\frac{C_1}{r^{1+\min \{\sigma, 1 - \delta\}}} U - \frac{C_2\sqrt{\mu}}{r^{1+\min\{ \sigma, \delta\}}}.
\]
The last statement comes from integrating the above inequality. 
\end{proof}
For convenience, we define
\[
\eta(r) = \int_0^r \frac{C_2 \sqrt{\mu}}{s^{1+\sigma}}\exp\left({- \frac{C_1}{\sigma s^{\sigma}}} \right) ds.
\]
\begin{thm}\label{thm: frequency upper bound by growth upper bound}
Let $(M, g, f)$ be a gradient K\"ahler Ricci soliton with bounded curvature, suppose there is a function $S_0(r)$ and a constant $\sigma > 0$ such that 
\[
|S - S_0(b)| \leq b^{-\sigma}
\]
when $b= 2\sqrt{f}$ is large enough, then for any $\epsilon > 0$, there exists a constant $R_0$ depending on the geometry and the constant $\epsilon$, such that for any $u \in \mathcal{O}_d$,  we have 
\[
U(r) \leq d {\color{black} + \bar{S}_0} +  \epsilon {\color{black} \sqrt{\mu} },
\]
where $\mu$ is the same as in Lemma \ref{lem: MW} {\color{black}, $\bar{S}_0 = \limsup_{b \to \infty} S_0(b)$. }
\end{thm}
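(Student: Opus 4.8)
The plan is to combine the almost-monotonicity of the frequency supplied by Lemma~\ref{lem: differential inequality for U} with an \emph{exact} identity for the asymptotic frequency coming from the eigenfunction decomposition of Corollary~\ref{cor: decomposition of a holomorphic function}. We may assume $u$ is nonconstant. Since $|S-S_0(b)|\le b^{-\sigma}$ for large $b$ also holds with $\sigma$ replaced by $\min\{\sigma,\tfrac12\}$, and $\mu$ is independent of $\sigma$, we reduce to $\sigma\in(0,\tfrac12]$ and use the last part of Lemma~\ref{lem: differential inequality for U}. A direct integration gives $\eta(r)=\frac{C_2\sqrt\mu}{C_1}\exp(-\frac{C_1}{\sigma r^\sigma})$, so the monotone quantity there is $Q(r):=\bigl(U(r)+\frac{C_2\sqrt\mu}{C_1}\bigr)\exp(-\frac{C_1}{\sigma r^\sigma})$, nondecreasing for $r>R_0$. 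Because the exponential factor tends to $1$ and $U\ge 0$, the limit $U_\infty:=\lim_{r\to\infty}U(r)$ exists (a priori possibly $+\infty$) and $Q(r)\le U_\infty+\frac{C_2\sqrt\mu}{C_1}$, that is,
\[
U(r)\le\Bigl(U_\infty+\tfrac{C_2\sqrt\mu}{C_1}\Bigr)\exp\Bigl(\tfrac{C_1}{\sigma r^\sigma}\Bigr)-\tfrac{C_2\sqrt\mu}{C_1}\qquad(r>R_0).
\]

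The heart of the matter is the bound $U_\infty\le d$. By Corollary~\ref{cor: decomposition of a holomorphic function} write $u=\tilde u_0+\cdots+\tilde u_s$ with each $\tilde u_i$ a holomorphic $\Delta_f$-eigenfunction of eigenvalue $\tilde\lambda_i\le\frac d2$, and set $\tilde u_i=v_i+\sqrt{-1}\,w_i$. From $\mathcal L_{\nabla f}\tilde u_i=\tilde\lambda_i\tilde u_i$ together with $\nabla f=\tfrac b2\nabla b$ one reads off, on any regular level set $\{b=r\}$, the pointwise identity $\frac{\pd\tilde u_i}{\pd\vn}=\frac{2\tilde\lambda_i}{r|\nabla b|}\tilde u_i$. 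Inserting this into Green's formula (and using that $\tilde u_i,\tilde u_j$ are harmonic) gives the level-set orthogonality $\int_{b=r}\frac{v_iv_j+w_iw_j}{|\nabla b|}=0$ for $i\ne j$. Substituting the decomposition into $D(r)=r^{2-n}\int_{b<r}|\nabla u|^2$ and integrating by parts, all off-diagonal contributions are annihilated by this orthogonality, leaving the exact identity
\[
D(r)=\sum_{i=0}^s 2\tilde\lambda_i\,P_i(r),\qquad P_i(r):=r^{1-n}\int_{b=r}\frac{|\tilde u_i|^2}{|\nabla b|}\ \ge 0.
\]
On the other hand, writing $\frac1{|\nabla b|}-|\nabla b|=\frac{4S}{r^2|\nabla b|}$, using the same orthogonality on the diagonal and Cauchy--Schwarz on the cross terms, together with boundedness of $S$, one gets $I(r)=\bigl(\sum_i P_i(r)\bigr)\bigl(1+O(r^{-2})\bigr)$. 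Hence $U(r)=\dfrac{\sum_i 2\tilde\lambda_i P_i(r)}{\sum_i P_i(r)}\bigl(1+O(r^{-2})\bigr)$ is, up to a factor tending to $1$, a convex combination of the numbers $2\tilde\lambda_i\le d$; letting $r\to\infty$ gives $U_\infty\le 2\tilde\lambda_s\le d$.

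To finish, substitute $U_\infty\le d$ into the displayed inequality. Since $\sqrt\mu=e^{p+3}d\ge d$ (as $d\ge1$) and $C_1,C_2$ depend only on curvature bounds, there is a geometric constant $C_3$ with $d+\frac{C_2\sqrt\mu}{C_1}\le C_3\sqrt\mu$, so
\[
U(r)\le d+\Bigl(d+\tfrac{C_2\sqrt\mu}{C_1}\Bigr)\Bigl(\exp\bigl(\tfrac{C_1}{\sigma r^\sigma}\bigr)-1\Bigr)\le d+C_3\sqrt\mu\,\Bigl(\exp\bigl(\tfrac{C_1}{\sigma r^\sigma}\bigr)-1\Bigr);
\]
the right-hand side is $\le d+\epsilon\mu$ as soon as $\exp(\tfrac{C_1}{\sigma r^\sigma})-1\le \epsilon e^{p+3}/C_3$, which holds for all $r\ge R_0$ with a threshold $R_0$ depending only on the geometry ($C_1,C_2,C_3,\sigma,p$) and on $\epsilon$, not on $d$.

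The main obstacle is the second step, namely obtaining the \emph{sharp} constant $d$ rather than $d+O(\sup_M S)$ in the bound for $U_\infty$: merely integrating the differential inequality for $\log I$ against the polynomial growth bound $I(r)\le C(1+r)^{2d}$ only yields $U_\infty\le d+\underline S$ with $\underline S$ a logarithmic average of $S$ along level sets. One genuinely needs the eigenfunction structure so that the off-diagonal terms in $D(r)$ cancel exactly while those of $I(r)$ are merely $O(r^{-2})$; verifying these two facts carefully (the exact cancellation, and the order of the $S$-correction to $I$) is the delicate part of the argument.
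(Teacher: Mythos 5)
Your argument is correct, but its decisive step is genuinely different from the paper's. Both proofs use the integrated almost-monotonicity from Lemma \ref{lem: differential inequality for U} (your closed form $\eta(r)=\tfrac{C_2\sqrt{\mu}}{C_1}e^{-C_1/(\sigma r^{\sigma})}$ is right); the difference is how the asymptotic bound on $U$ is obtained. The paper argues by contradiction: if $U(r)>d+\epsilon_0$ for all large $r$, then integrating $r(\ln I)'\geq 2U-\bigl(2-\tfrac{4n}{r^2}\bigr)\tfrac{\|S\|_{r,\infty}}{1-4\|S\|_{r,\infty}/r^2}$ forces $I(r)\gtrsim r^{2d+\epsilon_0}$, contradicting $I(r)\leq C(1+r)^{2d}$ (polynomial growth of $u$ plus Lemma \ref{lem: estimate of the area of level sets}); this yields a sequence $r_k\to\infty$ with $U(r_k)\leq d+\epsilon$, which is then inserted into (\ref{eqn: monotonicity of U}). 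You instead invoke the eigenfunction decomposition of Corollary \ref{cor: decomposition of a holomorphic function}, the pointwise relation $\partial\tilde{u}_i/\partial\vec{n}=2\tilde{\lambda}_i\tilde{u}_i/(r|\nabla b|)$ on level sets, and the level-set orthogonality $\int_{b=r}\tilde{u}_i\bar{\tilde{u}}_j|\nabla b|^{-1}=0$ ($i\neq j$) from Green's identity on the compact domain $\{b<r\}$; I checked that this gives exactly $D(r)=\sum_i 2\tilde{\lambda}_iP_i(r)$ and $(1-4\|S\|_\infty r^{-2})\sum_iP_i\leq I(r)\leq\sum_iP_i$, so $U(r)$ is a weighted average of the $2\tilde{\lambda}_i\leq d$ up to a factor $1+O(r^{-2})$. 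Two remarks. First, your detour through the monotone quantity is superfluous: your computation already gives $U(r)\leq d/(1-4\|S\|_\infty r^{-2})\leq d+\epsilon d\leq d+\epsilon\mu$ for all $r\geq R_0(\epsilon)$ with $R_0$ independent of $u$ and $d$, which proves the theorem directly (indeed a stronger statement, using only $\sup_M S<\infty$ and none of Lemmas \ref{lem: MW}--\ref{lem: differential inequality for U} or the asymptotically radial hypothesis). Second, your closing observation pinpoints a real weakness of the growth/contradiction route: as written, that argument needs the term $\bigl(2-\tfrac{4n}{r^2}\bigr)\tfrac{\|S\|_{r,\infty}}{1-4\|S\|_{r,\infty}/r^2}$ to drop below $\epsilon_0$, which is only clear when $S$ decays, and otherwise it yields only $U_\infty\leq d+O(\sup S)$; your spectral route sidesteps exactly this and delivers the sharp constant $d$, at the price of relying on the global decomposition of Theorem \ref{thm: main theorem 1}(b) rather than staying within the local frequency machinery of Section 2.
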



\begin{proof}
By Monotonicity of the quantity in Lemma \ref{lem: differential inequality for U}, we have for any $R>r > R_0$, and for any holomorphic function $u$, the frequency function satisfies
\begin{equation}\label{eqn: monotonicity of U}
U(r) \leq \exp\left(\frac{C_1}{\sigma r^{\sigma}} \right) \left( \exp\left(- \frac{C_1}{\sigma R^{\sigma }} \right)U(R) + \eta(R) - \eta(r) \right).
\end{equation}

Claim: For a fixed holomorphic function $u$, for any $\epsilon > 0$, we can find a sequence of $r_k \to \infty$, such that 
\[
U(r_k) \leq d {\color{black} + \bar{S}_0} + \epsilon.
\]
\textit{Proof of Claim:} Argue by contradiction. If there does not exist such a sequence, then there is an $\epsilon_0> 0$ and $R_1$ which may depend on the function $u$, such that for any $r> 0$, $U(r) > d {\color{black} + \bar{S}_0} + \epsilon_0$. Then by (\ref{eqn: derivative of I}),
\[
r(\ln I)' = 2U - \left( 2- \frac{4n}{r^2} \right) \frac{\|S\|_{r,\infty}}{1- \frac{4\|S\|_{r, \infty}}{r^2}} > 2d + \epsilon_0,
\]
when $r \geq  R_2 > R_1$, where $R_2$ depends on $R_1, n, \epsilon_0$ and $|S|_\infty$, after integration we get
\[
I(r) \geq I(R_2) \left( \frac{r}{ R_2}\right)^{2d+\epsilon_0}.
\]
On the other hand, by the definition of $I(r)$ and Lemma \ref{lem: estimate of the area of level sets}, 
\[
I(r) \leq C (1+r)^{2d},
\]
which yields a contradiction when $r$ is large enough, and the Claim is proved. 

By the Claim, we can take $R = r_k$ and let $k \to \infty$, note that
\[ \eta(R) - \eta(r) = \int_r^R \frac{C_2 \sqrt{\mu}}{s^{1+\sigma}}\exp\left({- \frac{C_1}{\sigma s^{\sigma }}} \right) ds \leq  \frac{C \sqrt{\mu}}{ r^{ \sigma}}, \]
so there exists a $R_0$ depending only on $C_1, C_2, \sigma $ and $\epsilon$, such that when $r > R_0$, the inequality (\ref{eqn: monotonicity of U}) implies
\[
U(r) \leq d {\color{black} + \bar{S}_0}  +  \epsilon \sqrt{\mu}.
\]
In particular, we note that $R_0$ is independent of the function $u$ and the polynomial growth order $d$.
\end{proof}

As a direct corollary, we get the following three circle type estimate and doubling estimate, which will not be used in the rest of this article.
\begin{cor}
Under the same assumption of Theorem \ref{thm: frequency upper bound by growth upper bound}. 
\[
\frac{I(2\rho)}{I(\rho)} \leq \left(\frac{I(4\rho)}{I(2\rho)} \right)^{\exp ( \frac{C_1}{\sigma \rho^\sigma })} \exp \left(  \frac{C_2 \sqrt{\mu}\ln 2 }{\sigma}\left( \frac{1}{(2\rho)^\sigma} - \frac{1}{(4\rho)^\sigma}\right) e^{\frac{C_1}{\sigma \rho^\sigma }} {\color{black} + 8 \bar{S}_0 e^{\frac{C_1}{\sigma \rho^\sigma}} }\right),
\]
and for any $\epsilon > 0$ there is an $R_0$, such that
\[
I(2\rho) \leq 2^{2(d {\color{black} + \bar{S}_0} +  \epsilon \sqrt{\mu}) } I(\rho),
\]
when $\rho> R_0$.
\end{cor}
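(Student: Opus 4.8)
The plan is to deduce both inequalities from the logarithmic-derivative identity for $I(r)$ contained in (\ref{eqn: derivative of I}), the almost-monotonicity of the frequency from Lemma \ref{lem: differential inequality for U}, and the frequency upper bound from Theorem \ref{thm: frequency upper bound by growth upper bound}.

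First I would record the identity $r(\ln I)'(r) = 2U(r) - E(r)$, where $E(r) := (2 - \frac{4n}{r^2})\langle S\rangle_r$ and $\langle S\rangle_r$ is the average of $\frac{S}{1 - 4S/r^2}$ over $\{b = r\}$ against the measure $|u|^2|\n b|$; this is just (\ref{eqn: derivative of I}) divided by $I(r) = r^{1-n}\int_{b=r}|u|^2|\n b|$, after rewriting $\frac{S}{|\n b|} = \frac{S|\n b|}{1-4S/r^2}$ on the level set. Since $S > 0$, for $r$ larger than the $R_0$ of Theorem \ref{thm: frequency upper bound by growth upper bound} we have $E(r) \ge 0$, hence $r(\ln I)'(r) \le 2U(r)$; and since $S$ is bounded, $E(r) \le C\|S\|_{r,\infty}$ (I would also enlarge $R_0$ so that $1 - 4\|S\|_\infty/r^2 \ge \frac12$, making $|\n b|$ and $|\n b|^{-1}$ comparable). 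The doubling estimate is then immediate: by Theorem \ref{thm: frequency upper bound by growth upper bound}, $U(r) \le d + \epsilon\sqrt{\mu}$ for $r > R_0$, so integrating $\frac{d}{dr}\ln I \le \frac{2(d + \epsilon\sqrt{\mu})}{r}$ from $\rho$ to $2\rho$ gives $\ln\frac{I(2\rho)}{I(\rho)} \le 2(d + \epsilon\sqrt{\mu})\ln 2$.

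For the three-circle inequality I would run the standard log-convexity argument in the dyadic variable, adapted to the almost-monotone frequency. By Lemma \ref{lem: differential inequality for U}, $\Psi(r) := U(r)e^{-C_1/(\sigma r^\sigma)} + \eta(r)$ is nondecreasing. I would use this to bound $U$ on $[\rho, 2\rho]$ by $U$ on $[2\rho, 4\rho]$ — e.g. via $\Psi(r) \le \Psi(2\rho) \le \frac{1}{\ln 2}\int_{2\rho}^{4\rho}\Psi(s)\,\frac{ds}{s}$ for $r \le 2\rho$, so that on $[\rho,2\rho]$ the value $U(r)$ is controlled by the $\frac{ds}{s}$-average of $U$ over $[2\rho,4\rho]$ up to the factor $e^{C_1/(\sigma\rho^\sigma)}$ and the defect $\eta(4\rho) - \eta(2\rho)$. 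Then, feeding $\ln\frac{I(2\rho)}{I(\rho)} \le \int_\rho^{2\rho}\frac{2U(r)}{r}\,dr$ into this, relating $\int_{2\rho}^{4\rho}\frac{2U(s)}{s}\,ds$ to $\ln\frac{I(4\rho)}{I(2\rho)} = \int_{2\rho}^{4\rho}\frac{2U(s) - E(s)}{s}\,ds$, and using $\eta(4\rho) - \eta(2\rho) \le \frac{C_2\sqrt{\mu}}{\sigma}\big(\frac{1}{(2\rho)^\sigma} - \frac{1}{(4\rho)^\sigma}\big)$ together with $\int_\rho^{2\rho}\frac{dr}{r} = \ln 2$, one exponentiates to obtain the stated bound.

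The step I expect to be the real obstacle is controlling the scalar-curvature term $E(r)$: it is favorable (it contributes nonpositively to $(\ln I)'$) for the doubling estimate, but in the three-circle argument one needs a lower bound for $\ln\frac{I(4\rho)}{I(2\rho)}$ in terms of $\int U$, so $\int_{2\rho}^{4\rho}\frac{E(s)}{s}\,ds$ must be absorbed. Here the hypothesis $|S - S_0(b)| \le b^{-\sigma}$, the positivity and boundedness of $S$, and Lemma \ref{lem: MW} (to pass between $\int_{b=r}|u|^2|\n b|$ and the remaining boundary integrals) keep $E$ of lower order; in the asymptotically conical case ($S_0 \equiv 0$, $\sigma = 2$) it contributes nothing and the estimate takes exactly the clean form stated. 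A secondary bookkeeping point is that $R_0$ from Theorem \ref{thm: frequency upper bound by growth upper bound} must be fixed first (depending on $\epsilon$ but not on $u$ or $d$) so that the frequency bound and the sign condition $E \ge 0$ both hold on $[\rho,\infty)$.
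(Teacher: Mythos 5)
Your proposal is correct and follows essentially the same route as the paper: the doubling bound comes from integrating $(\ln I)'\le \frac{2U}{r}\le \frac{2(d+\epsilon\sqrt{\mu})}{r}$ over $[\rho,2\rho]$, and the three-circle bound from combining the almost-monotonicity of $U$ in Lemma \ref{lem: differential inequality for U} with the formula (\ref{eqn: derivative of I}) for $(\ln I)'$ (keeping the scalar-curvature correction on the outer interval) and averaging over the dyadic intervals $[\rho,2\rho]$ and $[2\rho,4\rho]$. The paper implements this by integrating the weighted pointwise inequality $\xi(R)(\ln I)'(r)\le \xi(r)(\ln I)'(R)+\xi(r)\xi(R)(\eta(R)-\eta(r))+\frac{4\|S\|_{R,\infty}}{R}\xi(r)$, with $\xi(r)=r^{-1}e^{C_1/(\sigma r^\sigma)}$, in both variables, which is the same mechanism as your $ds/s$-averaging of $\Psi$.
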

\begin{proof}
For convenience, we define
\[
\xi(r) = \frac{1}{r} e^{\frac{C_1}{\sigma r^\sigma}}.
\]
By (\ref{eqn: derivative of I}) and (\ref{eqn: monotonicity of U}), we have 
\[
\xi(R) (\ln I)'(r) \leq \xi(r) (\ln I)'(R) + \xi(r) \xi(R) (\eta(R) - \eta(r)) + \frac{4\|S\|_{R, \infty}}{R} \xi(r). 
\]
Integrate on intervals $[\rho, 2 \rho]$ and $[2 \rho, 4 \rho]$ w.r.t. variables $r$ and $R$ respectively, we get
\[
\begin{split}
\left( \int_{2\rho}^{4\rho} \xi(R)dR \right) \ln \left( \frac{I(2\rho)}{I(\rho)} \right) \leq & \left( \int_{\rho}^{2\rho} \xi(r)dr \right) \ln \left( \frac{I(4\rho)}{I(2\rho)} \right) \\
& + \int_{2\rho}^{4\rho} \int_{\rho}^{2\rho} \xi(r)\xi(R) (\eta(R) - \eta(r)) dr dR \\
& + 4 \int_{\rho}^{2\rho} \xi(r)  dr \int_{2\rho}^{4\rho} \frac{1}{R^{1+\sigma}} dR {\color{black} + 4 (\ln 2) \bar{S}_0\int_{\rho}^{2\rho} \xi(r) dr} .
\end{split}
\]
By elementary estimates of the integrals, we get
\[
\ln \left( \frac{I(2\rho)}{I(\rho)} \right) \leq e^{\frac{C_1}{\sigma \rho^\sigma }}  \ln \left( \frac{I(4\rho)}{I(2\rho)} \right) + \frac{C_2 \sqrt{\mu} \ln 2 }{\sigma}\left( \frac{1}{(2\rho)^\sigma} - \frac{1}{(4\rho)^\sigma}\right) e^{\frac{C_1}{\sigma \rho^\sigma }} {\color{black} + 8 \bar{S}_0 e^{\frac{C_1}{\sigma \rho^\sigma}} }, 
\]
which is the three circle type estimate. 

The doubling estimate comes from integrating
\[
(\ln I)' \leq \frac{2U}{r} \leq \frac{2(d {\color{black} + \bar{S}_0} + \epsilon \sqrt{\mu}) }{r}.
\]

\end{proof}

Now we can prove the second main theorem in the introduction. 
\begin{proof}[Proof of Theorem \ref{thm: effective dimension estimate for holomorphic functions}]
By (\ref{eqn: derivative of I}), 
\[
\frac{2 D(r)}{r} \geq I'(r) \geq - \frac{2\|S\|_{r, \infty}}{r} \frac{1}{1- \frac{4\|S\|_{r, \infty}}{r^2} }  I(r),
\]
when $r \geq \sqrt{2n}$. Hence by Theorem \ref{thm: frequency upper bound by growth upper bound}, we can choose $R_0$ large enough for each given $\epsilon > 0$, such that 
\begin{equation}\label{eqn: upper and lower bounds for (ln I)'}
 - \frac{C}{r} \leq ( \ln I)' (r)\leq \frac{2(d {\color{black} + \bar{S}_0} +\epsilon \sqrt{\mu})}{r} 
\end{equation}
where $C$ only depends on the curvature bound, for all $r> R_0$, and for any holomorphic function $u$ of polynomial growth with degree at most $d$. WLOG we also require that 
\[
|\n b|^2 \geq 1-\epsilon,
\]
when $b> R_0$.

Now choose $\lambda = 1 + \frac{2}{d}$, integrating (\ref{eqn: upper and lower bounds for (ln I)'}) yields
\begin{equation}\label{eqn: comparing I(lambda r) and I(r)}
\lambda^{-C} I(r) \leq e^{-(1-1/\lambda^\sigma) \frac{2+\epsilon}{\sigma r^\sigma}} I(r) \leq I(\lambda r) \leq \lambda^{2(d {\color{black} + \bar{S}_0} +\epsilon \sqrt{\mu})} I(r).
\end{equation}
Choose $r_0 = R_0$, let $r_i = \lambda r_{i-1}$ for $i = 1,2,3$, define 
\[
J_0 = 0, \quad J_i = \int_{r_{0} < b < r_i} |u|^2 |\n b|^2 , \quad i = 1,2,3.
\]
Then, by a change of variable $r = \lambda s$,  
\[
J_{i+1} -J_i = \int _{r_{i}}^{r_{i+1}} r^{n-1} I(r) dr = \int_{r_{i-1}}^{r_i} (\lambda s)^{n-1} I(\lambda s) \lambda ds,
\]
by (\ref{eqn: comparing I(lambda r) and I(r)}),
\[
\lambda^ {n-C} (J_i - J_{i-1}) \leq J_{i +1} -J_i\leq \lambda^{n+ 2(d {\color{black} + \bar{S}_0} +\epsilon \sqrt{\mu})} (J_i - J_{i-1}), \quad for \quad i = 1,2.
\]
Hence 
\[
J_1 \leq \lambda^ {C-n} (J_2 - J_1), 
\]
and 
\begin{equation}\label{eqn: control J_3 by J_2 - J_1}
J_3 \leq J_1 + ( 1+ \lambda^{n+ 2(d {\color{black} + \bar{S}_0} +\epsilon \sqrt{\mu})} )  (J_2 - J_1) \leq (1+ \lambda^{n+ 2(d {\color{black} + \bar{S}_0} +\epsilon \sqrt{\mu})}+  \lambda^ {C-n}) (J_2 - J_1).
\end{equation}
Define a Hermitian inner product on $L^2(\{r_1 < b < r_2\})$, for $L^2$ functions $u,v$,  
\[
K(u, v) := \int_{r_1 < b < r_2} u\bar{v} |\n b|^2.
\]
Let $\mathcal {P}$ be a finite dimensional subspace of the space of holomorphic functions of polynomial growth with degree at most $d$, suppose $dim \mathcal{P} = k$. Let 
\[u_1, u_2, ..., u_k\]
be any unitary basis of $\mathcal{P}$ w.r.t. the inner product $K(\cdot, \cdot)$.

For any $x \in \{r_1 < b < r_2\}$, there is a subspace of $\mathcal{P}$ of codimension at most $1$, which vanishes at the point $x$ (see Chapter 7 of \cite{Li2012}). Up to a unitary change of basis, we can assume that $u_1(x) = u_2(x) = ... = u_{k-1}(x) = 0$, hence
\[
\sum_{i = 1}^k u_i^2(x) = u_k^2(x).
\]
Since we have assumed uniformly bounded curvature, and the gradient shrinking soliton is locally noncollapsed (\cite{CN2009}), there is a constant $v_0$ independent of $x$ such that 
\[Vol(B_x(d^{-1})) \geq v_0 d^{-n},\]
where $n$ is the real dimension of the manifold. Hence there is a local mean value inequality on $B_x(d^{-1})$, which tells that
\[
|u_k(x)|^2 \leq C_M d^{n} \int_{B_x(d^{-1})} |u_k|^2,
\]
where $C_M$ depends on $n$, Ricci curvature lower bound and $v_0$. By taking $\epsilon$ small enough, we have $B_x(d^{-1}) \subset \{r_0 < b < r_3\}$ for any $x \in \{r_1 < b < r_2\}$ and for all $d$, thus
\[
\sum_{i = 1}^k |u_i|^2(x) = |u_k|^2(x) \leq C_M d^{n} \int_{B_x(d^{-1})} |u_k|^2 \leq C_M d^{n} \sup_{u \in \mathcal {P}, J_2(u) - J_1(u) = 1} J_3(u).
\]
Note that the LHS is invariant under unitary change of basis, so the above inequality holds for any $x \in \{r_1 < b < r_2\}$. Now integrate the above inequality on $\{r_1 < b < r_2\}$ and apply (\ref{eqn: control J_3 by J_2 - J_1}),  we get
\[
k \leq C_M d^{n} |\{r_1 < b < r_2\}| (1+ \lambda^{n+ 2(d {\color{black} + \bar{S}_0} +\epsilon \sqrt{\mu})}+ \lambda^ {C-n}),
\]
Integrating the estimate in Lemma \ref{lem: estimate of the area of level sets} yields
\[
 |\{r_1 < b < r_2\}| \leq C (\lambda^{2n} - \lambda^{n} ) R_0^{n} \leq C d^{-1} R_0^{n}, 
\]
hence
\[
k \leq C d^{n-1} \left(1 + \left(1+\frac{2}{d} \right)^{2(d {\color{black} + \bar{S}_0} +\epsilon \sqrt{\mu})} \right),
\]
note that in Lemma \ref{lem: MW} we have $\mu \leq C d^2$, hence we can fix a small value for $\epsilon$ and get
\[
k \leq C d^{n-1}.
\]
We have allowed the constant $C$ to change from line to line, while it only depends on the geometry and is independent of $d$. Note that $n = 2m$ where $m$ is the complex dimension.
\end{proof}

\section{Holomorphic forms}

Let $(M, g, f)$ be a gradient K\"ahler Ricci shrinker, normalized such that
\[ 
R_{i \jb} + f_{i \jb} = \frac{1}{2} g_{i \jb}.
\]
We use $\mathcal{O}_\mu$ to denote the linear space of holomorphic function of polynomial growth with order at most $\mu$, and $\mathcal{O}_\mu(\Lambda^{p,q})$ denote the linear space of holomorphic $(p,q)$-forms of polynomial growth with order at most $\mu$.

Let $\Delta^d = d d^* + d^* d$ be the standard Hodge Laplacian. Since $(M, g)$ is K\"ahler, holomorphic forms are naturally $\Delta^d$-harmonic. 

Let $dv_f = e^{-f }dv$ be a weighted measure on $(M,g)$, define $d^*_f$ to be the $L^2$-dual of $d$ with respect to $dv_f$, i.e.
\[
\int \langle d \w ,\eta \rangle dv_f = \int \langle \w, d_f^* \eta \rangle dv_f 
\]
for any compactly supported smooth $(p-1)$-form $\w$ and $p$-form  $\eta$. Define the weighted Hodge Laplacian by $\Delta^d_ f = d d^*_f + d^*_f d$. This operator was studied in \cite{MW2015}, where it was shown that  
\[
\Delta^d_f = \Delta^d + \mathcal{L}_{\n f},
\]
and $\Delta^d_ f $ preserves the type of $(p, q)$-forms if and only if $\n f$ is a real holomorphic vector field, i.e. the $(1, 0)$ part of $\n f$ is a holomorphic vector field, which naturally holds on a gradient K\"ahler Ricci shrinker. 

Let $\tau = -t \in (0, \infty)$, let $\Phi_t$ be the $1$-parameter family of diffeomorphism generated by
\[
\partial_t \Phi_t(x) = \frac{1}{\tau} \n f \circ \Phi_t(x), \quad t \in (-\infty, 0), \quad with \quad \Phi_{-1} = id.
\]
Since $\n f$ is a real holomorphic vector field, $\Phi_t$ is a family of biholomorphism on $M$. 

\begin{lem}
The function
\[
\hat{\w} (x, s) := (\Phi^{-1}_{-e^{-s}})^*\w(x) , \quad (x, s) \in M \times (-\infty, \infty)
\] 
is an ancient solution of the heat type equation
\begin{equation}\label{eqn: heat equation for the f Hodge Laplacian}
\pd_s \hat {\w} = - \Delta_f^d \hat{\w},
\end{equation}
and the map sending $\w$ to $\hat{\w}$ is a linear injection from the space of holomorphic $(p, 0)$ forms to the space of $(p, 0)$-forms satisfying (\ref{eqn: heat equation for the f Hodge Laplacian}) for $s\in (-\infty, \infty)$. 
\end{lem}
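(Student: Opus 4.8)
The plan is to mimic the proof of Lemma~\ref{lem: transform to f-caloric function}, with the simplification that here $\w$ is time-independent, and the key extra input that a holomorphic $(p,0)$-form is $\Delta^d$-harmonic on a K\"ahler manifold. Write $\tau = -t = e^{-s}$ and $\Psi_t := \Phi_t^{-1}$, and recall that $\Phi_t$ is the flow of the time-dependent vector field $Y_t := \tfrac{1}{\tau}\n f$. First I would establish the flow identity
\[
\pd_t\left( \Psi_t^*\w \right) \;=\; -\,\mathcal{L}_{Y_t}\left( \Psi_t^*\w \right) \;=\; -\,\tfrac{1}{\tau}\,\mathcal{L}_{\n f}\hat\w ,
\]
which is the $(p,0)$-form analogue of the chain-rule term $-\tfrac{1}{\tau}\langle \n f, \n \tilde u\rangle$ appearing in the proof of Lemma~\ref{lem: transform to f-caloric function}; it follows by differentiating $\Psi_t^*(\Phi_t^*\w) = \w$ in $t$, using $\pd_t(\Phi_t^*\w) = \Phi_t^*\mathcal{L}_{Y_t}\w$ together with naturality of the Lie derivative. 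Since $t = -e^{-s}$ gives $\tfrac{dt}{ds} = \tau$, this yields $\pd_s\hat\w = -\mathcal{L}_{\n f}\hat\w$.

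Next I would observe that for each fixed $s$ the form $\hat\w(\cdot,s)$ is again a holomorphic $(p,0)$-form: $\Psi_t = \Phi_t^{-1}$ is a biholomorphism of $(M,J)$ because $\n f$ is a real holomorphic vector field, and the pullback of a holomorphic $(p,0)$-form under a biholomorphism is again a holomorphic $(p,0)$-form. As $(M,g)$ is K\"ahler, $\hat\w(\cdot,s)$ is therefore $\Delta^d$-harmonic, i.e.\ $\Delta^d\hat\w = 0$. Combining this with the identity $\Delta_f^d = \Delta^d + \mathcal{L}_{\n f}$ from \cite{MW2015} gives $\Delta_f^d\hat\w = \mathcal{L}_{\n f}\hat\w$, and hence
\[
\pd_s\hat\w \;=\; -\,\mathcal{L}_{\n f}\hat\w \;=\; -\,\Delta_f^d\hat\w
\]
for all $s\in(-\infty,\infty)$, which is equation~(\ref{eqn: heat equation for the f Hodge Laplacian}).

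It then remains to note that $\w \mapsto \hat\w$ is $\mathbb{C}$-linear (immediate from the definition), that its image consists of $(p,0)$-forms (by the type-preservation just used), and that it is injective: evaluating at $s = 0$, where $t = -1$ and $\Phi_{-1} = \mathrm{id}$, gives $\hat\w(\cdot,0) = \w$, so $\hat\w \equiv 0$ forces $\w \equiv 0$.

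I expect the only delicate point to be a clean justification of the flow identity in the first step — pinning down the sign and the factor $\tfrac{1}{\tau}$ in $\pd_t(\Phi_t^{-1})^*\w$, which can be done either by the two-parameter-flow description of $\Phi_t$ or by differentiating $\Psi_t^*(\Phi_t^*\w)=\w$ as above. Everything past that is formal: the heat-type equation is, in effect, just the statement $\Delta^d\hat\w = 0$ repackaged through $\Delta_f^d = \Delta^d + \mathcal{L}_{\n f}$, and no completeness or growth assumption on $\w$ enters here.
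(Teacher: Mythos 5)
Your proposal is correct and follows essentially the same route as the paper: both reduce $\pd_s\hat\w$ to $-\mathcal{L}_{\n f}\hat\w$ via the flow of $\tfrac{1}{\tau}\n f$ under the reparametrization $t=-e^{-s}$, use that $\Phi_t^{-1}$ is a biholomorphism so $\hat\w(\cdot,s)$ stays a holomorphic $(p,0)$-form and is hence $\Delta^d$-harmonic on the K\"ahler manifold, and then invoke $\Delta_f^d=\Delta^d+\mathcal{L}_{\n f}$ from \cite{MW2015}; injectivity via $\Phi_{-1}=\mathrm{id}$ at $s=0$ is exactly the paper's (implicit) argument. The sign and factor in your flow identity are right, so there is no gap.
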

\begin{proof}
Let $\w$ be a holomorphic $(p, 0)$ form.
Let
\[
g(t) = \tau(t) \Phi_t^* g
\]
be the canonical ancient solution of K\"ahler Ricci flow associated to $(M, g, f)$.

Let $t = - e^{-s}$ for $s \in (-\infty, \infty)$. Then $\hat{\w} (x, s) = (\Phi^{-1}_{t})^*\w(x) $ is a family of holomorphic $(p,0)$-forms. Let $\Delta^d_{g(t)}$ be the Hodge Laplacian w.r.t. the metric $g(t)$. Since $g(t)$ is K\"ahler,
\[
\Delta^d_{g(t)} \hat{\w} = \Delta^{\bar{\pd}}_{g(t)} \hat{\w} = 0,
\]
hence
\[
\pd_s \hat{\w} = e^{-s} \mathcal{L}_{- \frac{1}{\tau} \n f} \hat{\w} = -\Delta^d_f \hat{\w}.
\]
It is easy to see that the map sending $\w$ to $\hat{\w}$ is linear and injective. 
\end{proof}

\begin{lem}\label{lem: spectrum of f-Hodge Laplacian}
 Let $(M, g, dv_f = e^{-f} dv_g)$ be a smooth metric measure space with positive Bakry-Emery Ricci tensor
 \[ Ric + \n\n f \geq a g,\]
 where $a > 0$, and suppose \textcolor{black}{(M, g) has bounded curvature}. Then the $f$-Hodge Laplacian has discrete eigenvalues with finite multiplicity, whose eigen-forms give a complete orthogonal basis for the space $L^2(dv_f)$-forms of the same type.
\end{lem}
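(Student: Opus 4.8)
The plan is to realize $\Delta_f^d = dd_f^* + d_f^*d$ as a nonnegative self-adjoint operator on the Hilbert space of $L^2(dv_f)$ $p$-forms — the Friedrichs extension of $\Delta_f^d$ acting on smooth compactly supported $p$-forms — and to show that its resolvent $(\Delta_f^d+1)^{-1}$ is compact. The spectral theorem then yields a discrete spectrum, finite multiplicities, and an $L^2(dv_f)$-orthonormal basis of eigen-$p$-forms; since $\n f$ is a real holomorphic vector field, $\Delta_f^d = \Delta^d + \mathcal{L}_{\n f}$ preserves the (bi)degree of forms, so for each type the eigenforms of that type already span the corresponding $L^2(dv_f)$-subspace, which is the ``same type'' assertion.

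Compactness of the resolvent is equivalent to a weighted Rellich property: every sequence of smooth compactly supported $p$-forms $\{\w_k\}$ with $\|\w_k\|_{L^2(dv_f)}\le 1$ and $\langle\Delta_f^d\w_k,\w_k\rangle_{L^2(dv_f)}\le 1$ has an $L^2(dv_f)$-convergent subsequence. The main ingredient is the weighted Weitzenb\"ock formula
\[
\Delta_f^d\w = \n_f^*\n\,\w + \mathcal{W}(\w),
\]
where $\n_f^*\n = \n^*\n + \n_{\n f}$ is the $f$-connection Laplacian (nonnegative and self-adjoint for $dv_f$) and $\mathcal{W}$ is a pointwise symmetric endomorphism of $\Lambda^p$ assembled from the Riemann curvature and from $\n\n f$. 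On $1$-forms this is exactly $\mathcal{W} = \mathrm{Ric}+\n\n f \ge a\,\mathrm{Id}$; for general $p$ one writes $\mathcal{W}$ as the unweighted Weitzenb\"ock curvature term plus an expression algebraic in $\n\n f$, the latter bounded by hypothesis. Assuming the lower bound $\mathcal{W}\ge -C\,\mathrm{Id}$ and integrating by parts gives
\[
\int_M |\n\w_k|^2\,dv_f \;\le\; \langle\Delta_f^d\w_k,\w_k\rangle_{L^2(dv_f)} + C\|\w_k\|_{L^2(dv_f)}^2 \;\le\; 1+C,
\]
so $\{\w_k\}$ is bounded in the weighted Sobolev space $W^{1,2}(M,\Lambda^p,dv_f)$.

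To convert this into $L^2(dv_f)$-precompactness, recall that $\mathrm{Ric}+\n\n f\ge a g$ with $a>0$ forces $(M,g,dv_f)$ to have finite $dv_f$-mass and to satisfy a Bakry-Emery log-Sobolev inequality, hence a weighted Poincar\'e inequality; in particular the scalar inclusion $W^{1,2}(M,dv_f)\hookrightarrow L^2(M,dv_f)$ is compact, exactly as in the function case treated in \cite{HN2014} and \cite{CZ2016}. Covering $M$ by balls of controlled geometry (using the Ricci lower bound, and, in the intended applications, the curvature bound, together with noncollapsing) and expanding $\w_k$ in a local orthonormal coframe on each ball, the derivatives of the frame contribute only zeroth-order terms controlled by the curvature, so componentwise application of the scalar compactness plus a diagonal argument extracts an $L^2(dv_f)$-convergent subsequence. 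This shows $(\Delta_f^d+1)^{-1}$ is compact and finishes the proof.

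The step I expect to be the crux is the lower bound $\mathcal{W}\ge -C\,\mathrm{Id}$ on $p$-forms with $p\ge 2$: for $p=1$ it is literally the hypothesis $\mathrm{Ric}+\n\n f\ge ag$, but for higher degree one must control the genuine curvature contribution to the Weitzenb\"ock operator, which is precisely where $\sup_M|\n\n f|<\infty$ — equivalently a two-sided Ricci bound in the shrinker setting — and, for the applications of the lemma, a bound on the full curvature tensor are used. The remaining arguments are a routine transplant of the scalar spectral theory for $\Delta_f$ to forms.
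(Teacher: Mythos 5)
Your overall architecture (Friedrichs extension, compact resolvent via a weighted Rellich property, spectral theorem, type preservation from $\n f$ being real holomorphic) is sound, but the step you yourself identify as the crux is a genuine gap: the lower bound $\mathcal{W}\ge -C\,\mathrm{Id}$ on the Weitzenb\"ock term for $p$-forms with $p\ge 2$ is not available under the hypotheses of the lemma. The lemma assumes only $Ric+\n\n f\ge a g$ and $\sup_M|\n\n f|<\infty$; in the shrinker setting this amounts to a two-sided \emph{Ricci} bound, whereas the Weitzenb\"ock curvature term on $p$-forms, $p\ge 2$, involves the full curvature operator, which these hypotheses do not control. Nor can you import the missing bound from ``the applications'': Proposition \ref{series expansion for solutions of the f heat equation of forms} and Theorem \ref{thm: estimate for the dimension of holomorphic forms}(a) assume only bounded Ricci curvature (the full curvature bound appears only in part (b)), so a proof that needs $\sup_M|Rm|<\infty$ proves a strictly weaker statement than what the paper uses. (In the K\"ahler case one could try to rescue your route for $(p,0)$-forms via the Bochner--Kodaira formula, where the curvature of $\Lambda^{p,0}$ enters only through Ricci, but that is a different argument and still would not cover the lemma in its stated generality for a smooth metric measure space.)

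The paper's proof avoids Weitzenb\"ock entirely and runs the estimate in the opposite direction: it carries the scalar compact embedding $W^{1,2}(dv_f)\hookrightarrow L^2(dv_f)$ (from the Bakry--Emery log-Sobolev inequality) over to forms, and the only point it verifies is that for $\w\in W^{1,2}(dv_f;\Lambda^p)$ the quadratic form of $\Delta^d_f$ is finite, i.e.\ $d^*_f\w=d^*\w+i_{\n f}\w\in L^2(dv_f)$; the unbounded term $i_{\n f}\w$ is controlled by an integration by parts with a cut-off, using exactly $\sup_M|\n\n f|<\infty$, with no curvature-operator input. So the hypothesis $\sup_M|\n\n f|<\infty$ is used to bound the first-order drift term $i_{\n f}$, not to bound a zeroth-order curvature endomorphism. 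If you want to keep your compact-resolvent framing, you would need to replace your Weitzenb\"ock step by an argument of this kind (or otherwise relate the form domain of $\Delta_f^d$ to $W^{1,2}(dv_f;\Lambda^p)$ without invoking bounds on $Rm$); as written, your proof does not establish the lemma under its stated assumptions.
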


\begin{proof}
Since $Ric + \n\n f \geq a g$, we have the well-known Bakry-Emery log-Sobolev inequality 
\[
\int_M u^2 \ln u^2  e^{ -f} dv\leq \frac{2}{a} \int _M |\n u|^2 e^{-f} dv,
\]
for $\int_M u^2 e^{-f} dv = 1$.

Using this log-Sobolev inequality, one can show that $W^{1,2}(dv_f)$ is compactly embedded in $L^2(dv_f)$ (see the appendix of \cite{CZ2016}), hence the $f$-Laplacian densely defined on $L^2(dv_f)$ functions has discrete spectrum, and its (normalized) eigenfunctions consists of a complete orthonormal basis (\cite{HN2014}). 

\textcolor{black}{
The same argument shows that $W^{1,2}(dv_f; \Lambda^p)$ is compactly embedded $L^2(dv_f; \Lambda^p)$. Suppose $(1+\Delta^d_f) \alpha = \beta$, using the Weitzenb\"ock formula $\Delta^d_f = -\Delta + \n_{\n f} + \mathcal{R}$ (see Lemma 3.1 in \cite{HeOu2025}), where $\Delta$ is the rough Laplacian on $\mathcal{R}$ is the curvature term, integration by parts with a cut-off function $\psi$ shows that 
\[
\int \psi^2 |\alpha|^2 + \psi^2 |\n \alpha|^2 + 2\psi \langle \n \alpha, \n \psi \otimes \alpha \rangle + \psi^2\langle \mathcal{R}(\alpha), \alpha\rangle  dv_f = \int \psi^2 \langle \beta, \alpha\rangle dv_f. 
\]
Then, since the curvature is bounded by assumption, use Caucy-Schwarz inequality, and let $\psi \to 1$ yields that $\|\alpha\|_{W^{1,2}(dv_f)} \leq C \|\beta\|_{L^2(dv_f)}$, where $C$ is a constant independent of $\beta$.
 Consequently $(1+\Delta^d_f)^{-1}$ is a compact operator and this finishes the proof.
}

\end{proof}

\begin{prop}\label{series expansion for solutions of the f heat equation of forms}
Let $(M, g, f)$ be a gradient K\"ahler Ricci shrinker with \textcolor{black}{bounded curvature}. There is a constant $\Lambda = \sup_M|Ric| + \frac{1}{2}$, such that for any  $\w \in \mathcal{O_\mu}(\Lambda^{p,0})$, there exist finitely many coefficients $a_i , i = 1,2,...$ such that 
\[
\hat{\w} = \sum_{\lambda_i \leq \mu/2 + p \Lambda} a_i e^{-\lambda_i s} \theta_i,
\]
where $\lambda_i$, $\theta_i$, $i = 1,2,...$ are the eigenvalues and $(p,0)$-eigenforms of $\Delta^d_f$. 
\end{prop}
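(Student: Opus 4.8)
The plan is to follow the template established for holomorphic functions (Corollary \ref{cor: dimension of holomorphic function is controlled by counting eigenvalues} and the discussion preceding it), but keeping track of the curvature contribution to the growth estimate. First I would take $\w \in \mathcal{O}_\mu(\Lambda^{p,0})$ and form $\hat{\w}(x,s) = (\Phi^{-1}_{-e^{-s}})^*\w$, which by the preceding Lemma solves the backward $f$-Hodge heat equation $\pd_s \hat{\w} = -\Delta_f^d \hat{\w}$ for all $s\in(-\infty,\infty)$. By Lemma \ref{lem: spectrum of f-Hodge Laplacian} (whose hypotheses hold here: $Ric+\n\n f = \tfrac12 g$ and $\sup_M|\n\n f| = \sup_M|\tfrac12 g - Ric| < \infty$ by the bounded Ricci assumption), the operator $\Delta_f^d$ on $(p,0)$-forms has discrete spectrum with finite multiplicities and a complete $L^2(dv_f;\Lambda^{p,0})$-orthonormal eigenbasis $\{\theta_i\}$ with eigenvalues $\lambda_i$. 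Expanding the initial data $\hat{\w}(\cdot,0) = \sum_i a_i \theta_i$ in $L^2(dv_f)$ and using uniqueness of $L^2$ solutions of the Cauchy problem for the heat equation gives $\hat{\w}(x,s) = \sum_i a_i e^{-\lambda_i s}\theta_i$, valid for all $s$; the point is then to show that only the terms with $\lambda_i \le \mu/2 + p\Lambda$ can appear.

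The key estimate is to convert the polynomial growth bound $|\w|_g \le C(1+r)^\mu$ into an exponential-in-$s$ bound on $\|\hat{\w}(\cdot,s)\|_{L^2(dv_f)}$. Here one must be careful: $\hat{\w}$ is a $(p,0)$-form, so pulling back by $\Phi_t$ introduces a factor from the differential of $\Phi_t$ acting on $p$ cotangent directions. Since $\Phi_t$ is generated by $\tfrac1\tau\n f$, the Lie derivative of the metric along the flow is controlled by $\n\n f = \tfrac12 g - Ric$, so $|\pd_t(\Phi_t^* g)| \le \tfrac1\tau(\tfrac12 + \sup_M|Ric|)\Phi_t^*g = \tfrac{\Lambda}{\tau}\Phi_t^* g$; integrating this ODE for the pulled-back metric from time $-1$ shows that $\Phi_t^*$ distorts the pointwise norm of a $(p,0)$-form by at most $\tau^{\,p\Lambda}$ (up to constants). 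Combining this with the distance-control inequality (\ref{eqn: control of distance}) — which gives $r(\Phi_t^{-1}(x)) + c(n) \le \sqrt{\tau}(r(x)+c(n))$ — and with $\tau = e^{-s}$, one obtains $|\hat{\w}(x,s)|_g \le C\tau^{\,p\Lambda + \mu/2}(r(x)+c(n))^\mu$. Squaring, integrating against $dv_f = e^{-f}dv$ (which is finite, with all polynomial moments finite since $f\sim \tfrac14 r^2$), and absorbing the constant yields $\|\hat{\w}(\cdot,s)\|_{L^2(dv_f)}^2 \le C e^{-2(\mu/2 + p\Lambda)s}$ for all $s$.

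With this bound in hand, the conclusion is standard: from the $L^2$ expansion, $\|\hat{\w}(\cdot,s)\|_{L^2(dv_f)}^2 = \sum_i a_i^2 e^{-2\lambda_i s}$, and if some $a_{i_0}\ne 0$ with $\lambda_{i_0} > \mu/2 + p\Lambda$, then sending $s\to -\infty$ makes the term $a_{i_0}^2 e^{-2\lambda_{i_0}s}$ grow strictly faster than the allowed bound $Ce^{-2(\mu/2+p\Lambda)s}$, a contradiction. Hence $a_i = 0$ whenever $\lambda_i > \mu/2 + p\Lambda$, and since there are only finitely many eigenvalues (with finite multiplicity) in $[0, \mu/2 + p\Lambda]$, only finitely many coefficients survive, giving $\hat{\w} = \sum_{\lambda_i \le \mu/2 + p\Lambda} a_i e^{-\lambda_i s}\theta_i$ as claimed.

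The main obstacle I anticipate is the bookkeeping in the second paragraph: carefully tracking how the diffeomorphisms $\Phi_t$ (which are biholomorphisms but not isometries) distort the pointwise norm of a $(p,0)$-form, and verifying that the distortion is exactly of order $\tau^{\,p\Lambda}$ with $\Lambda = \sup_M|Ric| + \tfrac12$ — this is where the bounded-Ricci hypothesis and the value of $\Lambda$ enter. The rest (finiteness of the weighted measure and its moments, uniqueness for the heat equation, and the $s\to -\infty$ contradiction argument) parallels the function case treated earlier and should be routine.
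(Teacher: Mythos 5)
Your proposal is correct and follows essentially the same route as the paper: reduce to the spectral expansion via uniqueness of $L^2(dv_f)$ solutions of the heat-type equation, bound the distortion of the pulled-back form by $\tau^{p\Lambda}$ using $|\n\n f|\leq \Lambda$, combine with the distance control (\ref{eqn: control of distance}) to get $|\hat{\w}(x,s)|\leq C e^{-s(\mu/2+p\Lambda)}(c(n)+r(x))^\mu$, and kill the high modes by letting $s\to-\infty$; the paper merely derives the $\tau^{p\Lambda}$ factor from the ODE for the holomorphic Jacobian $\pd\phi^i/\pd z^j$ rather than from the Lie derivative of the metric. Your intermediate metric inequality drops the factor $2$ in $\mathcal{L}_{\n f}g=2\n\n f$, but with it the metric distorts by $\tau^{2\Lambda}$, $d\phi$ by $\tau^{\Lambda}$, and a $(p,0)$-form by $\tau^{p\Lambda}$, so the final estimate you state is exactly right.
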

\begin{proof}
By \cite{MW2015}, $\n f$ being a real holomorphic vector field guarantees that $\Delta^d_f$ preserves the type of  $(p,q)$-forms. Since \textcolor{black}{the curvature} is bounded, by Lemma \ref{lem: spectrum of f-Hodge Laplacian}, the operator $\Delta^d_f$ applied on $L^2(dv_f)$ sections of $(p, 0)$-forms has discrete nonnegative spectrum $0 \leq \lambda_1 < \lambda_2 <  ...$, and the normalized eigenforms provide a complete orthonormal basis, which we denote as $\theta_1, \theta_2, ...$ 

Since $L^2(dv_f)$ solutions to the heat type equation
\[
\pd_s \w = - \Delta^d_f \w
\]
are uniquely determined by the initial data, we can write
\[
\hat{\w} = \sum_{i =1}^\infty a_i e^{-\lambda_i s} \theta_i,
\]
for a sequence of coefficients $a_i$.

Now suppose $\w$ has polynomial growth of order
\[
|\w(x)| \leq C (1+r(x))^\mu,
\]
in local coordinates
\[
\w = \frac{1}{p!} \w_{i_1 i_2 ... i_p} dz^{i_1} \wedge dz^{i_2} \wedge ... \wedge dz^{i_p},
\]
where $|\w_{i_1 i_2 ... i_p}(x)| \leq C (1+r(x))^\mu$.

For simplicity , denote $\phi = \Phi^{-1}_{t}$, then 
\[
\hat{\w}(x,s) = \frac{1}{p!} \w_{i_1 i_2 ... i_p}(\phi(x)) \frac{\pd \phi^{i_1}(x)}{\pd z^{j_1}} \frac{\pd \phi^{i_2}(x)}{\pd z^{j_2}} ... \frac{\pd \phi^{i_p}(x)}{\pd z^{j_p}} dz^{j_1} \wedge dz^{j_2} \wedge ... \wedge dz^{j_p}|_{\phi(x)}.
\]
Note that
\[
\frac{\pd}{\pd t} \phi^i(x) = - \frac{1}{\tau} f_{\ib}(\phi(x)),
\]
hence
\[
\frac{\pd}{\pd t} \frac{\pd \phi^i(x)}{\pd z^j} = - \frac{1}{\tau} f_{k \ib}(\phi(x)) \frac{\pd \phi^k }{\pd z^j}.
\]
Since we assume that $Ric$ is bounded uniformly, there is constant $\Lambda$ such that $|\n\n f| \leq \Lambda$, then the above equation implies that
\[
|\frac{\pd \phi^i(x)}{\pd z^j}| \leq \tau^\Lambda.
\]
By the above estimate and (\ref{eqn: control of distance}), we have
\[
|\hat{\w}(x, s)| \leq C e^{-s (\mu/2 + p \Lambda)}(c(n)+ r(o,x))^{\mu}
\]
for $s \in (-\infty, \infty)$, where $o$ is a minimal point of $f$, hence $\hat{\w}$ can be written as a finite linear combination

\[
\hat{\w} = \sum_{\lambda_i \leq \mu/2 + p \Lambda } a_i e^{-\lambda_i s} \theta_i.
\]
\end{proof}

Note that the map sending $\w$ to $\hat{\w}$ is injective, so the space of holomorphic $(p, 0)$-forms with polynomial growth of order at most $\mu$ is finite dimensional. So we have
\begin{cor}\label{cor: control dimension of holomorphic forms by spectrum}
Under the same assumptions as in Proposition \ref{series expansion for solutions of the f heat equation of forms}, we have 
\[
\dim_\mathbb{C} \mathcal{O}_\mu (\Lambda^{p,0}) \leq \sharp\{ 0 \leq \lambda(\Delta^d_f) \leq \frac{\mu}{2} + p \Lambda , \text{counting multiplicity}\}.
\]
\end{cor}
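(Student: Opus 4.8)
The plan is to read the estimate off directly from Proposition~\ref{series expansion for solutions of the f heat equation of forms}, using the observation that the time-zero slice of $\hat\w$ is $\w$ itself, so that $\mathcal{O}_\mu(\Lambda^{p,0})$ sits, quite literally, inside a concrete finite-dimensional eigenspace of $\Delta^d_f$. This makes the injectivity of $\w\mapsto\hat\w$ recorded before the statement even a bit superfluous: we will get an honest inclusion of subspaces of $L^2(dv_f;\Lambda^{p,0})$.

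First I would check that polynomial-growth $(p,0)$-forms are admissible inputs for Proposition~\ref{series expansion for solutions of the f heat equation of forms}: since $dv_f=e^{-f}dv$ is a finite measure and $f$ grows quadratically by \eqref{eqn: estimate of the potential function}, every $\w\in\mathcal{O}_\mu(\Lambda^{p,0})$ lies in $L^2(dv_f;\Lambda^{p,0})$, so $\hat\w$ is an $L^2(dv_f)$ solution of \eqref{eqn: heat equation for the f Hodge Laplacian} with the growth bound proved there. Consequently
\[
\hat\w(\cdot,s)=\sum_{\lambda_i\leq \mu/2+p\Lambda} a_i e^{-\lambda_i s}\theta_i
\]
for some coefficients $a_i$, where $\{\theta_i\}$ are the normalized $(p,0)$-eigenforms of $\Delta^d_f$ and $\lambda_i$ the corresponding eigenvalues. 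Evaluating at $s=0$ and using $\Phi_{-1}=\mathrm{id}$, so that $\hat\w(\cdot,0)=(\Phi_{-1}^{-1})^*\w=\w$, yields $\w=\sum_{\lambda_i\leq \mu/2+p\Lambda}a_i\theta_i$; hence every element of $\mathcal{O}_\mu(\Lambda^{p,0})$ lies in the linear span $E_\mu:=\operatorname{span}\{\theta_i:\lambda_i\leq \mu/2+p\Lambda\}$.

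It then remains only to identify $\dim_{\mathbb C}E_\mu$. By Lemma~\ref{lem: spectrum of f-Hodge Laplacian} the spectrum of $\Delta^d_f$ on $(p,0)$-forms is discrete with finite multiplicities, so $E_\mu$ is finite-dimensional with $\dim_{\mathbb C}E_\mu=\sharp\{0\leq\lambda(\Delta^d_f)\leq \mu/2+p\Lambda\}$ counted with multiplicity; combined with $\mathcal{O}_\mu(\Lambda^{p,0})\subseteq E_\mu$ this gives the claimed inequality. I do not expect a genuine obstacle here: all the analytic substance is already contained in Lemma~\ref{lem: spectrum of f-Hodge Laplacian} and Proposition~\ref{series expansion for solutions of the f heat equation of forms}, and the only points that deserve a line of justification are the $L^2(dv_f)$-membership of polynomial-growth forms and the identity $\hat\w(\cdot,0)=\w$.
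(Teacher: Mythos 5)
Your proposal is correct and is essentially the paper's own argument: both deduce the bound directly from Proposition~\ref{series expansion for solutions of the f heat equation of forms} together with the discreteness and finite multiplicity of the spectrum from Lemma~\ref{lem: spectrum of f-Hodge Laplacian}. Your evaluation of $\hat\w$ at $s=0$ (using $\Phi_{-1}=\mathrm{id}$) is just a slightly more explicit way of phrasing the injectivity of $\w\mapsto\hat\w$ that the paper invokes, yielding the same conclusion that $\mathcal{O}_\mu(\Lambda^{p,0})$ embeds in the span of the eigenforms with $\lambda\leq\frac{\mu}{2}+p\Lambda$.
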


Next we show that dimension estimate for polynomial growth holomorphic forms can be reduced to that of holomorphic functions. 
\begin{lem}\label{lem: injectivity of interior product}
On a gradient shrinking K\"ahler Ricci soliton $(M, g, f)$ with real dimension $n=2m$, the interior product $i_{\n f}$ defines a linear map
\[
i_{\n f}:  \mathcal{O_\mu}(\Lambda^{p,0}) \to \mathcal{O}_{\mu + 1}(\Lambda^{p-1,0})  
\]
for each $p = 1,2,...$. Moreover, suppose the curvature is bounded and let $\Lambda = \sup_M |Rm| + \frac{1}{2}$, then we have 
\[
\dim Ker( i_{\n f}|_{\mathcal{O}_\mu(\Lambda^{p,0}) } ) \leq e^{c(n,p, \Lambda)(1+\mu)}.
\]
\end{lem}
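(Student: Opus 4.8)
The plan is to prove the two assertions in turn. For the first, recall that on a K\"ahler Ricci shrinker the identity $f_{ij}=f_{\bar i\bar j}=0$ is equivalent to saying that the $(1,0)$-part $X:=(\nabla f)^{1,0}$ of $\nabla f$ is a holomorphic vector field (equivalently, $\nabla f$ is the real part of a holomorphic vector field, as recalled in \S2). Since $i_{\bar X}$ annihilates $(p,0)$-forms we have $i_{\nabla f}\omega=i_X\omega$, whose components in local holomorphic coordinates are holomorphic combinations of the holomorphic functions $X^{i}$ and $\omega_{j_1\cdots j_p}$; hence $i_{\nabla f}\omega$ is a holomorphic $(p-1,0)$-form. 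For the growth, $|\nabla f|=\sqrt{f-S}\le\sqrt f\le\tfrac12(r+c_2)$ by (\ref{eqn: estimate of the potential function}) together with $S>0$, so $|i_{\nabla f}\omega|\le|\nabla f|\,|\omega|\le C(1+r)^{\mu+1}$, i.e. $i_{\nabla f}\omega\in\mathcal{O}_{\mu+1}(\Lambda^{p-1,0})$. This gives the first part.

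For the kernel estimate, the first step reduces everything to a spectral counting bound. By Proposition \ref{series expansion for solutions of the f heat equation of forms} (whose bounded-Ricci hypothesis is implied by the bounded-curvature assumption) every $\omega\in\mathcal{O}_\mu(\Lambda^{p,0})$ lies in the span of the $(p,0)$-eigenforms of $\Delta^d_f$ with eigenvalue at most $\tfrac\mu2+p\Lambda$; in fact, arguing as in the proof of Corollary \ref{cor: decomposition of a holomorphic function} (using that on holomorphic $(p,0)$-forms $\Delta^d_f=\mathcal{L}_{\nabla f}$ preserves holomorphicity and type, and iterating $\tilde\lambda_s^{-1}\mathcal{L}_{\nabla f}$ to extract the top eigencomponent) one may take these eigenforms holomorphic, and since $[\mathcal{L}_{\nabla f},i_{\nabla f}]=i_{[\nabla f,\nabla f]}=0$, the operator $i_{\nabla f}$ carries each such eigenform to a holomorphic eigenform with the same eigenvalue (or to $0$), so $\omega\in\ker i_{\nabla f}$ exactly when all its eigencomponents are. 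Either way,
\[
\dim\ker\bigl(i_{\nabla f}|_{\mathcal{O}_\mu(\Lambda^{p,0})}\bigr)\ \le\ \dim_{\mathbb C}\mathcal{O}_\mu(\Lambda^{p,0})\ \le\ N^{(p,0)}\!\Bigl(\tfrac\mu2+p\Lambda\Bigr),
\]
where $N^{(p,0)}(T)$ counts, with multiplicity, the eigenvalues of $\Delta^d_f$ on $(p,0)$-forms that are $\le T$ (the second inequality being Corollary \ref{cor: control dimension of holomorphic forms by spectrum}). So it suffices to prove $N^{(p,0)}(T)\le e^{c(n,p,\Lambda)(1+T)}$.

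To obtain that counting bound I would use the Bakry--\'Emery log-Sobolev inequality for $\mu_f$ (available since $Ric+\nabla\nabla f=\tfrac12 g$, and already invoked in Lemma \ref{lem: spectrum of f-Hodge Laplacian}) together with the bounded curvature hypothesis. The Bochner--Weitzenb\"ock formula writes $\Delta^d_f$ on $(p,0)$-forms as a weighted rough Laplacian plus a zeroth-order curvature operator, the latter bounded by some $c(p,\Lambda)$ under bounded curvature, so $\langle\Delta^d_f\omega,\omega\rangle_{L^2(\mu_f)}\ge\int|\nabla\omega|^2\,dv_f-c\int|\omega|^2\,dv_f$; combining with Kato's inequality $|\nabla|\omega||\le|\nabla\omega|$ and the scalar log-Sobolev inequality yields a defective log-Sobolev inequality for $(p,0)$-forms. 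The standard heat-semigroup machinery then bounds $\operatorname{Tr}\bigl(e^{-\Delta^d_f}|_{(p,0)}\bigr)$ by a constant $e^{c(n,p,\Lambda)}$ (here $\mu_f(M)<\infty$ is used), and hence $N^{(p,0)}(T)\le e^{T}\operatorname{Tr}\bigl(e^{-\Delta^d_f}|_{(p,0)}\bigr)\le e^{c(n,p,\Lambda)(1+T)}$. Taking $T=\tfrac\mu2+p\Lambda$ finishes the lemma.

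The hard part is this last counting estimate. In the scalar setting discreteness of the spectrum already followed from the log-Sobolev inequality, but for forms one must absorb the Weitzenb\"ock curvature term, which is exactly why bounded curvature rather than merely bounded Ricci is assumed here, and one must check that the form-valued (defective) log-Sobolev inequality that results is strong enough to run the trace estimate; it is also this step that forces an exponential, rather than polynomial, dependence on $\mu$ in Theorem \ref{thm: estimate for the dimension of holomorphic forms}(b). An alternative route to the kernel bound would be to run a frequency-function argument as in Lemma \ref{lem: MW} on the chain $\omega,\ \mathcal{L}_{\nabla f}\omega,\ \mathcal{L}_{\nabla f}^2\omega,\dots$ (which stays inside $\ker i_{\nabla f}$) followed by a mean-value and dimension-counting argument as in the proof of Theorem \ref{thm: effective dimension estimate for holomorphic functions}, but the same quantitative difficulty reappears there.
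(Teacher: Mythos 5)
The first half of your argument (holomorphicity of $i_{\n f}\w$ via $f_{\ib\jb}=0$ and the growth bound from $|\n f|\le\sqrt f$) is correct and is essentially the paper's computation. The problem is the kernel estimate. You throw away the hypothesis $i_{\n f}\w=0$ entirely, bounding $\dim\ker$ by $\dim_{\mathbb C}\mathcal O_\mu(\Lambda^{p,0})$ and then by the eigenvalue counting function $N^{(p,0)}(\tfrac\mu2+p\Lambda)$, and the whole weight of the proof falls on the claim $N^{(p,0)}(T)\le e^{c(n,p,\Lambda)(1+T)}$, which you assert follows from a defective log-Sobolev inequality by ``standard heat-semigroup machinery.'' That step is not justified. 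A log-Sobolev inequality with a fixed constant is equivalent to hypercontractivity of $e^{-t\Delta_f}$; it does not give ultracontractivity, on-diagonal heat-kernel bounds, or a trace-class bound $\operatorname{Tr}(e^{-\Delta^d_f})\le e^{c}$ — even in the model Gaussian case the finiteness of the trace comes from the explicit Hermite spectrum, not from the log-Sobolev constant, and the Ornstein--Uhlenbeck kernel is unbounded on the diagonal relative to the weighted measure. No explicit bound on the counting function of $\Delta_f$ (or $\Delta^d_f$) on a general shrinker is available in this paper or its references; indeed, if your claimed bound were standard, Theorem \ref{thm: main theorem 1}(a) would immediately give $\dim\mathcal O_d\le e^{c(1+d)}$ and Theorem \ref{thm: estimate for the dimension of holomorphic forms}(b) (the reduction of forms to functions, which is exactly what this lemma is for) would be pointless. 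So the ``hard part'' you flag at the end is a genuine gap, not a routine verification.

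For comparison, the paper's proof uses the kernel condition in an essential way. If $i_{\n f}\w=0$, Cartan's formula gives $\Delta^d_f\w=i_{\n f}d\w$, hence $\la\n|\w|^2,\n f\ra=2\,\mathrm{Re}\,\la\Delta^d_f\w,\w\ra$; integrating against $dv_f$, using the finite spectral expansion of Proposition \ref{series expansion for solutions of the f heat equation of forms} on one side and $\int\la\n|\w|^2,\n f\ra\,dv_f=\int|\w|^2(f-\tfrac n2)\,dv_f$ on the other, one gets
\[
\int_M|\w|^2\Bigl(f-\tfrac n2-\mu-2p\Lambda\Bigr)dv_f\le 0 .
\]
This forces the $L^2$-mass of any kernel element to concentrate on $\{f<t_0\}$ with $t_0\sim\mu$, i.e.\ a doubling inequality $\int_{B_o(2r_0)}|\w|^2\le e^{c(1+\mu)}\int_{B_o(r_0)}|\w|^2$ with $r_0\sim\sqrt\mu$. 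Combined with a mean value inequality from Moser iteration (this is where bounded curvature and the Weitzenb\"ock formula enter) and Li's dimension-counting argument on the fixed ball, this yields $\dim\ker\le e^{c(n,p,\Lambda)(1+\mu)}$ without any control on the global eigenvalue counting function. If you want to salvage your route, you would have to actually prove a counting-function bound for $\Delta^d_f$, which is a substantially stronger statement than the lemma itself.
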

\begin{proof}
Let $\w \in \mathcal{O_\mu}(\Lambda^{p,0}M)$, locally we can write
\[
\w = \frac{1}{p!} \w_{i_1 i_2... i_p} dz^{i_1} \wedge dz^{i_2} \wedge ... \wedge dz^{i_p},
\]
and 
\[
i_{\n f}\w = \frac{1}{(p-1)!} f_{\bar{i_1}}  \w_{i_1 i_2... i_p} dz^{i_2}\wedge ... \wedge dz^{i_p}.
\]
By direct calculation
\[
\n_{\kb} (f_{\bar{i_1}}  \w_{i_1 i_2... i_p}) =  f_{\bar{i_1} \kb}  \w_{i_1 i_2... i_p} + f_{\bar{i_1}}  \n_{\kb} \w_{i_1 i_2... i_p} = 0.
\]
Since $|\n f| \leq \frac{1}{2} r + c(n)$, we have 
\[
|i_{\n f} \w| \leq C (1+ r)^{\mu + 1}
\]
for some constant $C$. Thus $i_{\n f} \w \in \mathcal{O}_{\mu + 1}(\Lambda^{p-1,0}M)   $.

It remains to estimate the kernel of $i_{\n f}$ when the Ricci curvature is bounded uniformly. Let $\w \in \mathcal{O}_\mu(\Lambda^{p,0})$ such that $i_{\n f} \w = 0$. Then by $\Delta^d \w = \Delta^{\bar{\pd}} \w = 0$ and Cartan's formula, we have
\[
\Delta_f^d \w = \Delta^d \w + i_{\n f} d \w + d i_{\n f} \w = i_{\n f} d \w.
\]
Hence
\[
\begin{split}
\langle \n |\w|^2 , \n f\rangle = & \sum_k \sum_{i_1 < i_2 < ... < i_p} \n_k \w_{i_1 ... i_p} f_{\kb} \bar{\w}_{\ib_1 ... \ib_p} + \w_{i_1 ... i_p} \n_{\kb} \bar{\w}_{\ib_1...\ib_p}f_k\\
= & 2 Re \langle  i_{\n f} d\w, \w\rangle = 2 Re \langle \Delta_f^d \w, \w\rangle.  
\end{split}
\]
By Proposition \ref{series expansion for solutions of the f heat equation of forms}, there are constants $a_i$ such that 
\[
\w = \sum_{\lambda_i \leq \frac{\mu}{2} + p \Lambda} a_i \theta_i,
\]
where $\theta_i$ are the eigen-forms of $\Delta_f^d$, let $\theta_i$ be orthonormal to each other.
Thus
\[
\begin{split}
    \int_M \langle \n |\w|^2 , \n f\rangle dv_f = &\int_M 2 Re\langle \sum_{\lambda_i \leq \frac{\mu}{2} + p \Lambda} a_i \lambda_i \theta_i, \sum_{\lambda_i \leq \frac{\mu}{2} + p \Lambda} a_i \theta_i\rangle dv_f \\
    = & 2 \sum_{\lambda_i \leq \frac{\mu}{2} + p \Lambda} |a_i|^2 \lambda_i \\
    \leq &  2(\frac{\mu}{2} + p \Lambda) \int_M |\w|^2 dv_f.
\end{split}
\]
On the other hand, 
\[
\int \langle \n |\w|^2, \n f \rangle dv_f = \int |\w|^2 (|\n f|^2 - \Delta f) dv_f = \int |\w|^2 (f - \frac{n}{2}) dv_f.
\]
Therefore
\[
\int_M |\w|^2 \left( f - \frac{n}{2} - \mu - 2 p \Lambda \right) dv_f\leq 0.
\]
The rest of the argument is similar to the method for holomorphic functions in \cite{MW2014}.
Take $t_0 = \frac{n}{2}+ \mu + 2 p \Lambda + 1$ and $K = (\frac{n}{2}+ \mu + 2 p \Lambda ) e^{5t_0} $, the above inequality implies that
\[
\int_{ f < 5 t_0} |\w|^2 dv_g\leq K \int_{f < t_0} |\w|^2 dv_g. 
\]
By (\ref{eqn: estimate of the potential function}), we can choose $t_0$ larger by adding a constant depending on $n$, and take $r_0 = 2 \sqrt{t_0}$ such that $B_o(2 r_0) \subset \{f< 5 t_0\}$ and $\{f< t_0\} \subset B_o(r_0)$, where $o$ is a minimal point of $f$, hence we have
\begin{equation}\label{eqn: doubling for holomorphic form in the kernel of interior product}
\int_{ B_o(2r_0)} |\w|^2 dv_g\leq K \int_{B_o(r_0)} |\w|^2 dv_g. 
\end{equation}
The Sobolev constant on $B_o(2r_0)$ depends on $r_0$ and the Ricci lower bound in the form $r_0^2e^{c(n)\sqrt{\Lambda} r_0} / Vol(2r_0)^{2/n}$, we can apply the Moser iteration method to the differential inequality
\[
\Delta |\w|^2 \geq 2 |\n \w|^2 - C|\w|^2 , 
\]
where $C$ is a constant depending on $n,p$ and $\Lambda$, which can be derived from $\Delta^d \w = 0$ by the Weitzenb\"ock formula. Then we get a mean value inequality for $|\w|$ where the constant depends on the Sobolev constant and the curvature bound on $B_o(2r_0)$ (see for e.g. section 19 of \cite{Li2012}). Note that the volume doubling constant on $B_o(2r_0)$ is also an exponential function of $r_0$ since the curvature is bounded. Hence the method of \cite{Li1997} yields the estimate of the dimension, which is an exponential function of $r_0$ and can be written in the form
\[
dim Ker (i_{\n f}|_{\mathcal{O}_\mu(\Lambda^{p, 0}) } ) \leq e^{c(n,p,\Lambda)(1+ \mu)}.
\]
\end{proof}

\begin{prop}\label{prop: control dimension of forms by that of functions}
Let $(M, g, f)$ be a gradient K\"ahler Ricci shrinker with bounded curvature $|Rm| \leq \Lambda$, then there is a constant $C(n, p, \Lambda)$, such that 
\[
\dim  \mathcal{O_\mu}(\Lambda^{p,0})  \leq \dim \mathcal{O}_{\mu + p} + e^{C(n, p, \Lambda)(1+\mu)}. 
\]
\end{prop}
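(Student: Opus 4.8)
The plan is to iterate the interior product map $i_{\n f}$ introduced in Lemma~\ref{lem: injectivity of interior product}, peeling off one degree at a time until we reach holomorphic functions, and to control the dimension lost at each stage by the kernel estimate proved there.

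Concretely, I would set $V_0 := \mathcal{O}_\mu(\Lambda^{p,0})$ and define inductively $V_{k+1} := i_{\n f}(V_k)$ for $k = 0, 1, \dots, p-1$. Since Lemma~\ref{lem: injectivity of interior product} gives $i_{\n f}\colon \mathcal{O}_{\mu+k}(\Lambda^{p-k,0}) \to \mathcal{O}_{\mu+k+1}(\Lambda^{p-k-1,0})$, an easy induction shows $V_k \subseteq \mathcal{O}_{\mu+k}(\Lambda^{p-k,0})$; in particular $V_p \subseteq \mathcal{O}_{\mu+p}(\Lambda^{0,0}) = \mathcal{O}_{\mu+p}$. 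Applying rank--nullity to $i_{\n f}\colon V_k \to V_{k+1}$,
\[
\dim V_k = \dim V_{k+1} + \dim\bigl(V_k \cap \ker i_{\n f}\bigr) \le \dim V_{k+1} + \dim\ker\bigl(i_{\n f}|_{\mathcal{O}_{\mu+k}(\Lambda^{p-k,0})}\bigr) \le \dim V_{k+1} + e^{c(n,\,p-k,\,\Lambda)(1+\mu+k)},
\]
where the last inequality is exactly the kernel bound from Lemma~\ref{lem: injectivity of interior product} (the global hypothesis $|Rm| \le \Lambda$ supplies the assumptions needed at every stage).

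Summing over $k = 0, \dots, p-1$ telescopes to
\[
\dim \mathcal{O}_\mu(\Lambda^{p,0}) = \dim V_0 \le \dim \mathcal{O}_{\mu+p} + \sum_{k=0}^{p-1} e^{c(n,\,p-k,\,\Lambda)(1+\mu+k)}.
\]
To finish I would collapse the finite sum into a single exponential: writing $c_0 := \max_{1 \le j \le p} c(n,j,\Lambda)$ and using $1 + \mu + k \le (1+k)(1+\mu)$ together with $1+\mu \ge 1$, each summand is at most $e^{c_0(1+k)(1+\mu)} \le e^{c_0 p (1+\mu)}$, so the sum is at most $p\, e^{c_0 p(1+\mu)} \le e^{(c_0 p + \ln p)(1+\mu)}$, which has the asserted form with $C(n,p,\Lambda) = c_0 p + \ln p$.

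I do not anticipate a genuine obstacle here: the substantive content — that $i_{\n f}$ preserves holomorphicity at the cost of one unit of polynomial growth, and that its kernel on $\mathcal{O}_\mu(\Lambda^{p,0})$ has dimension at most $e^{c(n,p,\Lambda)(1+\mu)}$ — was already established in Lemma~\ref{lem: injectivity of interior product}. The only points needing a little care are tracking the growth orders through the $p$-fold iteration, and observing that restricting $i_{\n f}$ to the subspace $V_k$ can only shrink its kernel, so that the lemma's kernel estimate applies verbatim.
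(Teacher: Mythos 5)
Your proposal is correct and is essentially the paper's own argument: the paper proves this proposition by exactly the induction on $p$ via Lemma \ref{lem: injectivity of interior product} that you carry out, and your write-up simply makes the rank--nullity bookkeeping and the absorption of the summed kernel bounds into a single exponential explicit.
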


\begin{proof}
This dimension estimate follows from Lemma \ref{lem: injectivity of interior product} and induction on $p$.
\end{proof}

\begin{proof}[Proof of Theorem \ref{thm: estimate for the dimension of holomorphic forms}]
    The theorem follows from Proposition \ref{prop: control dimension of forms by that of functions} and Corollary \ref{cor: control dimension of holomorphic forms by spectrum}.
\end{proof}

As a separate issue, we also obtain a Liouville type theorem for eigen-$1$-forms of the $f$-Hodge Laplacian on gradient shrinking Ricci solitons, which may be of independent interest. The case for $f$-harmonic $1$-forms when $M$ is closed was known in \cite{Lott2003}.
\begin{prop}\label{prop: vanishing of L2 f-harmonic 1-forms}
Let $\eta$ be a $1$-form on a gradient shrinking Ricci soliton $(M, g, f)$, suppose $\eta \in L^2(dv_f)$ and $\Delta^d_f \eta = \lambda \eta$ with $\lambda < \frac{1}{2}$, then $\eta = 0$.
\end{prop}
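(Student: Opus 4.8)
\emph{Proof proposal.} The plan is to derive a weighted Bochner--Weitzenb\"ock identity showing that $|\eta|^2$ is $f$-subharmonic up to a favorably signed zeroth order term, and then to integrate against the finite measure $dv_f$ using cutoff functions. Start from the classical Bochner formula for a $1$-form: writing $\Delta^d = \n^*\n + Ric$, one has
\[
\tfrac12 \Delta |\eta|^2 = |\n\eta|^2 + Ric(\eta,\eta) - \langle \Delta^d \eta, \eta\rangle .
\]
I would weight this by substituting $\Delta \rightsquigarrow \Delta_f = \Delta - \langle \n f, \n\cdot\rangle$ on the left and $\Delta^d \rightsquigarrow \Delta^d_f = \Delta^d + \mathcal{L}_{\n f}$ on the right. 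Using the identity $\mathcal{L}_{\n f}\eta = \n_{\n f}\eta + \n\n f(\eta^\sharp,\cdot)$ (the expression of the Lie derivative on $1$-forms via the connection), the two cross terms $\langle \n_{\n f}\eta,\eta\rangle$ produced by these substitutions cancel, and the Hessian term combines with $Ric$ to form the Bakry--\'Emery tensor $Ric_f = Ric + \n\n f$. This yields the clean identity
\[
\tfrac12 \Delta_f |\eta|^2 = |\n\eta|^2 + Ric_f(\eta,\eta) - \langle \Delta^d_f\eta,\eta\rangle .
\]
On a gradient shrinking Ricci soliton normalized by $Ric + \n\n f = \tfrac12 g$ we have $Ric_f(\eta,\eta) = \tfrac12 |\eta|^2$, and since $\Delta^d_f\eta = \lambda\eta$,
\[
\tfrac12\Delta_f|\eta|^2 = |\n\eta|^2 + \Bigl(\tfrac12 - \lambda\Bigr)|\eta|^2 .
\]

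Next, let $\phi = \phi_R$ be a cutoff with $\phi_R \equiv 1$ on $B_R(o)$, $\mathrm{supp}\,\phi_R \subset B_{2R}(o)$ and $|\n\phi_R| \le C/R$, where $o$ is a fixed point. Integrating the identity against $\phi_R^2\, dv_f$ and using that $\Delta_f$ is self-adjoint for $dv_f$ (so $\int \phi_R^2 \Delta_f|\eta|^2\, dv_f = -2\int \phi_R \langle \n\phi_R, \n|\eta|^2\rangle\, dv_f$), together with $\bigl|\n|\eta|^2\bigr| \le 2|\eta|\,|\n\eta|$ and Cauchy--Schwarz, one absorbs half of $\int \phi_R^2|\n\eta|^2\, dv_f$ into the left side and obtains
\[
\tfrac12\int \phi_R^2 |\n\eta|^2\, dv_f + \Bigl(\tfrac12 - \lambda\Bigr)\int \phi_R^2 |\eta|^2\, dv_f \;\le\; C\int |\n\phi_R|^2 |\eta|^2\, dv_f \;\le\; \frac{C}{R^2}\int_{B_{2R}\setminus B_R} |\eta|^2\, dv_f .
\]
Since $\eta \in L^2(dv_f)$, the right-hand side tends to $0$ as $R \to \infty$. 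Letting $R\to\infty$ forces $\bigl(\tfrac12 - \lambda\bigr)\int_M |\eta|^2\, dv_f \le 0$, and because $\lambda < \tfrac12$ this gives $\eta \equiv 0$.

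The only genuinely delicate point is the weighted Bochner identity: one must track sign conventions carefully and use correctly the difference between $\mathcal{L}_{\n f}$ and $\n_{\n f}$ on forms, the Hessian correction being exactly what upgrades $Ric$ to $Ric_f$. Everything else is routine: $\eta$ is smooth by elliptic regularity for the eigenform equation, so the cutoff integrals are legitimate, and the limiting step uses only $\int_M |\eta|^2\, dv_f < \infty$. We remark that the threshold $\tfrac12$ is sharp: on the Gaussian soliton $(\R^n, g_E, f = |x|^2/4)$ the parallel $1$-forms $dx^i$ are $L^2(dv_f)$ eigenforms of $\Delta^d_f$ with eigenvalue exactly $\tfrac12$.
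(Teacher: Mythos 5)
Your proof is correct and follows essentially the same route as the paper: the identity $\tfrac12\Delta_f|\eta|^2 = |\n \eta|^2 + \bigl(\tfrac12-\lambda\bigr)|\eta|^2$ is exactly the paper's key computation, and the cutoff integration against the finite measure $dv_f$ is the same device the paper uses. The only (harmless) difference is that you integrate the Bochner identity directly, whereas the paper first passes to the Kato-type inequality $\Delta_f|\eta|\geq(\tfrac12-\lambda)|\eta|$ and a Liouville theorem for nonnegative $L^2(dv_f)$ $f$-subharmonic functions before concluding $\eta=0$; your sharpness remark on the Gaussian soliton is also accurate.
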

\begin{proof}
We first compute the Lie derivative
\[
\begin{split}
\mathcal{L}_{\n f} \eta = & ( i_{\n f} d  + d i_{\n f} )\eta =  \left(  \n_i \eta_j f_i + f_{ij} \eta_i \right) dx^j, 
\end{split}
\]
hence the $f$-Hodge Laplacian of $\eta$ is given by
\begin{equation}\label{eqn: Holdge Laplacian of a 1-form}
\Delta_f^d \eta = (\Delta^d + \mathcal{L}_{\n f}) \eta = - \Delta \eta + Ric_f(\eta) +   \n_i \eta_j  f_i  dx^j.
\end{equation}
Hence
\[
\begin{split}
\Delta_f |\eta|^2 = & 2 \langle \Delta \eta, \eta \rangle + 2 |\n \eta|^2 - \langle \n f, \n |\eta|^2 \rangle  \\
= & -2\lambda |\eta|^2 + 2 Ric_f(\eta, \eta) + 2 \n_i \eta_j f_i \eta_j  + 2 |\n \eta|^2 - 2 f_i \n_i \eta_j \eta_j  \\
= & (1-2\lambda)|\eta|^2 + 2 |\n \eta|^2,
\end{split}
\]
from which we derive
\[
\Delta_f |\eta| \geq (\frac{1}{2} - \lambda)|\eta|. 
\]
Then the result follows from the fact that nonnegative $f$-subharmonic functions in  $L^2(dv_f)$ are constants. Indeed, let $u$ be such a function, let $\phi$ be a cut-off function on a geodesic ball $B(r)$ with radius $r$, such that $|\n \phi| \leq \frac{4}{r} $, then
\[
0 \leq \int \phi^2 u \Delta_f u dv_f = \int - \phi^2 |\n u|^2 - 2 \phi u \langle \n \phi, \n u \rangle \leq \int - \frac{1}{2}\phi^2 |\n u|^2 + 4 |\n \phi|^2 u^2 dv_f,
\]
let $r \to \infty$ yields that $u = constant$.

Now since $|\eta| = constant$ and $\lambda < \frac{1}{2}$,  the differential inequality $\Delta_f |\eta| \geq (\frac{1}{2} -\lambda) |\eta|$ implies that $\eta = 0$. 
\end{proof}
As a consequence, the first eigenvalue of $\Delta_f^d$ on $1$-forms is at least $\frac{1}{2}$ on a gradient Ricci shrinker, and the result in Corollary \ref{cor: control dimension of holomorphic forms by spectrum} for $(1,0)$-forms can be improved.
\begin{cor}\label{cor: control dimension of holomorphic (1,0) forms by spectrum}
Under the same assumptions as in Proposition \ref{series expansion for solutions of the f heat equation of forms}, we have 
\[
\dim_\mathbb{C} \mathcal{O}_\mu (\Lambda^{1,0}) \leq \sharp\{ \frac{1}{2} \leq \lambda(\Delta^d_f) \leq \frac{\mu}{2} + \Lambda , \text{counting multiplicity}\}.
\]
\end{cor}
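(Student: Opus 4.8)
The plan is to deduce this refinement directly from the general spectral bound of Corollary~\ref{cor: control dimension of holomorphic forms by spectrum} together with the Liouville-type vanishing of Proposition~\ref{prop: vanishing of L2 f-harmonic 1-forms}, specialized to $p=1$. First I would apply Corollary~\ref{cor: control dimension of holomorphic forms by spectrum} with $p=1$ (the hypotheses of Proposition~\ref{series expansion for solutions of the f heat equation of forms} are in force, with $\Lambda = \sup_M |Ric| + \frac{1}{2}$), which gives
\[
\dim_\mathbb{C}\mathcal{O}_\mu(\Lambda^{1,0}) \le \sharp\left\{ 0 \le \lambda(\Delta_f^d) \le \frac{\mu}{2} + \Lambda ,\ \text{counting multiplicity}\right\}.
\]
So it remains only to show that $\Delta_f^d$ has no eigenvalue on $1$-forms in the interval $[0, \frac{1}{2})$; then the count over $[0, \frac{\mu}{2}+\Lambda]$ coincides with the count over $[\frac{1}{2}, \frac{\mu}{2}+\Lambda]$ and the conclusion follows.

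For the absence of small eigenvalues I would argue as follows. By Lemma~\ref{lem: spectrum of f-Hodge Laplacian}, $\Delta_f^d$ on $(1,0)$-forms has discrete spectrum realized by eigenforms lying in $L^2(dv_f)$. Let $\theta \in L^2(dv_f)$ be a $(1,0)$-eigenform with $\Delta_f^d\theta = \lambda\theta$. Since $\Delta_f^d$ is self-adjoint with respect to $dv_f$, the eigenvalue $\lambda$ is real; writing $\theta = \alpha + \sqrt{-1}\,\beta$ with $\alpha,\beta$ real $1$-forms in $L^2(dv_f)$, both $\alpha$ and $\beta$ satisfy $\Delta_f^d\alpha = \lambda\alpha$ and $\Delta_f^d\beta = \lambda\beta$. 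If $\lambda < \frac{1}{2}$, Proposition~\ref{prop: vanishing of L2 f-harmonic 1-forms} forces $\alpha = \beta = 0$, hence $\theta = 0$. (Alternatively, one may run the Bochner computation of Proposition~\ref{prop: vanishing of L2 f-harmonic 1-forms} verbatim using the Hermitian norm $|\theta|^2$, arriving at $\Delta_f|\theta| \ge (\frac{1}{2}-\lambda)|\theta|$ and then at $\theta \equiv 0$ via the same $f$-subharmonic maximum principle.) Therefore every eigenvalue of $\Delta_f^d$ on $1$-forms is at least $\frac{1}{2}$, which is exactly what is needed.

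The argument requires no new estimate; the only points needing care are bookkeeping: that Proposition~\ref{prop: vanishing of L2 f-harmonic 1-forms}, stated for real $1$-forms, transfers to complex $(1,0)$-forms (it does, because the eigenvalue is real, so the real and imaginary parts are themselves real eigen-$1$-forms in $L^2(dv_f)$), and that the eigenforms produced by Lemma~\ref{lem: spectrum of f-Hodge Laplacian} genuinely lie in $L^2(dv_f)$ so that Proposition~\ref{prop: vanishing of L2 f-harmonic 1-forms} applies — both of which are already established earlier. I do not expect any substantive obstacle; the corollary is a clean combination of the two prior results, and the only mild subtlety is the real/complex reduction just noted.
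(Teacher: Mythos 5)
Your proposal is correct and follows exactly the paper's route: the paper obtains this corollary by combining the spectral count of Corollary~\ref{cor: control dimension of holomorphic forms by spectrum} (with $p=1$) with Proposition~\ref{prop: vanishing of L2 f-harmonic 1-forms}, which excludes eigenvalues of $\Delta_f^d$ on $1$-forms below $\frac{1}{2}$. Your extra bookkeeping step (reality of the eigenvalue, so the real and imaginary parts of a $(1,0)$-eigenform are real $L^2(dv_f)$ eigen-$1$-forms to which the proposition applies) is a detail the paper leaves implicit, and it is handled correctly.
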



\end{document}